\numberwithin{equation}{section}
\theoremstyle{plain}
\newtheorem{thm}{Theorem}[section]
 \newtheorem{lemma}[thm]{Lemma}
\newtheorem{prop}[thm]{Proposition}
\theoremstyle{definition}
\newtheorem{remark}[equation]{Remark}
\newcommand{\dlabel}[1]{\ifmmode \text{\ttfamily \upshape [#1] } \else
{\ttfamily \upshape [#1] }\fi \label{#1}}
\newcommand{\Ho}{\operatorname{H} }
\newcommand{\M}{\operatorname{M} }
\newcommand{\Z}{\operatorname{Z} }
\newcommand{\im}{\operatorname{Im} }
\newcommand{\gen}[1]{\left < #1 \right >}
\newcommand{\res}{\operatorname{res} }
\newcommand{\Hom}{\operatorname{Hom} }
\newcommand{\Ker}{\operatorname{Ker} }
\begin{document}

\setlength{\baselineskip}{15pt}

\title{THE SCHUR MULTIPLIERS OF  $p$-GROUPS OF ORDER $p^5$}

\author{Sumana Hatui}
\address{School of Mathematics, Harish-Chandra Research Institute, Chhatnag Road, Jhunsi, Allahabad 211019, INDIA}
\address{\& Homi Bhabha National Institute, Training School Complex, Anushakti Nagar, Mumbai 400085, India}
 \email{sumanahatui@hri.res.in, sumana.iitg@gmail.com}

\author{Vipul Kakkar}
\address{Department of Mathematics, Central University of Rajasthan,  NH-8, Bandar Sindri, Ajmer-305817, Rajasthan, INDIA}
 \email{vplkakkar@gmail.com}
 
 \author{Manoj K. Yadav}
\address{Harish-Chandra Research Institute, HBNI, Chhatnag Road, Jhunsi, Allahabad 211019, INDIA}
 \email{myadav@hri.res.in}
 
\subjclass[2010]{20D15, 20J06}
\keywords{finite $p$-group, Schur multiplier, non-abelian tensor square, exterior square}

\begin{abstract}
In this article, we compute the Schur multiplier, non-abelian tensor square and exterior square of non-abelian $p$-groups of order $p^5$. As an application we determine the capability of groups of order $p^5$.
\end{abstract}

\maketitle 

\section{Introduction}
The Schur multiplier $\M(G)$ of a group $G$ was invented by I. Schur in \cite{SJ1} and \cite{SJ}  as an obstruction for  a projective representation to become linear representation, and is  defined as second integral homology group $\Ho_2(G,\mathbb{Z})$, where $\mathbb{Z}$ is a trivial $G$-module. It is well known that for a finite group $G$, the group $\M(G)$ is isomorphic to the second cohomology group $\Ho^2(G,\mathbb{C}^{\star})$, where $\mathbb{C}^{\star}$ is a trivial $G$-module. 

The  \emph{non-abelian tensor product} $G \otimes H$ of two groups $G$ and $H$,  acting on each other and satisfying certain compatibility conditions,  was defined by Brown and Loday \cite{BL} as a generalization of abelian tensor product. In particular, when a group $G$ acts on itself by conjugation then $G \otimes G$ is called non-abelian tensor square which is defined as follows. Let $G$ acts on itself by conjugation, i.e., $h^g=g^{-1}hg$ for all $h, g \in G$.
Then the non-abelian tensor square $G \otimes G$ of  $G$ is the group generated by the symbols $g \otimes h$ for all $g,h \in G$, subject to the relations
$$gg' \otimes h = (g^{g'} \otimes h^{g'})(g' \otimes h)$$ and $$g \otimes hh' = (g \otimes h')({g}^{h'} \otimes h^{h'})$$
for all $g,g',h,h' \in G$. The
 \emph{non-abelian exterior square} of $G$, denoted by $G \wedge G$, is the quotient group  of $G\otimes G$ by $\nabla(G)$, where $\nabla(G)$ is the normal subgroup generated by the elements $g \otimes g$ for all $g \in G$. It follows from the definition that the map  $f: G\otimes G \rightarrow G'$, defined on the generators by $f(g \otimes h)=[g,h]$, is an epimorphism,  where $[g,h]=g^{-1}h^{-1}gh$. The epimorphism $f$ then induce an epimorphism $f' : G \wedge G \rightarrow G'$. The kernel of $f'$ is isomorphic to the Schur multiplier $\M(G)$ of $G$ \cite{BL}. 
 
We now present a different description of $G \otimes G$, introduced in \cite{RO}, which, sometimes, comes more handy for evaluating tensor square of a group $G$.
By $G^{\phi}$ we denote  the isomorphic image of a group $G$ via the isomorphism $\phi$. Consider the group
$$\nu(G):=\gen{G,G^{\phi} \mid \Re, \Re ^{\phi}, [g_1,g_2^{\phi}]^g=[g_1^g, (g_2^g)^{\phi}]=[g_1,g_2^\phi]^{g^\phi} \;\text{for all}\;  g,g_1,g_2 \in G}$$
where $\Re, {\Re}^\phi$ are the defining relations of $G$ and $G^{\phi}$ respectively.   Recall that the commutator subgroup of $G$ and $G^\phi$   in $\nu(G)$ is defined as $[G, G^\phi] = \gen{[g, h^\phi] \mid g, h \in G}$. Then the map $\Phi: G \otimes G \rightarrow [G,G^\phi]$, defined by 
$$\Phi(g  \otimes h)=[g,h^\phi],\;\; g,h \in G$$ 
is an isomorphism \cite{RO}. Let $\pi : [G,G^\phi] \to [G, G^\phi]/\gen{[g,g^\phi] \mid g \in G}$ be the natural projection. Then we have an isomorphism between $G \wedge G$ and  $[G, G^\phi]/\gen{[g,g^\phi] \mid g \in G}$. 

This article is devoted to computing the  Schur multiplier, non-abelian tensor square and non-abelian exterior square of non-abelian $p$-groups of order $p^5$.  We sometimes compute the Schur multiplier of a group and use this to compute the non-abelian tensor square, and sometimes the other way round. The technique of the proof highly depends on the given group, and therefore the article uses a blend of almost all known techniques. As an application we categorise capable and non-capable groups of order $p^5$. A group $G$ is said to be  \emph{capable}  if there exists a group $H$ such that $G \cong H/\Z(H)$, where $\Z(H)$ denotes the center of $H$.

There are $10$ isoclinism classes $\Phi_i, 1 \leq i \leq 10$, of groups of order $p^5$ as per the classification by R. James \cite{RJ}. The groups in $\Phi_1$ are abelian groups.
We recall some notations from \cite{RJ}. For an element $\alpha_{i+1}$ of a finite $p$-group $G$, by
$\alpha_{i+1}^{\left(p\right)}$, we mean  $\alpha_{i+1}^p \alpha_{i+2}^{p \choose 2} \cdots \alpha_{i+k}^{p \choose k} \cdots \alpha_{i+p}$, where $\alpha_{i+2},...,\alpha_{i+p}$ are suitably defined elements of $G$. Observe that, for the groups under consideration in this article, $\alpha_i^{(p)}=\alpha_i^p$ for $p >3$.
By $\nu$ we denote   the smallest positive integer which is a non-quadratic residue (mod $p$), and by $\zeta$ we  denote the smallest positive integer which is a primitive root (mod $p$). Relations of the form $[\alpha, \beta]= 1$ for generators $\alpha$ and  $\beta$ are omitted in the presentations of the groups. 

The layout of the article is as follows. In Section 2, we present preliminary results including the Schur multiplier and tensor square of groups of order $p^3$ and $p^4$, $p \ge 5$. In the following sections the Schur multiplier, exterior square and tensor square of groups $G$ of order $p^5$, $p \ge 5$ are computed. Section 3 deals with special $p$-groups of order $p^5$. In Section 4, we consider groups of maximal class. In Section 5, all remaining groups of order $p^5$ are dealt with, and  for $p \ge 5$ our main results are presented in Table \ref{Table1}. The results on the  capability of these groups are presented in Table \ref{Table2}.
In the last section we consider the groups of order $2^5$ and $3^5$, where the results  are given in  Table \ref{Table3}, Table \ref{Table4}, using HAP \cite{HAP} of GAP \cite{GAP}.
 Since our proofs heavily depend on the presentations of the groups under consideration, we include the presentations of groups of order $p^n$, $3 \le n \le 5$, from \cite{RJ} in Annexure A, after the bibliography. The set of  identities obtained by computation is usually sequential, i.e., the first might have been used for obtaining the second, and the first and second for the third, and so on. 

The presentation of the article is a bit unusual. We only prove lemmas, in each of which we compute required information for specific classes of groups sharing the technique of proof. Our main result is a combination of all the lemmas.

We conclude this section with setting some notations. The commutator and Frattini subroup of a finite group $G$ are denoted by  $G'$ and $\Phi(G)$ respectively. By $d(G)$ we denote the cardinality of a minimal generating set of a finitely generated group $G$. 
By $\mathbb{Z}_p^{(k)}$ we denote $\mathbb{Z}_p \times \mathbb{Z}_p \times \cdots \times \mathbb{Z}_p$($k$ times). For a group $G$, $\gamma_i(G)$ denotes the $i$-th term of the lower central series of  $G$ and $G^{ab}$ denotes the quotient group $G/\gamma_2(G)$. Notice that  $\gamma_2(G) = G'$.  The \emph{epicenter}, denoted by  $Z^*(G)$, of a group $G$ is defined to be the smallest central subgroup $K$ of $G$ such that $G/K$ is capable.


\section{Preliminaries}
In this section we list some  known results which we'll  use in the upcoming sections.
\begin{thm}[\cite{SJ}]\label{D}
For two groups $H$ and $K$,
$$\M(H \times K) \cong \M(H) \times \M(K) \times (H/H' \otimes K/K').$$
\end{thm}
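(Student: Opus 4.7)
The plan is to derive this from the Künneth formula in group homology, using the identification $\M(G) \cong \Ho_2(G, \mathbb{Z})$ with $\mathbb{Z}$ a trivial module. The Künneth theorem, applied to the external product with integer coefficients, yields
$$\Ho_n(H \times K, \mathbb{Z}) \cong \bigoplus_{i+j=n} \Ho_i(H, \mathbb{Z}) \otimes \Ho_j(K, \mathbb{Z}) \;\oplus\; \bigoplus_{i+j=n-1} \operatorname{Tor}_1^{\mathbb{Z}}\bigl(\Ho_i(H, \mathbb{Z}), \Ho_j(K, \mathbb{Z})\bigr).$$

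Specializing to $n = 2$, I would write out the summands explicitly: the tensor part runs over $(i,j) \in \{(2,0),(1,1),(0,2)\}$ and the Tor part over $(i,j) \in \{(1,0),(0,1)\}$. Using $\Ho_0(G, \mathbb{Z}) \cong \mathbb{Z}$ (which is $\mathbb{Z}$-free, so Tor terms against it vanish and tensoring with $\mathbb{Z}$ is the identity), together with the Hurewicz-type identification $\Ho_1(G, \mathbb{Z}) \cong G/G'$, the whole expression collapses to
$$\M(H) \oplus \M(K) \oplus (H/H' \otimes K/K'),$$
which is the stated isomorphism (writing the direct sum of abelian groups as a direct product).

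A more elementary alternative is to apply Hopf's formula $\M(G) \cong (R \cap [F,F])/[F,R]$ to a presentation of $H \times K$ obtained by gluing presentations $H = F_1/R_1$ and $K = F_2/R_2$ along the mixed commutators $[F_1, F_2]$; this reduces the theorem to an explicit commutator computation in $F_1 * F_2$. The main obstacle, in either route, is to account for the cross-term $H/H' \otimes K/K'$: in the Künneth approach it appears automatically as the $(i,j) = (1,1)$ contribution, while in the Hopf-formula approach one must isolate the subgroup of $[F_1 * F_2, F_1 * F_2]$ generated by the mixed commutators $[f_1, f_2]$ (with $f_i$ lifting to the $i$-th factor) and identify its image in the Hopf quotient canonically with $H/H' \otimes K/K'$. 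Everything else in the calculation is bookkeeping that matches the two $(i,j) = (2,0)$ and $(0,2)$ summands with $\M(H)$ and $\M(K)$ respectively.
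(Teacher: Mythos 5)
Your Künneth argument is correct and complete: the only summands of $\Ho_2(H\times K,\mathbb{Z})$ that survive are $\Ho_2(H)\otimes\Ho_0(K)\cong\M(H)$, $\Ho_0(H)\otimes\Ho_2(K)\cong\M(K)$, and $\Ho_1(H)\otimes\Ho_1(K)\cong H/H'\otimes K/K'$, while both Tor terms vanish because $\Ho_0\cong\mathbb{Z}$ is free. There is nothing to compare against here, because the paper does not prove this statement at all: it is quoted as a classical result of Schur (the cited 1907 paper), listed among the preliminaries. Your homological route is the standard modern proof and, unlike Schur's original cohomological treatment of finite groups via $\Ho^2(G,\mathbb{C}^{\star})$, it works uniformly for arbitrary groups, which matches the generality of the statement as the paper phrases it. The alternative sketch via Hopf's formula is also viable, but as you note it requires the nontrivial step of identifying the image of the mixed commutators with $H/H'\otimes K/K'$; since the Künneth route already closes the argument, that sketch can be regarded as optional.
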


\begin{thm}[\cite{KT}]\label{1}
If a group $G$ is a semidirect product of a normal subgroup $N$ and a subgroup $T$, and $M$ is a $G$-module with trivial $G$-action. Then the following sequence is exact
$$1 \rightarrow \Ho^1\big(T,\Hom(N,M)\big) \rightarrow \Ho^2(G,M)_2 \rightarrow  \Ho^2(N,M)^T \rightarrow \Ho^2\big(T,\Hom(N,M)\big),$$
where 
$$\Ho^2(G,M)_2=\Ker\big(\res^G_T:\Ho^2(G,M) \rightarrow \Ho^2(T,M)\big)$$
 and $\Ho^2(N,M)^T$ is $T$- stable subgroup of $\Ho^2(N,M)$.
\end{thm}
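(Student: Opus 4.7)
The plan is to derive the four-term exact sequence from the Lyndon--Hochschild--Serre spectral sequence
$$E_2^{p,q} = \Ho^p\bigl(T, \Ho^q(N, M)\bigr) \Longrightarrow \Ho^{p+q}(G, M)$$
associated with $1 \to N \to G \to T \to 1$, exploiting crucially the fact that the extension splits. Since $M$ is trivial as a $G$-module, $T$ acts on $\Ho^q(N, M)$ through its conjugation action on $N$, and the three entries on the total-degree-$2$ antidiagonal are $E_2^{0,2} = \Ho^2(N, M)^T$, $E_2^{1,1} = \Ho^1(T, \Hom(N, M))$, and $E_2^{2,0} = \Ho^2(T, M)$.

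The section $s : T \to G$ yields a retraction $s^{*} : \Ho^n(G, M) \to \Ho^n(T, M)$ of the inflation map, so inflation is split injective in every degree. Under the identification of the bottom edge homomorphism $\Ho^n(T, M) = E_2^{n,0} \twoheadrightarrow E_\infty^{n,0} \hookrightarrow \Ho^n(G, M)$ with inflation, this split injectivity forces every differential landing in $E_r^{n,0}$ to vanish for $r \ge 2$. In our range this kills $d_2 : E_2^{1,1} \to E_2^{3,0}$ and $d_3 : E_3^{0,2} \to E_3^{3,0}$, while for degree reasons the only surviving outgoing differential from $E_r^{0,2}$ or $E_r^{1,1}$ is $d_2 : E_2^{0,2} \to E_2^{2,1} = \Ho^2(T, \Hom(N, M))$. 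Consequently $E_\infty^{1,1} = \Ho^1(T, \Hom(N, M))$ and $E_\infty^{0,2} = \ker\bigl(d_2 : \Ho^2(N, M)^T \to \Ho^2(T, \Hom(N, M))\bigr)$.

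Next I would identify $\Ho^2(G, M)_2$ with the spectral-sequence quotient $\Ho^2(G, M)/F^{2}\Ho^2(G, M)$: the image of inflation is exactly $F^{2}\Ho^2(G, M) = E_\infty^{2,0} = \Ho^2(T, M)$, and since inflation is a section of $\res^{G}_{T}$ we get an internal direct sum $\Ho^2(G, M) = \ker(\res^{G}_{T}) \oplus F^{2}\Ho^2(G, M)$ of abelian groups, whence $\Ho^2(G, M)_2 \cong \Ho^2(G, M)/F^{2}\Ho^2(G, M)$. The residual filtration then produces the short exact sequence
$$0 \to E_\infty^{1,1} \to \Ho^2(G, M)_2 \to E_\infty^{0,2} \to 0,$$
and replacing $E_\infty^{0,2}$ by the kernel of $d_2$ yields exactly the claimed four-term sequence.

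The main obstacle is the edge-map bookkeeping --- recognizing inflation as the bottom edge, deducing from its split injectivity that the incoming differentials into $E_r^{n,0}$ must all vanish, and confirming the identification $\Ho^2(G, M)_2 = \Ho^2(G, M)/F^{2}\Ho^2(G, M)$ --- all of which rest on naturality of the LHS spectral sequence with respect to the section $s$. A cocycle-level alternative is available (write each $g \in G$ uniquely as $g = nt$ and decompose a normalized $2$-cocycle $f : G \times G \to M$ into components indexed by the three antidiagonal positions), but it is noticeably more laborious and offers no conceptual advantage over the spectral-sequence argument.
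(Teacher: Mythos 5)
Your spectral-sequence derivation is correct, but there is nothing in the paper to compare it against: Theorem 2.2 is quoted from Tahara's paper \cite{KT} as a preliminary and is not proved here. All the steps of your argument check out. The section $s:T\to G$ makes inflation split injective in every degree, which forces every differential landing in the bottom row to vanish, whence $E_\infty^{1,1}=E_2^{1,1}=\Ho^1\bigl(T,\Hom(N,M)\bigr)$ and $E_\infty^{0,2}=\ker\bigl(d_2:\Ho^2(N,M)^T\to\Ho^2(T,\Hom(N,M))\bigr)$; the identification $\Ho^2(G,M)_2\cong \Ho^2(G,M)/F^2\Ho^2(G,M)$ follows from the decomposition $\Ho^2(G,M)=\Ker(\res^G_T)\oplus \im(\operatorname{inf})$ together with $\im(\operatorname{inf})=F^2\Ho^2(G,M)$. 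The one bookkeeping point worth writing out explicitly is that the induced map $\Ker(\res^G_T)\to F^0/F^1=E_\infty^{0,2}$ is surjective with kernel isomorphic to $F^1/F^2$; this holds because $F^2\subseteq F^1$, so that $F^1=(F^1\cap\Ker(\res^G_T))\oplus F^2$. By contrast, Tahara's original derivation in \cite{KT} proceeds at the level of normalized cocycles on $G=N\rtimes T$, decomposing a factor set according to the unique factorization $g=nt$ and analysing the resulting compatibility conditions; that route is more laborious but makes the maps in the sequence explicit and yields finer information about $\Ho^2(G,M)$. For the way the theorem is used in this paper only exactness and the identification of the outer terms matter, so your shorter argument is entirely adequate.
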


\begin{thm}[See Theorem 3.1 of \cite{MRRR}] \label{2}
Let $G$ be a finite group and $K$ any normal subgroup of $G$. Set $H=G/K$. Then 
\begin{enumerate}[label=(\roman*)]
\item  $|\M(H)|$ divides $|\M(G)||G' \cap K|$,
\item  $d\big(\M(H)\big) \leq d\big(\M(G)\big)+d(G' \cap K)$.
\end{enumerate}
\end{thm}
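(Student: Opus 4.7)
The plan is to derive both statements from the standard five-term exact sequence in low-dimensional homology (equivalently the Stallings--Stammbach sequence) applied to the extension $1 \to K \to G \to H \to 1$. With trivial coefficients $\mathbb{Z}$, this sequence reads
\begin{equation*}
\M(G) \longrightarrow \M(H) \longrightarrow K/[G,K] \longrightarrow G^{ab} \longrightarrow H^{ab} \longrightarrow 0.
\end{equation*}
Identifying the kernel of the map $K/[G,K] \to G^{ab}$ as $(G' \cap K)/[G,K]$, the exact sequence truncates to
\begin{equation*}
\M(G) \longrightarrow \M(H) \longrightarrow (G' \cap K)/[G,K] \longrightarrow 0.
\end{equation*}

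From here both conclusions follow by elementary diagram chasing. If $N$ denotes the image of $\M(G)$ in $\M(H)$, then $\M(H)/N \cong (G' \cap K)/[G,K]$. For (i), I would use multiplicativity of orders in short exact sequences: $|\M(H)| = |N|\cdot|(G'\cap K)/[G,K]|$, and since $|N|$ divides $|\M(G)|$ and $|(G'\cap K)/[G,K]|$ divides $|G' \cap K|$, the product $|\M(G)|\cdot|G' \cap K|$ is divisible by $|\M(H)|$. For (ii), I would note that any abelian group $B$ with subgroup $N$ satisfies $d(B) \le d(N) + d(B/N)$, apply this to $B = \M(H)$, and bound $d(N) \le d(\M(G))$ and $d((G'\cap K)/[G,K]) \le d(G'\cap K)$.

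As an alternative, one can give a direct Hopf-formula proof: pick a free presentation $G = F/R$, let $S \trianglelefteq F$ be the preimage of $K$, and compute
\begin{equation*}
\frac{|\M(G)|\cdot|G'\cap K|}{|\M(H)|} = \frac{|[F,F]\cap R|}{|[F,R]|}\cdot\frac{|[F,F]\cap S|}{|[F,F]\cap R|}\cdot\frac{|[F,S]|}{|[F,F]\cap S|} = \frac{|[F,S]|}{|[F,R]|},
\end{equation*}
which is an integer because $[F,R] \le [F,S]$. The analogous generator-count argument works for (ii).

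The only genuine obstacle is the identification of the kernel in the five-term sequence with $(G'\cap K)/[G,K]$, and this is immediate: the map $K/[G,K] \to G/G'$ is induced by inclusion, so its kernel consists precisely of the images of elements of $K$ that lie in $G'$. Everything else is bookkeeping with orders and generator counts, so no serious computational or conceptual hurdle arises.
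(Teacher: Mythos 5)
Your proof is correct, but there is nothing in the paper to compare it against: the paper states this result purely as a citation (Theorem 3.1 of Jones, \cite{MRRR}) and gives no proof. Your five-term-sequence argument is the standard modern derivation of Jones's inequalities and is complete. The identification of the kernel of $K/[G,K] \to G^{ab}$ with $(G'\cap K)/[G,K]$ is right (note $[G,K] \le G'\cap K$, so the quotient makes sense), the truncation to $\M(G)\to\M(H)\to (G'\cap K)/[G,K]\to 0$ then gives $\M(H)/N \cong (G'\cap K)/[G,K]$ with $N$ the image of $\M(G)$, and both conclusions follow because $N$ is an epimorphic image of $\M(G)$ and $(G'\cap K)/[G,K]$ is an epimorphic image of $G'\cap K$ (so its order divides and its generator number is bounded by those of $G'\cap K$, the latter holding even though $G'\cap K$ need not be abelian). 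The only cosmetic issue is in your alternative Hopf-formula computation: the symbols $|[F,R]|$, $|[F,F]\cap R|$, etc.\ are not orders of finite groups (they are infinite subgroups of the free group $F$), so the telescoping product should be written as a product of subgroup indices, $[\,[F,F]\cap R : [F,R]\,]\cdot[\,[F,F]\cap S : [F,F]\cap R\,]\cdot[\,[F,F]\cap S : [F,S]\,]^{-1} = [\,[F,S]:[F,R]\,]$; all these indices are finite because the first two equal $|\M(G)|$ and $|G'\cap K|$, so the conclusion stands. Either route is a legitimate self-contained proof of the quoted theorem.
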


\begin{thm}[See Theorem 4.1 of \cite{MRRR}]\label{J}
Let $G$ be a finite group and $K$ a central subgroup of $G$. Set $A = G/K$. Then
$|\M(G)||G'\cap K|$ divides $|\M(A)| |\M(K)| |A^{ab} \otimes K|$.
\end{thm}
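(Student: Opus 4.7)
The plan is to apply the Lyndon--Hochschild--Serre spectral sequence to the central extension $1 \to K \to G \to A \to 1$ and read the bound off the filtration on $\M(G) = \Ho_2(G, \mathbb{Z})$. Since $K$ is central in $G$, the conjugation action of $A = G/K$ on $K$ is trivial, so each $\Ho_q(K)$ becomes a trivial $A$-module and the $E^2$-page
$$E^2_{p,q} = \Ho_p\bigl(A, \Ho_q(K)\bigr) \Longrightarrow \Ho_{p+q}(G)$$
has trivial coefficient action.

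The first step is to identify the three entries on the total-degree-$2$ line. Two are immediate: $E^2_{2,0} = \M(A)$ and $E^2_{0,2} = \M(K)$. For the middle cell I would appeal to the universal coefficient theorem for group homology: since $\Ho_0(A) = \mathbb{Z}$ is torsion-free, the Tor term vanishes and $E^2_{1,1} = A^{ab} \otimes K$.

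The second step is to pin down $|E^\infty_{2,0}|$ exactly, since this is where the factor $|G' \cap K|$ will enter. Inspecting the differentials $d^r \colon E^r_{p,q} \to E^r_{p-r, q+r-1}$, one sees that for $r \ge 3$ no incoming or outgoing arrow can reach $E^r_{2,0}$, so $E^\infty_{2,0} = \Ker\bigl(d^2_{2,0} \colon \M(A) \to E^2_{0,1} = K\bigr)$. I would then invoke the Stallings--Stammbach five-term exact sequence --- which is exactly the bottom edge of this spectral sequence --- to identify the image of $d^2_{2,0}$ with the kernel of $K \to G^{ab}$, namely $G' \cap K$ (using $[G,K] = 1$). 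This gives $|E^\infty_{2,0}| = |\M(A)|/|G' \cap K|$.

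To finish, convergence of the spectral sequence gives $|\M(G)| = |E^\infty_{0,2}| \cdot |E^\infty_{1,1}| \cdot |E^\infty_{2,0}|$, and since $E^\infty_{0,2}$ and $E^\infty_{1,1}$ are subquotients of $\M(K)$ and $A^{ab} \otimes K$ respectively, their orders divide $|\M(K)|$ and $|A^{ab} \otimes K|$. Multiplying by $|G' \cap K|$ then yields $|\M(G)| \cdot |G' \cap K| = |\M(A)| \cdot |E^\infty_{1,1}| \cdot |E^\infty_{0,2}|$, which divides $|\M(A)| \cdot |\M(K)| \cdot |A^{ab} \otimes K|$, as claimed. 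The main (mild) obstacle is the bookkeeping with higher differentials to confirm that nothing disturbs $E^\infty_{2,0}$ and that $\im d^2_{2,0}$ is correctly read off the five-term sequence; everything else is formal from convergence.
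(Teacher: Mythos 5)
The paper does not prove this statement: it is quoted as Theorem 4.1 of Jones \cite{MRRR} and used as a black box, so there is no in-paper proof to compare against. Your Lyndon--Hochschild--Serre argument is correct and complete --- the identification of the three $E^2$-terms $\M(A)$, $A^{ab}\otimes K$, $\M(K)$, the observation that no differential other than $d^2_{2,0}\colon \M(A)\to K$ touches the corner term, and the identification of $\operatorname{Im} d^2_{2,0}$ with $\Ker(K\to G^{ab})=G'\cap K$ via the five-term sequence are all right, and the counting step from convergence is valid --- and this is essentially the classical proof of the inequality in Jones's original paper.
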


\begin{thm}[See Theorem 3.1 of \cite{MRR}]\label{J'}
Let  $G$  be  a  finite group and $N$ any normal subgroup such that $G/N$ is  cyclic. Then
\begin{enumerate}[label=(\roman*)]
\item $|\M(G)|$ divides $|\M(N)||N/N'|$,
\item  $d\big(\M(G)\big) \leq d\big(\M(N)\big) + d(N/N')$.
\end{enumerate}
\end{thm}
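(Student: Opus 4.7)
The plan is to exploit the Lyndon-Hochschild-Serre spectral sequence in homology for the extension $1 \to N \to G \to Q \to 1$, where $Q := G/N$ is cyclic. Its second page is $E^2_{p,q} = \Ho_p(Q, \Ho_q(N, \mathbb{Z}))$, converging to $\Ho_{p+q}(G, \mathbb{Z})$. Because $Q$ is cyclic, $\M(Q) = \Ho_2(Q, \mathbb{Z}) = 0$, so $E^2_{2,0} = 0$ and hence $E^\infty_{2,0} = 0$. The convergence filtration of $\M(G) = \Ho_2(G, \mathbb{Z})$ therefore collapses to a short exact sequence
$$0 \to E^\infty_{0,2} \to \M(G) \to E^\infty_{1,1} \to 0,$$
expressing $\M(G)$ as an extension of two controllable pieces.

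Next I would bound each piece separately. The term $E^\infty_{0,2}$ is a subquotient of $E^2_{0,2} = \Ho_0(Q, \M(N)) = \M(N)_Q$, which is itself a quotient of $\M(N)$; hence $|E^\infty_{0,2}|$ divides $|\M(N)|$ and $d(E^\infty_{0,2}) \leq d(\M(N))$. For $E^\infty_{1,1}$, I would invoke the standard periodic resolution for $Q = \gen{\sigma}$ of order $n$ to identify
$$E^2_{1,1} = \Ho_1(Q, N^{ab}) \cong (N^{ab})^Q / T \cdot N^{ab},$$
where $T = 1 + \sigma + \cdots + \sigma^{n-1}$ is the norm element. This exhibits $E^2_{1,1}$ as a quotient of the subgroup $(N^{ab})^Q \leq N^{ab}$, so $|E^2_{1,1}|$ divides $|N/N'|$ and $d(E^2_{1,1}) \leq d(N/N')$; both bounds descend to the subquotient $E^\infty_{1,1}$.

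Assembling these estimates through the short exact sequence yields $|\M(G)| = |E^\infty_{0,2}| \cdot |E^\infty_{1,1}|$ dividing $|\M(N)| \cdot |N/N'|$, which settles (i); and $d(\M(G)) \leq d(E^\infty_{0,2}) + d(E^\infty_{1,1}) \leq d(\M(N)) + d(N/N')$, which settles (ii). The main technical point is confirming that the minimal number of generators of a finite abelian group does not increase under passage to subgroups or quotients; this is immediate from the invariant factor decomposition, applied Sylow-by-Sylow to reduce to the $p$-group case. An alternative route, avoiding the full spectral sequence, would use Hopf's formula together with a presentation of $G$ obtained by adjoining one generator to a presentation of $N$, but bookkeeping the resulting commutator identities looks more cumbersome than the spectral-sequence argument above.
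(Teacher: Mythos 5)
The paper does not prove this statement at all: it is imported verbatim as Theorem 3.1 of Jones's paper [MRR] and used as a black box, so there is no in-paper argument to compare against. Your proof is correct and is essentially the standard (and, in substance, Jones's original) argument: the whole content is that $\M(Q)=\Ho_2(Q,\mathbb{Z})=0$ for $Q$ cyclic kills the $E^{\infty}_{2,0}$ layer, leaving the two-step filtration $0 \to E^{\infty}_{0,2} \to \M(G) \to E^{\infty}_{1,1} \to 0$, with $E^{\infty}_{0,2}$ a quotient of $\M(N)_Q$ and $E^{\infty}_{1,1}$ a subquotient of $\Ho_1(Q,N^{ab})\cong (N^{ab})^Q/T\cdot N^{ab}$, which sits inside $N/N'$ up to quotient. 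The auxiliary facts you rely on are all genuine for finite abelian groups: $|X|$ divides $|Y|$ for a subquotient $X$ of $Y$; $d(X)\le d(Y)$ for subgroups and quotients (via $d(A)=\max_p \dim_{\mathbb{F}_p}A[p]$); and $d(B)\le d(A)+d(C)$ for an extension $0\to A\to B\to C\to 0$ by lifting generators. Jones phrases the same computation through the cohomological Lyndon--Hochschild--Serre sequence with coefficients in $\mathbb{C}^{*}$ (so $\Ho^1\big(Q,\Hom(N,\mathbb{C}^{*})\big)$ plays the role of your $\Ho_1(Q,N^{ab})$, cf. Theorem \ref{1} in this paper), but that is only a dualization, not a different idea. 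No gaps.
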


\begin{thm}[\cite{GT}]\label{G}
Let $Z$ be a central subgroup of a finite group $G$. Then the following sequence is exact 
$$ G/G' \otimes Z \xrightarrow{\lambda} \M(G) \xrightarrow{\mu} \M(G/Z) \rightarrow G'\cap Z \rightarrow 1,$$
where $\lambda$ is the Ganea map.
\end{thm}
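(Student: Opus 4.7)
The plan is to exploit Hopf's formula. Fix a free presentation $1 \to R \to F \to G \to 1$ and let $S \leq F$ be the preimage of $Z$ in $F$, so that $S/R \cong Z$, $F/S \cong G/Z$, and
$$\M(G) \cong (R \cap F')/[R, F], \qquad \M(G/Z) \cong (S \cap F')/[S, F].$$
Centrality of $Z$ in $G$ translates to $[S, F] \subseteq R$; hence $[S, F] \subseteq R \cap F'$, and iterating, $[[S, F], F] \subseteq [R, F]$. These two inclusions are the main structural inputs I would rely on.

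For the right half of the sequence, I would define $\mu$ as induced by the inclusions $R \cap F' \hookrightarrow S \cap F'$ and $[R, F] \hookrightarrow [S, F]$, and define a connecting map $\delta : \M(G/Z) \to Z$ by $x[S, F] \mapsto xR$ (well-defined because $[S, F] \subseteq R$). A direct chase gives $\Ker(\mu) = [S, F]/[R, F]$ and $\im(\mu) = (R \cap F')/[S, F] = \Ker(\delta)$, yielding exactness at $\M(G/Z)$. Since any $g \in G' \cap Z$ lifts to an element of $S \cap F'$, the image of $\delta$ corresponds under $S/R \cong Z$ to exactly $G' \cap Z$, which is the surjectivity asserted by the rightmost arrow.

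For exactness at $\M(G)$, I would define the Ganea map $\lambda : G/G' \otimes Z \to \M(G)$ by $\bar g \otimes \bar z \mapsto [\tilde g, \tilde z]\,[R, F]$, where $\tilde g \in F$ and $\tilde z \in S$ are lifts of $\bar g \in G$ and $\bar z \in Z$ respectively. The commutator $[\tilde g, \tilde z]$ lies in $[F, S] \subseteq R \cap F'$, so the formula lands in $\M(G)$. Well-definedness is the crux: invariance under the choice of lifts and bilinearity in each slot reduce, via the standard identities $[ab, c] = [a, c]^b [b, c]$ and $[a, bc] = [a, c][a, b]^c$, to absorbing correction commutators into $[R, F]$ using $[[S, F], F] \subseteq [R, F]$. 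Factoring through $G/G'$ in the first slot then follows because $(R \cap F')/[R, F]$ is abelian (note $[R \cap F', F'] \subseteq [R, F]$), and descent to $Z$ in the second slot follows from $[F, R] \subseteq [R, F]$. By construction $\im(\lambda) = [S, F]/[R, F] = \Ker(\mu)$, closing the argument. The main obstacle is the systematic commutator bookkeeping required for this well-definedness step; the individual identities are elementary, but every correction term must be tracked back into $[R, F]$.
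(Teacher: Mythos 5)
The paper does not prove this statement; it is quoted verbatim from Ganea's paper \cite{GT} as a preliminary, so there is no in-paper argument to compare against. Your Hopf-formula derivation is the standard self-contained proof of the Ganea sequence and it is correct: the two structural inclusions you isolate, $[S,F]\subseteq R\cap F'$ (centrality of $Z$) and $[[S,F],F]\subseteq[R,F]$, are exactly what make $\mu$, $\delta$ and $\lambda$ well defined, and the identifications $\Ker(\mu)=[S,F]/[R,F]=\im(\lambda)$, $\im(\mu)=(R\cap F')/[S,F]=\Ker(\delta)$, $\im(\delta)=G'\cap Z$ all check out. One small remark on your justification for factoring through $G/G'$: the phrasing suggests you only need the target to be abelian, but the operative point is that additivity in the first slot makes $\tilde g\mapsto[\tilde g,\tilde z][R,F]$ a homomorphism from $F$ into the abelian group $(R\cap F')/[R,F]$, which therefore kills $F'$; equivalently $[F',S]\subseteq[R,F]$ by the three subgroups lemma. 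Either way the step closes, so the argument is complete.
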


\begin{thm}[\cite{F}]\label{G1}
Let $Z$ be a central subgroup of a finite group $G$. Consider the Ganea map $\lambda: G/G' \otimes Z \rightarrow \M(G)$. Then $Z \subseteq Z^*(G)$ if and only if $G/G' \otimes Z$ is contained in the kernal of $\lambda$. 
\end{thm}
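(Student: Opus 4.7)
The plan is to fix a stem cover $\tau : G^e \onto G$ (so that $\ker\tau \cong \M(G)$ sits inside $Z(G^e) \cap (G^e)'$), to realise the Ganea map $\lambda$ concretely via commutators of lifts, and then to combine this with the Beyl--Felgner--Schmid description of the epicenter as the image $\tau(Z(G^e))$.

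First, I would describe $\lambda$ as follows: for $z \in Z$ and $g \in G$, choose preimages $\tilde z, \tilde g \in G^e$ under $\tau$; since $z$ is central in $G$, the commutator $[\tilde g, \tilde z]$ maps to $1$, so it lies in $\ker\tau = \M(G)$. Because $\M(G) \subseteq Z(G^e)$, altering either lift by an element of $\M(G)$ leaves the commutator unchanged, and one checks that the assignment $\bar g \otimes z \mapsto [\tilde g, \tilde z]$ defines a bilinear map $G/G' \otimes Z \to \M(G)$ agreeing with $\lambda$ on generators. This is the standard commutator realisation of the Ganea map, and it is the form that will make the centrality condition visible.

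Second, I would invoke the Beyl--Felgner--Schmid theorem that $Z^*(G) = \tau(Z(G^e))$ for any stem cover $\tau$. The containment $Z^*(G) \subseteq \tau(Z(G^e))$ is the easier half, since $G/\tau(Z(G^e)) \cong G^e/Z(G^e)$ is manifestly capable; the reverse containment rests on the fact that any central subgroup $K$ with $G/K$ capable pulls back to a central subgroup of a stem cover. With this input the equivalence becomes transparent: $\lambda(G/G' \otimes Z) = 0$ says that every lift $\tilde z$ of every $z \in Z$ commutes with every lift $\tilde g$ of every $g \in G$; since every element of $G^e$ differs from such a $\tilde g$ by an element of $\M(G) \subseteq Z(G^e)$, this is precisely the condition that each $z \in Z$ admits a lift in $Z(G^e)$, equivalently $Z \subseteq \tau(Z(G^e)) = Z^*(G)$.

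The main obstacle is the Beyl--Felgner--Schmid input itself, which is a genuine theorem about the interplay between stem covers and capability rather than a formal exercise. Once that identification is available, matching it against the vanishing of $\lambda$ is a direct unwinding of the commutator formula above, requiring no further computation.
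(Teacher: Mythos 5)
Your argument is correct, but note that the paper itself offers no proof of this statement: it is imported verbatim from Beyl--Felgner--Schmid \cite{F}, so there is no ``paper's proof'' to match against, and what you have written is essentially a sketch of the original argument from that source. The two genuinely nontrivial inputs are exactly where you place them. First, the identification of the Ganea map $\lambda$ of Theorem \ref{G} with the commutator pairing $\bar g \otimes z \mapsto [\tilde g,\tilde z]$ on a stem cover $\tau\colon G^e \to G$: this is correct, and the cleanest way to see that the pairing kills $G'$ is to observe that $x \mapsto [x,\tilde z]$ is a homomorphism $G^e \to \M(G)$ (its values are central), hence annihilates $(G^e)'$, which surjects onto $G'$. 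Second, the identity $Z^*(G)=\tau(Z(G^e))$: with the paper's definition of $Z^*$ as the smallest central subgroup with capable quotient, the inclusion $Z^*(G)\subseteq\tau(Z(G^e))$ is immediate as you say, but the reverse inclusion is not quite a matter of ``pulling back $K$ to the stem cover''; the standard route is to prove $Z^*(G)=\pi(Z(F/[F,R]))$ for a free presentation $G=F/R$ (using the lifting property of $F$ against an arbitrary central extension witnessing capability of $G/K$) and then to check that passing from $F/[F,R]$ to the stem cover $F/S$ does not enlarge the image of the center, which uses $S\cap F'\subseteq [F,R]$. Since you explicitly flag this identity as an external input from \cite{F} rather than claiming to reprove it, there is no gap; once both inputs are granted, your unwinding of the equivalence is complete and correct.
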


\begin{lemma}[See Lemma 9 of \cite{PM}]\label{O}
If a group $G$ has nilpotency class $\leq 5$, then\\
\centerline{$[x^n,y]=[x,y]^n[x,y,x]^{n \choose 2}[x,y,x,x]^{n \choose 3}[x,y,x,x,x]^{n \choose 4}[x,y,x,[x,y]]^{\sigma(n)}$}
for $x,y \in G$ and any positive integer $n$, where $\sigma(n)=n(n-1)(2n-1)/6$.
\end{lemma}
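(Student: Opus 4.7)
The plan is to prove the identity by induction on $n$. The base case $n=1$ is immediate, as both sides collapse to $[x,y]$. For the inductive step I would first apply the standard commutator expansion $[ab,c] = [a,c]^b[b,c]$ with $a=x$, $b=x^n$, $c=y$, which gives
$$[x^{n+1}, y] \;=\; [x,y]^{x^n}\cdot [x^n, y],$$
so that the inductive hypothesis handles the second factor and only the conjugate $[x,y]^{x^n}$ needs fresh analysis.

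Set $c=[x,y]$, $c_1=[c,x]$, $c_2=[c_1,x]$, $c_3=[c_2,x]$, and $d=[c_1,c]$; these lie in $\gamma_2(G)$, $\gamma_3(G)$, $\gamma_4(G)$, $\gamma_5(G)$, and $\gamma_5(G)$ respectively. Since $G$ has nilpotency class at most $5$, every commutator of weight at least $6$ vanishes, so $c_3$ and $d$ are central in $G$, the pairs $(c, c_2), (c, c_3), (c_1, c_2), (c_1, c_3), (c_2, c_3)$ all commute, and the only non-trivial commutator among $\{c, c_1, c_2, c_3\}$ is $[c, c_1] = d^{-1}$. A short auxiliary induction on $n$, using $c_i^x = c_i c_{i+1}$ together with these commutativities, then establishes
$$c^{x^n} \;=\; c\cdot c_1^n\cdot c_2^{n\choose 2}\cdot c_3^{n\choose 3}.$$

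Substituting this expansion together with the inductive hypothesis into $[x^{n+1}, y] = c^{x^n}\cdot[x^n, y]$ produces a word in $c, c_1, c_2, c_3, d$ which must be collected into the canonical form in the statement. I expect the delicate step to be the bookkeeping for the exponent of $d$: since $c\cdot c_1 = c_1\cdot c\cdot d^{-1}$ with $d$ central, moving the leading $c$ rightward past $c_1^n$ contributes a factor $d^{-n}$ (allowing it to merge with the middle $c^n$ to form $c^{n+1}$), and afterwards sliding the resulting $c^{n+1}$ leftward past $c_1^n$ contributes $d^{n(n+1)}$. Combining with the $d^{\sigma(n)}$ supplied by the inductive hypothesis gives total $d$-exponent $\sigma(n) - n + n(n+1) = \sigma(n) + n^2$. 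The calculation then closes with Pascal's identity ${n\choose k-1}+{n\choose k}={n+1\choose k}$ for $k=2,3,4$, which delivers the correct exponents of $c_1, c_2, c_3$, together with the elementary identity $\sigma(n+1) - \sigma(n) = n^2$, which follows directly from $\sigma(n) = n(n-1)(2n-1)/6$.
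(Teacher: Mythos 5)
Your proposal is correct, but note that the paper does not prove this lemma at all: it is quoted verbatim as Lemma 9 of Moravec's paper \cite{PM} and used as a black box, so there is no internal proof to compare against. Your self-contained induction is sound. The key points all check out: with $c=[x,y]$, $c_1=[c,x]$, $c_2=[c_1,x]$, $c_3=[c_2,x]$, $d=[c_1,c]$, the class-$5$ hypothesis kills every commutator of weight at least $6$, so $c_2$, $c_3$, $d$ are central relative to all the letters involved and the only obstruction to free commuting is $[c,c_1]=d^{-1}$; the auxiliary formula $c^{x^n}=c\,c_1^{n}c_2^{n\choose 2}c_3^{n\choose 3}$ follows cleanly from $c_i^{x}=c_ic_{i+1}$ because the single leading $c$ never has to cross a $c_1$; and in the main collection step the $d$-exponent $-n+n(n+1)=n^2$ matches $\sigma(n+1)-\sigma(n)$ since $\sigma(n)=\sum_{k=1}^{n-1}k^2$, while Pascal's identity handles the exponents of $c_1,c_2,c_3$. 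The one piece of care your sketch glosses over but that does hold is that $(c_1c_2)^n=c_1^nc_2^n$ and $(c_2c_3)^m=c_2^mc_3^m$ in the auxiliary induction, which is legitimate because $[c_1,c_2]$ and $[c_2,c_3]$ have weight at least $7$. So your argument is a valid elementary replacement for the citation.
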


The following results follow from \cite{BM} and  \cite{RO}.

\begin{lemma}\label{R}
For a group $G$, the following properties hold in $\nu (G)$.
\begin{enumerate}[label=(\roman*)]
\item If $G$ is nilpotent of class $c$, then $\nu(G)$ is nilpotent of class at most $c + 1$.
\item If $G$ is a $p$-group, then $\nu(G)$ is a $p$-group.
\item $[g_1^\phi,g_2,g_3]=[g_1,g_2^\phi,g_3]=[g_1,g_2,g_3^\phi]=[g_1^\phi,g_2^\phi,
g_3]=[g_1^\phi,g_2,g_3^\phi]=[g_1,g_2^\phi,g_3^\phi]$ for all $g_1,g_2,g_3 \in G$.
\item If either $g \in G'$ or $h \in H'$, then $[g,h^\phi]=[h,g^\phi]^{-1}$.
\item $[g,g^\phi]=1$, for all $g \in G'$.
\item $[[g_1,g_2^\phi],[h_1,h_2^\phi]]=[[g_1,g_2],[h_1,h_2]^\phi]$ for all $g_1,g_2,h_1,h_2 \in G$.
\item $[[g_1,g_2^{\phi}],[g_2,g_1^{\phi}]]=1$ for all $g_1,g_2\in G$.
\item If $g,g_1,g_2 \in G$ such that $[g,g_1]=1=[g,g_2]$, then $[g_1,g_2,g^{\phi}]=1$.
\item $[g,g^\phi]$ is central in $\nu(G)$ for all $g \in G$.
\end{enumerate}
\end{lemma}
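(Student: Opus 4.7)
The plan is to derive the nine assertions from the defining relations of $\nu(G)$, in particular from the pair of compatibility relations
\[
[g_1, g_2^\phi]^g = [g_1^g, (g_2^g)^\phi] = [g_1, g_2^\phi]^{g^\phi}
\]
and their counterparts with the conjugator drawn from $G^\phi$. The content of these relations is that the two natural actions of $G$ on the mixed commutators $[G, G^\phi]$ (by $G$-conjugation and via $\phi$) coincide, and almost every statement is a manipulation of this fact together with ordinary commutator calculus.

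I would start with (iii): the middle equality $[g_1, g_2^\phi]^g = [g_1, g_2^\phi]^{g^\phi}$ is literally $[[g_1, g_2^\phi], g_3] = [[g_1, g_2^\phi], g_3^\phi]$, so the outer coordinate may be freely switched between $G$ and $G^\phi$. Combining this with the analogous relations when the conjugator lies in $G^\phi$, and using the evident $G \leftrightarrow G^\phi$ symmetry of the defining presentation to swap the inner coordinate, the six-fold equality in (iii) falls out. Part (viii) is then an immediate corollary: if $g$ centralises $g_1$ and $g_2$, then $g_i^g = g_i$, so $[g_1, g_2^\phi]^g = [g_1, g_2^\phi]$, i.e. $[g_1, g_2^\phi, g] = 1$, and one application of (iii) upgrades this to $[g_1, g_2, g^\phi] = 1$. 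Parts (i) and (ii) are structural: since $\nu(G) = G \cdot G^\phi \cdot [G, G^\phi]$ with $[G, G^\phi] \cong G \otimes G$ a $p$-group when $G$ is, (ii) follows; and an iterated use of (iii) shows that $(c+2)$-fold commutators vanish when $G$ has class $c$, giving (i).

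Next I would handle (vi) and bootstrap (v) and (vii) from it. For (vi), expand $[[g_1, g_2^\phi], [h_1, h_2^\phi]]$ by repeatedly applying (iii) to transport the inner $\phi$-decorations outward, at which point both sides reduce to a common expression. Part (vii) is the special case $h_1 = g_2$, $h_2 = g_1$: the right-hand side becomes $[[g_1, g_2], [g_2, g_1]^\phi]$, which vanishes because $[g_2, g_1] = [g_1, g_2]^{-1}$ and any element commutes with its own inverse. Part (v) is also a consequence of (vi): for $g = [a, b]$ the choice $g_i = h_i$ gives $[g, g^\phi] = [[a, b^\phi], [a, b^\phi]] = 1$, and the general $g \in G'$ follows by induction on commutator length. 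Part (iv) can be derived similarly: for $g = [a, b]$ use (iii) to move the $\phi$ from the outer to the inner coordinate and compare with $[h, [a, b]^\phi]$ via a Hall--Witt-type expansion, then extend to arbitrary $g \in G'$ by induction.

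The main obstacle is (ix), the centrality of $[g, g^\phi]$. To show that $[g, g^\phi]$ commutes with $h \in G$, apply the compatibility relation to obtain
\[
[g, g^\phi]^h = [g^h, (g^h)^\phi] = [g\,[g, h],\; g^\phi\, [g, h]^\phi],
\]
and then expand the right-hand side via the identities $[xy, z] = [x, z]^y [y, z]$ and $[x, yz] = [x, z][x, y]^z$. Every cross-term produced has at least one argument in $G'$, or is of the form $[c, c^\phi]$ with $c \in G'$, so (iv), (v), and (vii) allow the cross-terms to be paired off or shown trivial, and the expression collapses to $[g, g^\phi]$. The analogous computation handles conjugation by $h^\phi$. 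The difficulty here is purely the bookkeeping of cross-terms, which is precisely why (iv)--(vii) must be proved first.
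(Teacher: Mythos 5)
The first thing to note is that the paper does not prove this lemma at all: it is imported from Rocco \cite{RO} and Blyth--Morse \cite{BM} with the single sentence ``The following results follow from \cite{BM} and \cite{RO}.'' So there is no internal proof to compare against, and your proposal is an attempt to reconstruct the arguments of those sources. Parts of the reconstruction are fine in outline: the outer-coordinate swap in (iii) really is just the defining relation $[g_1,g_2^\phi]^{g_3}=[g_1,g_2^\phi]^{g_3^\phi}$ read as $[g_1,g_2^\phi,g_3]=[g_1,g_2^\phi,g_3^\phi]$; part (viii) does follow from it exactly as you say; and deducing (v) and (vii) as specializations of (vi) is the standard route.

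There are, however, two genuine gaps. In (iii) the substantive equalities are the ones that change the \emph{inner} commutator, e.g.\ $[g_1,g_2^\phi,g_3]=[g_1,g_2,g_3^\phi]$ and $[g_1^\phi,g_2,g_3]=[g_1,g_2^\phi,g_3]$, and your appeal to ``the evident $G\leftrightarrow G^\phi$ symmetry'' cannot deliver them: that symmetry is an automorphism $\sigma$ of $\nu(G)$, and knowing that $\sigma$ carries $[g_1^\phi,g_2,g_3]$ to $[g_1,g_2^\phi,g_3^\phi]$ says nothing about whether an element equals its image. These equalities require an actual computation (expanding commutators of products such as $[g_1g_2,g_3^\phi]$ against the compatibility relations) and are the technical heart of Rocco's Lemma 2.1; your sketch omits them. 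Second, in (ix) the expansion of $[g,g^\phi]^h=[gc,(gc)^\phi]$ with $c=[g,h]$ does not simply ``collapse'': after applying (iv), (v) and the equality of the $c$- and $c^\phi$-actions, one is left with $[g,g^\phi]$ conjugated by an element involving $[g,c^\phi]\in[G,G^\phi]$ and powers of $c$, and eliminating that conjugation already requires knowing that $[g,g^\phi]$ commutes with $[G,G^\phi]$ and with $G'$ --- a weaker form of the very centrality being proved. The published proofs avoid this circularity by first showing that the elements $[g,h^\phi][h,g^\phi]$ are central. Smaller issues of the same kind affect (i), where the class bound must control commutators of arbitrary elements of $\nu(G)$ rather than only decorated ones, and (ii), where the fact that $G\otimes G$ is a $p$-group is itself a theorem rather than an observation.
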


The following result follows from the proof of \cite[Lemma 2.1(iv)]{RO}:

\begin{lemma}
For all $g_1, g_2 \in G$, $[g_1,g_2^{\phi}]=[g_2,g_1^{\phi}]^{-1}$ in $G \wedge G$, i.e., modulo $\nabla(G)$.
\end{lemma}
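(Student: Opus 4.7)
The plan is to exploit the relation $[g_1 g_2,(g_1 g_2)^\phi]\in\nabla(G)$, since this element is of the form $[g,g^\phi]$ with $g=g_1 g_2\in G$. Expanding this commutator in $\nu(G)$ via the standard identities $[x,yz]=[x,z][x,y]^z$ and $[xy,z]=[x,z]^y[y,z]$ will produce, in addition to the two mixed terms $[g_1,g_2^\phi]$ and $[g_2,g_1^\phi]$ that we care about, two diagonal terms $[g_1,g_1^\phi]$ and $[g_2,g_2^\phi]$. The latter two already lie in $\nabla(G)$, and by Lemma \ref{R}(ix) they are central in $\nu(G)$, so their various conjugates collapse and they simply disappear modulo $\nabla(G)$.

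After this clean-up, the relation should read
\[
[g_1,g_2^\phi]^{g_2}\,[g_2,g_1^\phi]^{g_2^\phi}\equiv 1\pmod{\nabla(G)}.
\]
To combine these two conjugates I would use the defining relation of $\nu(G)$, namely $[x,y^\phi]^{g}=[x,y^\phi]^{g^\phi}$, which tells us that conjugation of a basic generator $[x,y^\phi]$ by $g_2^\phi$ has the same effect as conjugation by $g_2$. So the second factor can be rewritten as $[g_2,g_1^\phi]^{g_2}$, and the displayed relation becomes $\bigl([g_1,g_2^\phi][g_2,g_1^\phi]\bigr)^{g_2}\equiv 1\pmod{\nabla(G)}$. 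Since $\nabla(G)$ is a normal subgroup of $\nu(G)$, we may strip the outer conjugation and conclude $[g_1,g_2^\phi][g_2,g_1^\phi]\in\nabla(G)$, which is exactly the statement.

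The only mildly delicate point I anticipate is bookkeeping in the expansion: one of the intermediate terms appears as $[g_1,g_1^\phi]^{g_2 g_2^\phi}$, and one has to invoke centrality of $[g_1,g_1^\phi]$ in $\nu(G)$ (Lemma \ref{R}(ix)) to see that this reduces to $[g_1,g_1^\phi]$ itself, and similarly to move $[g_2,g_2^\phi]$ to the far right out of the way. Once this centrality is used, the argument is purely formal. No additional machinery beyond the parts of Lemma \ref{R} already established and the defining relations of $\nu(G)$ is needed.
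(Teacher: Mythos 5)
Your proposal is correct, and it is essentially the standard argument: the paper itself gives no written proof but defers to the proof of Rocco's Lemma 2.1(iv), which rests on exactly the expansion of $[g_1g_2,(g_1g_2)^\phi]$ that you carry out, using centrality of the diagonal elements $[g,g^\phi]$ (Lemma \ref{R}(ix)) and the defining relation $[x,y^\phi]^{g}=[x,y^\phi]^{g^\phi}$ to collapse the conjugates. The bookkeeping you flag is handled correctly, and normality of $\nabla(G)$ (indeed centrality, since its generators are central) justifies stripping the final outer conjugation.
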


\begin{prop}[See Proposition 20 of  \cite{BM}]\label{B}
Let G be a polycyclic group with a polycyclic generating sequence $g_1,\ldots , g_k$.
Then $G \wedge G$ is generated by $\{[g_i,g_j^{\phi}], i > j\}$.
\end{prop}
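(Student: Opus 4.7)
The plan is to prove that every element of $G \wedge G$ can be expressed as a product of the images, modulo $\nabla(G)$, of the elements $[g_i, g_j^\phi]$ with $i > j$. Since $[G, G^\phi]$ is a commutator subgroup in $\nu(G)$, it is generated as a group by $\{[g, h^\phi] : g, h \in G\}$, and consequently the isomorphic image $G \wedge G \cong [G, G^\phi]/\nabla(G)$ is generated by the images of these elements. The task reduces to showing that each such image can be written in terms of the proposed generating set.

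The first step would be to write $g = g_{i_1}^{\epsilon_1} \cdots g_{i_r}^{\epsilon_r}$ and $h = g_{j_1}^{\delta_1} \cdots g_{j_s}^{\delta_s}$ as words in the polycyclic generators and then apply the defining relations of $\nu(G)$ in the form of the bilinearity-type identities
\begin{align*}
[xy, z^\phi] &= [x, z^\phi]^y \, [y, z^\phi], \\
[x, y^\phi z^\phi] &= [x, z^\phi] \, [x, y^\phi]^{z^\phi},
\end{align*}
together with the inverse formulas $[x^{-1}, z^\phi] = ([x, z^\phi]^{-1})^{x^{-1}}$ and $[x, (y^{-1})^\phi] = ([x, y^\phi]^{-1})^{(y^{-1})^\phi}$. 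Iterating these rewrites $[g, h^\phi]$ as a product of conjugates $\bigl([g_i, g_j^\phi]^{\pm 1}\bigr)^w$ for various $w$ in the subgroup of $\nu(G)$ generated by the $g_l$ and $g_l^\phi$.

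The remaining step is to absorb the conjugating factors. For this one uses the defining relation $[g_i, g_j^\phi]^w = [g_i^w, (g_j^w)^\phi]$ together with Lemma \ref{R}(iii), which forces a correction term of the form $[[g_i, g_j^\phi], w]$ to again lie in $[G, G^\phi]$ and be expressible as a commutator with $w^\phi$. Because $\nu(G)$ is nilpotent (Lemma \ref{R}(i), since the groups considered in this paper are nilpotent $p$-groups), the iterated commutator corrections eventually vanish and the recursion closes after finitely many steps. Finally, the index restriction is obtained from the lemma immediately preceding the proposition, which gives $[g_i, g_j^\phi] \equiv [g_j, g_i^\phi]^{-1} \pmod{\nabla(G)}$, together with $[g_i, g_i^\phi] \in \nabla(G)$ by definition of $\nabla(G)$. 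The principal obstacle is the recursive control of the conjugating factors produced by the bilinearity identities, since a priori they could spawn an unbounded cascade of new commutators; what makes the argument close up cleanly is precisely the nilpotency of $\nu(G)$ provided by Lemma \ref{R}(i).
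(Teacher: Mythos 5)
First, note that the paper does not prove this proposition at all: it is quoted verbatim from Proposition 20 of \cite{BM}, so there is no internal argument to compare against, and your proposal is an attempt to supply one. The outline you give (expand $[g,h^\phi]$ by the biadditivity identities, then absorb the conjugating factors) is the natural one, your expansion and inversion formulas are correct, and your final reduction from the full set $\{[g_i,g_j^\phi]\}$ to the set with $i>j$, via $[g_i,g_j^\phi]\equiv[g_j,g_i^\phi]^{-1}$ modulo $\nabla(G)$ and $[g_i,g_i^\phi]\in\nabla(G)$, is fine.

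There are, however, two genuine gaps. First, the proposition is stated for arbitrary polycyclic groups, whereas your termination argument invokes the nilpotency of $\nu(G)$ via Lemma \ref{R}(i); this only covers nilpotent $G$. For the paper's applications to finite $p$-groups the restriction is harmless, but it does not prove the statement as written (Blyth and Morse instead work along the polycyclic series). Second, and more seriously, the claim that ``the iterated commutator corrections eventually vanish'' is not delivered by the mechanism you describe. The first-level correction is $[[g_i,g_j^\phi],w]=[[g_i,g_j],w^\phi]$, an element of $[\gamma_2(G),G^\phi]\le\gamma_3(\nu(G))$. But when you re-express it by writing $[g_i,g_j]$ and $w$ as words in the $g_l$ and applying the same biadditivity identities, the new correction terms are again of the form $[[g_l,g_m],v^\phi]\in[\gamma_2(G),G^\phi]$: they sit at exactly the same level of the lower central series of $\nu(G)$ as the term you started from, so the recursion does not close merely because $\nu(G)$ is nilpotent. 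To make this approach work one must refrain from flattening the inner commutator back into a pcgs word and instead induct on a total commutator weight of both entries (in the style of the Hall collection process), or argue by induction along the polycyclic series as in \cite{BM}. As written, the key step is asserted rather than proved.
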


The generating sets for the groups $G$, given in \cite{RJ}, form  polycyclic generating sequences. Then Proposition \ref{B} provides a  generating set for  $G \wedge G$. This information will be used several times throughout the article without any further reference.

By items $(vi)$ and $(viii)$ of Lemma \ref{R}, we get
\begin{lemma}\label{S}
If $G$ is of nilpotency class $2$, then $G \otimes G$ is abelian.
\end{lemma}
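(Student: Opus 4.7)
The plan is to work with the concrete model $G \otimes G \cong [G, G^\phi] \subseteq \nu(G)$ provided by \cite{RO}, and show that any two of the standard generators $[g, h^\phi]$ of $[G, G^\phi]$ commute. Once this is established, $[G, G^\phi]$ (and hence $G \otimes G$) is abelian.

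First I would pick arbitrary generators $[g_1, g_2^\phi]$ and $[h_1, h_2^\phi]$ and form their commutator
\[
\bigl[[g_1, g_2^\phi],\,[h_1, h_2^\phi]\bigr]
\]
in $\nu(G)$. The key first move is Lemma \ref{R}(vi), which rewrites this as
\[
\bigl[[g_1, g_2],\,[h_1, h_2]^\phi\bigr],
\]
shifting the question into a commutator involving an element of $G'$ on one side and an element of $(G')^\phi$ on the other.

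Next I would invoke the nilpotency class $2$ hypothesis: every element of $G'$ is central in $G$. In particular $c := [h_1, h_2]$ commutes with both $g_1$ and $g_2$. This is exactly the hypothesis of Lemma \ref{R}(viii) (with the roles $g \leftrightarrow c$, $g_1 \leftrightarrow g_1$, $g_2 \leftrightarrow g_2$), which then yields $[g_1, g_2, c^\phi] = 1$, i.e.\ $\bigl[[g_1, g_2],\,[h_1, h_2]^\phi\bigr] = 1$. Therefore any two generators of $[G, G^\phi]$ commute, so $[G, G^\phi]$ is abelian and consequently $G \otimes G$ is abelian.

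There is no genuine obstacle here; the only subtlety is choosing the right combination of identities from Lemma \ref{R}, namely (vi) to move one of the two commutators across the $\phi$-barrier, and (viii) together with the class-$2$ hypothesis to kill the resulting triple commutator. Everything else is formal.
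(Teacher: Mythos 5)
Your argument is correct and is precisely the one the paper intends: the paper derives this lemma by citing items (vi) and (viii) of Lemma \ref{R}, and your proof simply spells out how those two identities combine with the class-$2$ hypothesis (centrality of $[h_1,h_2]$) to make any two generators $[g_1,g_2^\phi]$ and $[h_1,h_2^\phi]$ of $[G,G^\phi]\cong G\otimes G$ commute. No further comment is needed.
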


Let $\Gamma$ denote  the Whitehead's quadratic functor (see \cite{W}).
For finite abelian groups $G$ and $H$, we get
$$(i)\;\; \Gamma(G \times H)\cong \Gamma(G) \times \Gamma(H) \times (G \otimes H),$$
\begin{align*}
(ii)\;\;\;\;\;\;\;\Gamma(\mathbb{Z}_n)=\left\{\begin{array}{lll}
               \mathbb{Z}_n & \textnormal{ n odd} \\
               \mathbb{Z}_{2n} & \textnormal{ n even}\;. \;\;\;\;\;\;\;\;\;\;
            \end{array}\right.
\end{align*}

\begin{thm}[See Theorem 1.3 of \cite{BF}]\label{E}
Let $G^{ab}$ be finitely generated, and  have no element of order $2$. Then $G \otimes G \cong \Gamma (G^{ab}) \times (G \wedge G)$. In particular, if $G$ is a finite $p$-group, $p$ odd, then 
$G \otimes G \cong \Gamma (G^{ab}) \times (G \wedge G)$.
\end{thm}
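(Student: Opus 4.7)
The plan is to exploit the natural central extension
$$1 \to \nabla(G) \to G \otimes G \xrightarrow{\pi} G \wedge G \to 1, \qquad (\star)$$
where $\nabla(G) = \gen{g \otimes g \mid g \in G}$. Its centrality in $G \otimes G$ is immediate from Lemma \ref{R}(ix) under the isomorphism $\Phi : G \otimes G \to [G, G^\phi]$. Because $\nabla(G)$ is central, producing any section of $\pi$ automatically yields an internal direct product decomposition $G \otimes G \cong \nabla(G) \times s(G \wedge G)$. So it suffices to (a) identify $\nabla(G)$ with $\Gamma(G^{ab})$, and (b) show that $(\star)$ splits.

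For (a), the idea is to construct a surjection $\psi : \Gamma(G^{ab}) \to \nabla(G)$ sending $\gamma(\bar g)$ to $g \otimes g$, which requires checking well-definedness against the defining relations of the Whitehead quadratic functor: namely $\gamma(-x) = \gamma(x)$, and bi-additivity of the cross-effect $h(x,y) := \gamma(x+y) - \gamma(x) - \gamma(y)$. Both translate into identities in $\nabla(G)$ that follow from the tensor-square expansion rules together with the centrality of $\nabla(G)$, which absorbs commutator factors back into $\nabla$. Surjectivity is built in. For injectivity I would compose with the natural projection $G \otimes G \to G^{ab} \otimes_{\mathbb{Z}} G^{ab}$ and then project onto its symmetric part; under the no-$2$-torsion hypothesis, the symmetric part of the abelian tensor square of $G^{ab}$ is canonically isomorphic to $\Gamma(G^{ab})$, furnishing a left inverse to $\psi$.

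For (b), I would define $s : G \wedge G \to G \otimes G$ on the polycyclic generating set of Proposition \ref{B} by sending $[g, h^\phi] \bmod \nabla(G)$ to a canonical normalized representative in $[G, G^\phi]$. The absence of $2$-torsion on $G^{ab}$ is precisely what pins down the normalization uniquely, since it allows one to halve ambiguities of the form $(g \otimes h)(h \otimes g)$, which lie in $\nabla(G)$. Verifying that $s$ respects the defining relations of $G \wedge G$ is the chief technical work; in cohomological language, one checks that the class of $(\star)$ in $\Ho^2\bigl(G \wedge G, \Gamma(G^{ab})\bigr)$ vanishes.

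The hardest step will be (b), namely producing a homomorphic splitting rather than a set-theoretic section, and the no-$2$-torsion hypothesis is the decisive input that makes this possible. The final assertion for odd $p$-groups is then automatic, since such $G$ have no $2$-torsion anywhere, in particular on $G^{ab}$.
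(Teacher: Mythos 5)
The paper offers no proof of Theorem \ref{E} at all --- it is imported verbatim from Blyth--Fumagalli--Morigi \cite{BF} --- so your proposal can only be measured against the standard argument there, which is essentially your step (a). And step (a), carried to completion, already finishes the proof: the Brown--Loday homomorphism $\psi\colon \Gamma(G^{ab}) \to G\otimes G$ sending $\gamma(\bar g)$ to $g\otimes g$ has image $\nabla(G)$ and is well defined for \emph{every} group (no torsion hypothesis is needed there); the composite of $\psi$ with $G\otimes G \to G^{ab}\otimes G^{ab}$ is the canonical map $\Gamma(G^{ab})\to G^{ab}\otimes G^{ab}$, which for finitely generated $G^{ab}$ without $2$-torsion is injective with image a direct summand, since $0\to\Gamma(A)\to A\otimes A\to A\wedge A\to 0$ splits (check it on a decomposition of $A$ into cyclic summands). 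Composing a retraction of that summand with $\psi$ yields a homomorphic retraction $r\colon G\otimes G\to\nabla(G)$. Because $\nabla(G)$ is central (Lemma \ref{R}(ix)), the map $t\mapsto \bigl(r(t),\, t\,r(t)^{-1}\bigr)$ is an isomorphism $G\otimes G\cong\nabla(G)\times\ker r$, and $\ker r$ maps isomorphically onto $G\wedge G$ under the projection. Nothing further is required; in particular no section of $\pi$ ever has to be produced.

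Your step (b) is therefore redundant, and it is fortunate that it is, because as formulated it would not go through. Proposition \ref{B} gives generators of $G\wedge G$ for polycyclic $G$ but no defining relations, so ``verifying that $s$ respects the defining relations of $G\wedge G$'' has no content from the material quoted in this paper, and the theorem is in any case asserted for arbitrary $G$ with $G^{ab}$ finitely generated, not only for polycyclic $G$. The proposed normalization by ``halving'' the ambiguity $(g\otimes h)(h\otimes g)$ is also unjustified: that element lies in $\nabla(G)\cong\Gamma(G^{ab})$, which is uniquely $2$-divisible only when $G^{ab}$ is a finite group of odd order (for instance $\Gamma(\mathbb{Z})=\mathbb{Z}$ when $G^{ab}$ has a free summand), so the hypothesis ``no element of order $2$'' does not deliver the halving you invoke in the stated generality; and even where it does, multiplicativity of the resulting $s$ is exactly the nontrivial point and is not addressed. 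The correct repair is simply to delete (b), observe that a central subgroup admitting a homomorphic retraction is automatically a direct factor, and let the retraction built in (a) do all the work.
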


\begin{prop}[See Proposition 11 of \cite{RB}]\label{D1}
For the groups $G$ and $H$, we have 
$$(G \times H) \otimes (G \times H)=(G \otimes G) \times (G \otimes H) \times (H \otimes G) \times (H \otimes H).$$
\end{prop}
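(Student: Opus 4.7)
The plan is to work inside $\nu(G\times H)$, identifying $(G\times H)\otimes(G\times H)$ with $[G\times H,(G\times H)^\phi]$ via the isomorphism $\Phi$. Viewing $G$ and $H$ as subgroups of $G\times H$, the inclusions are equivariant (the cross actions being trivial since $G$ and $H$ commute element-wise inside $G\times H$), so they induce homomorphisms $G\otimes G\to (G\times H)\otimes(G\times H)$, $G\otimes H\to(G\times H)\otimes(G\times H)$, and so on. Under $\Phi$, the images are the subgroups $A=[G,G^\phi]$, $B=[G,H^\phi]$, $C=[H,G^\phi]$, $D=[H,H^\phi]$ of $\nu(G\times H)$.

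The first technical step is to show that $A,B,C,D$ pairwise commute. For the pair $(A,B)$: by Lemma \ref{R}(vi), for $g_1,g_2,g_3\in G$ and $h\in H$,
\[
[[g_1,g_2^\phi],[g_3,h^\phi]]=[[g_1,g_2],[g_3,h]^\phi]=1,
\]
since $[g_3,h]=1$ in $G\times H$. The pairs $(A,C)$, $(B,D)$, and $(C,D)$ are handled identically. For $(A,D)$: Lemma \ref{R}(vi) gives $[[g_1,g_2^\phi],[h_1,h_2^\phi]]=[[g_1,g_2],[h_1,h_2]^\phi]$, and since $[g_1,g_2]\in G$ commutes with both $h_1$ and $h_2$, Lemma \ref{R}(viii) (applied with $g=[h_1,h_2]$) forces $[[g_1,g_2],[h_1,h_2]^\phi]=1$; a symmetric argument handles $(B,C)$.

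Next I would show $ABCD=[G\times H,(G\times H)^\phi]$ by expanding a generator $[(g,h),(g',h')^\phi]$ using the commutator identities $[xy,z^\phi]=[x,z^\phi]^y[y,z^\phi]$ and $[x,(yz)^\phi]=[x,z^\phi][x,y^\phi]^{z^\phi}$ applied to the factorizations $(g,h)=(g,1)(1,h)$ and $(g',h')^\phi=(g',1)^\phi(1,h')^\phi$. Each of the four resulting commutators lies in one of $A,B,C,D$, and the conjugations simplify because $G$ and $H$ centralize each other in $G\times H$. The pairwise-commuting inclusions then assemble into a surjective homomorphism
\[
\psi:(G\otimes G)\times(G\otimes H)\times(H\otimes G)\times(H\otimes H)\longrightarrow (G\times H)\otimes(G\times H).
\]
For injectivity, I would construct an inverse from the universal property of the non-abelian tensor square by setting $(g,h)\otimes(g',h')\mapsto\bigl(g\otimes g',\,g\otimes h',\,h\otimes g',\,h\otimes h'\bigr)$ and checking that the two defining tensor relations are respected; the trivial cross actions collapse each relation into four parallel identities, one in each factor. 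The main obstacle is the bookkeeping in this verification: tracking the conjugation factors through the tensor relations requires care, but the direct-product structure of the codomain and the commuting-subgroup structure of the image together ensure that everything assembles into a well-defined two-sided inverse to $\psi$.
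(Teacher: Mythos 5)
Your proposal is correct, but note that the paper does not actually prove this statement: it imports it verbatim as Proposition 11 of \cite{RB} (Brown--Johnson--Robertson), so there is no in-paper proof to compare against. What you have written is a legitimate self-contained proof carried out in the Rocco framework $\nu(G\times H)$ that the paper uses elsewhere, and it is sound. The one genuinely delicate point is the commutation of $A=[G,G^\phi]$ with $D=[H,H^\phi]$ (and of $B$ with $C$): Lemma \ref{R}(vi) reduces this to $[[g_1,g_2],[h_1,h_2]^\phi]=1$, and mere commutativity of $[g_1,g_2]$ with $[h_1,h_2]$ in $G\times H$ would \emph{not} suffice (otherwise all abelian tensor squares would vanish); you correctly invoke Lemma \ref{R}(viii), whose hypotheses $[[h_1,h_2],g_1]=[[h_1,h_2],g_2]=1$ do hold here, so this step is fine. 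The remaining ingredients --- normality of $A,B,C,D$ under the $\nu$-relations $[g_1,g_2^\phi]^g=[g_1^g,(g_2^g)^\phi]$ to justify the generation claim, and the crossed-pairing (biderivation) check $(g,h)\otimes(g',h')\mapsto(g\otimes g',g\otimes h',h\otimes g',h\otimes h')$ for injectivity --- are routine and correctly indicated; the injectivity half is essentially the argument of \cite{RB} itself, with the trivial cross-actions collapsing $G\otimes H$ to $G^{ab}\otimes H^{ab}$ so that the conjugation decorations disappear coordinatewise. The only cosmetic caveat is that the statement as printed asserts an equality where an isomorphism is meant, which your proof delivers.
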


\begin{thm} \label{SHHH}
We have the following table for non-abelian groups of order $p^3$ and $p^4$, $p \ge 5$.
 \begin{table}[H]
\centering
 \begin{tabular}{|c c c c c c c|} 
 \hline
 $G$ & $G^{ab}$ & $\Gamma (G^{ab})$ & $\M(G)$ & $G \wedge G$ &  $G \otimes G$ & Generators of $G \wedge G$ \\ [.5ex] 
 \hline
$\Phi_2(21)$ & $\mathbb{Z}_p^{(2)}$ &  $\mathbb{Z}_p^{(3)}$ &  $\{1\}$ & $\mathbb{Z}_p$ & $\mathbb{Z}_p^{(4)}$ &  $[\alpha_1,\alpha^\phi]$ \\

$\Phi_2(111)$ & $\mathbb{Z}_p^{(2)}$ &  $\mathbb{Z}_p^{(3)}$ &  $\mathbb{Z}_p^{(2)}$ & $\mathbb{Z}_p^{(3)}$ & $\mathbb{Z}_p^{(6)}$ & $[\alpha_1,\alpha^\phi], [\alpha_2,\alpha^\phi], [\alpha_2,\alpha_1^\phi] $ \\
 
$\Phi_2(211)a$ & $\mathbb{Z}_p^{(3)}$ &  $\mathbb{Z}_p^{(6)}$ &  $\mathbb{Z}_p^{(2)}$ & $\mathbb{Z}_p^{(3)}$ & $\mathbb{Z}_p^{(9)}$ & $[\alpha_1,\alpha^\phi], [\alpha_3,\alpha^\phi], [\alpha_3,\alpha_1^\phi] $ \\

$\Phi_2(1^4)$ & $\mathbb{Z}_p^{(3)}$ &  $\mathbb{Z}_p^{(6)}$ &  $\mathbb{Z}_p^{(4)}$ & $\mathbb{Z}_p^{(5)}$ & $\mathbb{Z}_p^{(11)}$ & $[\alpha_1,\alpha^\phi], [\alpha_2,\alpha^\phi], [\alpha_2,\alpha_1^\phi], $  \\

& & & & &  & $[\alpha_3,\alpha^\phi], [\alpha_3,\alpha_1^\phi] $ \\

$\Phi_2(31)$ & $\mathbb{Z}_{p^2} \times \mathbb{Z}_p$ &  $\mathbb{Z}_{p^2} \times \mathbb{Z}_p^{(2)}$ &  $\{1\}$ & $\mathbb{Z}_p$ & $\mathbb{Z}_{p^2} \times \mathbb{Z}_p^{(3)}$ & $[\alpha_1,\alpha^\phi]$\\

$\Phi_2(22)$ & $\mathbb{Z}_{p^2} \times \mathbb{Z}_p$ &  $\mathbb{Z}_{p^2} \times \mathbb{Z}_p^{(2)}$ &  $\mathbb{Z}_p$ & $\mathbb{Z}_{p^2}$ & $\mathbb{Z}_{p^2}^{(2)} \times \mathbb{Z}_p^{(2)}$ & $[\alpha_1,\alpha^\phi]$ \\

$\Phi_2(211)b$ & $\mathbb{Z}_p^{(3)}$ &  $\mathbb{Z}_p^{(6)}$ &  $\mathbb{Z}_p^{(2)}$ & $\mathbb{Z}_p^{(3)}$ & $\mathbb{Z}_p^{(9)}$ & $[\alpha_1,\alpha^\phi], [\gamma,\alpha^\phi], [\gamma,\alpha_1^\phi] $  \\

$\Phi_2(211)c$ & $\mathbb{Z}_{p^2} \times \mathbb{Z}_p$ &  $\mathbb{Z}_{p^2} \times \mathbb{Z}_p^{(2)}$ &  $\mathbb{Z}_p^{(2)}$ & $\mathbb{Z}_p^{(3)}$ & $\mathbb{Z}_{p^2} \times \mathbb{Z}_p^{(5)}$ & $[\alpha_1,\alpha^\phi], [\alpha_2,\alpha^\phi], [\alpha_2,\alpha_1^\phi] $\\

$\Phi_3(211)a$ & $\mathbb{Z}_p^{(2)}$ &  $\mathbb{Z}_p^{(3)}$ &  $\mathbb{Z}_p$ & $\mathbb{Z}_p^{(3)}$ & $\mathbb{Z}_p^{(6)}$ & $[\alpha_1,\alpha^\phi], [\alpha_2,\alpha^\phi], [\alpha_2,\alpha_1^\phi] $ \\

$\Phi_3(211)b_r$ & $\mathbb{Z}_p^{(2)}$ &  $\mathbb{Z}_p^{(3)}$ &  $\mathbb{Z}_p$ & $\mathbb{Z}_p^{(3)}$ & $\mathbb{Z}_p^{(6)}$ & $[\alpha_1,\alpha^\phi], [\alpha_2,\alpha^\phi], [\alpha_2,\alpha_1^\phi]$  \\

$\Phi_3(1^4)$ & $\mathbb{Z}_p^{(2)}$ &  $\mathbb{Z}_p^{(3)}$ &  $\mathbb{Z}_p^{(2)}$ & $\mathbb{Z}_p^{(4)}$ & $\mathbb{Z}_p^{(7)}$ & $[\alpha_1,\alpha^\phi], [\alpha_2,\alpha^\phi], [\alpha_2,\alpha_1^\phi], $ \\
&&&&&&$[\alpha_3,\alpha^\phi] $ \\
[.5ex] 
 \hline
 \end{tabular}
\end{table}
\end{thm}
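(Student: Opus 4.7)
The plan is to proceed row by row through the table, using the same four-step recipe for each group $G$, and to assemble the tensor square at the end via Theorem \ref{E}. Since $p \ge 5$ is odd, Theorem \ref{E} gives $G \otimes G \cong \Gamma(G^{ab}) \times (G \wedge G)$, so the last column is forced by the third and fifth, and the task reduces to identifying $G^{ab}$, $\M(G)$, the structure of $G \wedge G$, and an economical generating set for $G \wedge G$.

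First, from the presentation of $G$ in Annexure A, read off $G^{ab} = G/G'$ directly and compute $\Gamma(G^{ab})$ by iterating the two rules $\Gamma(A \times B) \cong \Gamma(A) \times \Gamma(B) \times (A \otimes B)$ and $\Gamma(\mathbb{Z}_{p^k}) \cong \mathbb{Z}_{p^k}$. Second, compute $\M(G)$. For the two groups of order $p^3$ this is Schur's classical calculation: the multiplier is trivial for the extraspecial group $\Phi_2(21)$ of exponent $p^2$ and is $\mathbb{Z}_p^{(2)}$ for the one of exponent $p$. For the six class-two groups of order $p^4$, apply the Ganea exact sequence of Theorem \ref{G} with $Z$ a central cyclic subgroup chosen so that $G/Z$ is one of the known $p^3$-groups or is abelian, and bound from above via Theorem \ref{J}; the two bounds pinch $|\M(G)|$, and the Ganea map reveals the invariant factors. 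For the three maximal-class groups $\Phi_3(\cdot)$, take $N$ to be a maximal subgroup with $G/N$ cyclic and apply Theorem \ref{J'} together with Theorem \ref{2}.

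Third, assemble $G \wedge G$. The exact sequence attached to the epimorphism $G \wedge G \onto G'$ gives $|G \wedge G| = |\M(G)| \cdot |G'|$, so the order is known in each row. For the class-two groups, Lemma \ref{S} makes $G \wedge G$ abelian, and Proposition \ref{B} cuts the generating set down to $\{[\alpha_i, \alpha_j^\phi] : i > j\}$ for the polycyclic generators $\alpha_i$ from Annexure A; Lemma \ref{R}(iv), (v) together with the skew-symmetry lemma preceding Proposition \ref{B} eliminate redundant generators, leaving exactly the generators listed in the last column of the table. The invariant factors are then pinned by matching the order. For the class-three groups one additionally uses Lemma \ref{O} and Lemma \ref{R}(iii), (vi), (viii) to show each listed generator has order exactly $p$ and that they commute pairwise in $\nu(G)$, forcing the elementary abelian structure.

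The main obstacle lies in the maximal-class cases: here $\nu(G)$ itself has nilpotency class up to $4$ by Lemma \ref{R}(i), so one must manipulate nested commutators carefully. Verifying that every $[\alpha_i, \alpha_j^\phi]$ with $i > j$ which is \emph{not} in the listed generating set is already trivial modulo the relations of $\nu(G)$, and that every $p$-th power $[\alpha_i, \alpha_j^\phi]^p$ collapses to $1$, is the only step that is not essentially mechanical. Once this bookkeeping is complete, the order estimate from step two forces the abelian structure of $G \wedge G$, and a final application of Theorem \ref{E} delivers the tensor square column.
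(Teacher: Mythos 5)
Your architecture for the exterior and tensor square columns matches the paper's: get $|G\wedge G|=|\M(G)|\,|G'|$, generate $G\wedge G$ via Proposition \ref{B}, kill and bound generators with Lemma \ref{R}(viii), Lemma \ref{O} and (for $\Phi_3(1^4)$) the Hall--Witt identity, then apply Theorem \ref{E}. (Minor point: the workhorses for the class-two reductions are Lemma \ref{R}(viii) and Lemma \ref{O}, not really \ref{R}(iv),(v).) The genuine problem is your second step. The paper does not re-derive the Schur multipliers of the $p^3$ and $p^4$ groups at all --- it cites them (from \cite{KO} for $|G'|=p$, from \cite{EG} for $|G'|=p^2$, and classical results for $p^3$) and only then computes the exterior squares. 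Your proposal to pin them down with Theorems \ref{G}, \ref{J}, \ref{J'} and \ref{2} does not close in several rows. For $G=\Phi_3(211)a$, Theorem \ref{2}(i) with $K=\Z(G)=\langle\alpha_3\rangle$ gives $|\M(G)|\ge p$ (since $G/K\cong\Phi_2(111)$), while the best cyclic-quotient bound from Theorem \ref{J'} (take $N=\langle\alpha,\alpha_2\rangle\cong\Phi_2(21)$, so $\M(N)=1$ and $N/N'\cong\mathbb{Z}_p^{(2)}$) only yields $|\M(G)|\le p^2$; nothing you have named decides between $p$ and $p^2$. The same happens for $\Phi_2(22)$, where the Ganea sequence with $Z=G'$ only gives $|\M(G)|\le p^2$ and none of the quoted inequalities supplies the matching lower bound $p$ (the paper's actual source here is the metacyclic formula). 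Since your step 3 takes $|\M(G)|$ as an input to fix $|G\wedge G|$, these rows are left undetermined.

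The gap is repairable in two ways, and it is worth being explicit about which you intend: either cite the known multipliers as the paper does, or interleave steps 2 and 3 --- use the generator reduction to bound $|G\wedge G|$ from above (e.g.\ three generators of order at most $p$ for $\Phi_3(211)a$ gives $|G\wedge G|\le p^3$), combine with the lower bound $|G\wedge G|\ge |G'|\cdot p$ coming from Theorem \ref{2}(i), and read $\M(G)$ off as the kernel of $G\wedge G\onto G'$. The paper itself uses this second, interleaved strategy for many of the $p^5$ groups in Sections 4 and 5, but in Theorem \ref{SHHH} it deliberately takes the first route. As written, your sequential plan (multiplier first, then exterior square) cannot be executed for all eleven rows.
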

\begin{proof}
Schur multipliers of groups of order $p^4$ are taken  from \cite{KO} for $|G'|=p$ and from \cite[page. 4177]{EG} for $|G'|=p^2$. The Schur multipliers of groups of order $p^3$ are well known. So we mainly work for computing  the exterior squares. Tensor squares will then follow easily by Theorem \ref{E}.
We work in the group $\nu(G)$. 

In the class $\Phi_2$, we only work out the exterior square of $\Phi_2(211)b$. The other cases go on the same lines.

\par 

Consider the group $G:=\Phi_2(211)b$.
Since $|\M(G)|=p^2$ and $|\gamma_2(G)| = p$, it follows that $|G\wedge G|=p^3$.
By Lemma \ref{R}(viii), we have
$$[\alpha_2,\gamma^\phi] = [\alpha_1,\alpha,\gamma^\phi]=1, ~\text{as}~ \gamma \in \Z(G). $$
By Lemma \ref{O}  the following identities hold:
\begin{eqnarray*}
&& [\alpha_2,\alpha^\phi] = [\gamma^p,\alpha^\phi]=[\gamma,\alpha^\phi]^p=[\gamma,{(\alpha^p)}^{\phi}]=1,\\
 && [\alpha_2,\alpha_1^\phi] = [\gamma^p,\alpha_1^\phi]=[\gamma,\alpha_1^\phi]^p=[\gamma,{(\alpha_1^p)}^\phi]=1,\\
 &&  [\alpha_1,\alpha^\phi]^p = [\alpha_1^p,\alpha^\phi] =1.
 \end{eqnarray*}
These identities, along with  Proposition \ref{B} and Lemma \ref{S}, imply that $G \wedge G$ is generated by $[\alpha_1,\alpha^\phi], [\gamma,\alpha^\phi], [\gamma,\alpha_1^\phi] $, all of which are of order $p$. Hence 
$$G \wedge G \cong \mathbb{Z}_p^{(3)}.$$

\par

Now we work out the exterior square of the group $G=\Phi_3(1^4)$.
Since $|\M(G)|=p^2$, $|G\wedge G|=p^4$.
By Proposition \ref{B}, $G \wedge G$ is generated by the set
$$\{[\alpha_1,\alpha^\phi], [\alpha_2,\alpha^\phi], [\alpha_2,\alpha_1^\phi],[\alpha_3,\alpha^\phi], [\alpha_3,\alpha_1^\phi], [\alpha_2,\alpha_3^\phi]\}.$$
By  Lemma \ref{R}(viii), we have
$$ [\alpha_2,\alpha_3^\phi]=[\alpha_1,\alpha,\alpha_3^\phi]=1~\text{as}~ \alpha_3 \in \Z(G).$$
Hall-Witt identity yields
\begin{eqnarray*}
 1 &=& [\alpha_2,\alpha,\alpha_1^\phi]^{\alpha^{-1}}[\alpha^{-1},\alpha_1^{-1},\alpha_2^\phi]^{\alpha_1}\hspace{2cm}\\
&=&[\alpha_3,\alpha_1^\phi][\alpha_1\alpha\alpha_2^{-1}\alpha^{-1}\alpha_1^{-1},\alpha_2^{\phi}]^{\alpha_1}\\
&=&[\alpha_3,\alpha_1^\phi][\alpha\alpha_2^{-1}\alpha^{-1},\alpha_2^{\phi}]\\
&=&[\alpha_3,\alpha_1^\phi][\alpha_3\alpha_2^{-1},\alpha_2^{\phi}]\\
&=&[\alpha_3,\alpha_1^\phi][\alpha_3,\alpha_2^\phi][\alpha_2,\alpha_2^\phi]^{-1}\\
&=&[\alpha_3,\alpha_1^\phi][\alpha_2,\alpha_2^\phi]^{-1}.
\end{eqnarray*}
Consequently, $[\alpha_3,\alpha_1^\phi] = [\alpha_2,\alpha_2^\phi] =1$ in $G \wedge G$.
Also, by Lemma \ref{O}, we get
$$[\alpha_3,\alpha^\phi]^p=[\alpha_3^p,\alpha^\phi]=1=[\alpha_2^p,\alpha_1^\phi]=[\alpha_2,\alpha_1^\phi]^p.$$
and
$$[\alpha_2,\alpha^\phi]^p = [\alpha_2^p,\alpha^\phi]=1= [\alpha_1^p,\alpha^\phi]=[\alpha_1,\alpha^\phi]^p .$$
Hence, in view of   Lemma \ref{R}(vi), 
$$G \wedge G \cong \langle[\alpha_1,\alpha^\phi], [\alpha_2,\alpha^\phi], [\alpha_2,\alpha_1^\phi],[\alpha_3,\alpha^\phi]\rangle \cong \mathbb{Z}_p^{(4)}.$$
The remaining cases go on the same lines.
\hfill$\Box$

\end{proof}

We remark that the $G \wedge G$ and $G \otimes G$ for all groups $G$ of order $p^4$ have been computed in  \cite{PNi}. We recompute $G \wedge G$,  for all groups $G$ of order $p^4$, on the way to computing the minimal generating set for $G \wedge G$. 


\section{Classes $\Phi_4, \Phi_5$: Groups of class 2 with $G/G'$ elementary abelian}

Throughout the article, we make calculations in the subgroup $[G, G^\phi]$ of $\nu(G)$ modulo $\nabla(G)$, i.e., we work in $G \wedge G$. For commutator and power calculations, we use Lemma \ref{O} without any further reference.

First we consider extra-special $p$-groups of order $p^5$, for which  we have the following result.
\begin{lemma}
If $G$ is one of the groups $\Phi_5(2111)$ or $\Phi_5(1^5)$, then $\M(G)$ is isomorphic to $\mathbb{Z}_p^{(5)}$ and $G \wedge G$ is isomorphic to $\mathbb{Z}_p^{(6)}$.
\end{lemma}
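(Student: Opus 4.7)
The plan is as follows. Both $\Phi_5(2111)$ and $\Phi_5(1^5)$ are extra-special $p$-groups of order $p^5$, so $G' = \Z(G) \cong \mathbb{Z}_p$ and the nilpotency class is $2$. By Lemma \ref{S}, $G \wedge G$ is abelian, and the short exact sequence $1 \to \M(G) \to G \wedge G \to G' \to 1$ reduces the problem to showing that $|G \wedge G| = p^6$ and that $G \wedge G$ has exponent $p$.

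For the upper bound $|G \wedge G| \le p^6$, I would work inside $[G, G^\phi]/\nabla(G)$. By Proposition \ref{B} the ten basic commutators $[g_i, g_j^\phi]$ with $i > j$ from James's polycyclic generating sequence generate $G \wedge G$. The first step is to kill the four commutators $[\alpha_4, g_k^\phi]$, where $\alpha_4$ generates $G'$. Because the commutator form on $G/\Z(G)$ is a non-degenerate alternating pairing on $\mathbb{F}_p^4$, for each $g_k$ one can exhibit $x, y \in G$ with $[g_k, x] = [g_k, y] = 1$ and $[x, y]$ a non-trivial power of $\alpha_4$; Lemma \ref{R}(viii) then yields $[x, y, g_k^\phi] = 1$, which forces $[\alpha_4, g_k^\phi] = 1$ since $G \wedge G$ is a $p$-group. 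Second, Lemma \ref{O} applied inside $\nu(G)$ (which has class at most $3$ by Lemma \ref{R}(i)) reduces to $[x, y^\phi]^p = [x^p, y^\phi]$. In $\Phi_5(1^5)$ every $\alpha_i^p = 1$, so each of the six surviving commutators has order dividing $p$; in $\Phi_5(2111)$ the only non-trivial $p$-th power among the generators equals $\alpha_4$, and the right-hand side has just been shown to vanish in the first step. This gives $|G \wedge G| \le p^6$ and $G \wedge G$ of exponent $p$.

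For the matching lower bound I would invoke Theorem \ref{G} with $Z = \Z(G) = G'$, producing the exact sequence
$$G^{ab} \otimes Z \xrightarrow{\lambda} \M(G) \xrightarrow{\mu} \M(G/Z) \to G' \cap Z \to 1.$$
Since $G/Z \cong \mathbb{Z}_p^{(4)}$ we have $\M(G/Z) \cong \mathbb{Z}_p^{(6)}$, and $G' \cap Z = G' \cong \mathbb{Z}_p$, so $|\im \mu| = p^5$ and $|\M(G)| \ge p^5$. Combined with the upper bound $|\M(G)| = |G \wedge G|/p \le p^5$, this forces equality, and by the exponent-$p$ property already established we conclude $\M(G) \cong \mathbb{Z}_p^{(5)}$ and $G \wedge G \cong \mathbb{Z}_p^{(6)}$.

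The main obstacle will be the $\Phi_5(2111)$ case, where the element of order $p^2$ has $p$-th power equal to the central generator $\alpha_4$, so the two reductions in the upper bound are logically intertwined: the commutators $[\alpha_4, g_k^\phi]$ must be killed first (and via an appropriate choice of expression of $\alpha_4$ as a commutator of elements that centralise $g_k$), and only then can one conclude $[\alpha, y^\phi]^p = 1$, since this last computation feeds back into a commutator of the first type. Careful sequential bookkeeping using Lemmas \ref{R} and \ref{O} in the right order, in the spirit of the computations carried out in the proof of Theorem \ref{SHHH}, is what makes the argument go through without introducing spurious relations or collapsing $G \wedge G$ below $\mathbb{Z}_p^{(6)}$.
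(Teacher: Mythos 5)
Your argument is correct, but it follows a genuinely different route from the paper. The paper's proof is essentially two citations: it quotes \cite[Theorem 3.3.6(i)]{GK} for the fact that the Schur multiplier of an extra-special group of order $p^{2n+1}$, $n\ge 2$, is elementary abelian of order $p^{2n^2+n-1}$ (giving $\M(G)\cong\mathbb{Z}_p^{(5)}$ directly), and \cite[Corollary 2.3]{NR} for $G\otimes G\cong\mathbb{Z}_p^{(16)}$, after which Theorem \ref{E} strips off $\Gamma(G^{ab})\cong\mathbb{Z}_p^{(10)}$ to leave $G\wedge G\cong\mathbb{Z}_p^{(6)}$. You instead give a self-contained computation in the style of the paper's other lemmas: the upper bound via Proposition \ref{B}, Lemma \ref{R}(viii) applied to the non-degenerate alternating commutator form on $G/\Z(G)$ to kill the commutators $[\beta,g_k^\phi]$ (note that in James's presentation the central generator is $\beta$, not $\alpha_4$ --- a purely notational slip on your part), and Lemma \ref{O} for the exponent; the lower bound via the Ganea sequence of Theorem \ref{G} with $Z=\Z(G)=G'$, which gives $|\M(G)|\ge |\M(\mathbb{Z}_p^{(4)})|/|G'\cap Z|=p^5$. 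Your observation that in $\Phi_5(2111)$ the relations $[\beta,g_k^\phi]=1$ must be established \emph{before} the power computation $[\alpha_1,\alpha_j^\phi]^p=[\alpha_1^p,\alpha_j^\phi]=[\beta,\alpha_j^\phi]$ can be closed off is exactly the right point of care. What the paper's approach buys is brevity; what yours buys is independence from the two external results and uniformity with the techniques used elsewhere in the article, at the cost of having to verify the sandwich $p^5\le|\M(G)|\le p^5$ by hand. Both are sound.
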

\begin{proof}
The groups $G$ are extra-special groups. So it follows from  \cite[Theorem 3.3.6(i)]{GK} that $\M(G)$ is an elementary abelian $p$-group of order $p^5$.
By \cite[Corollary 2.3]{NR}, we have 
$$G \otimes G \cong \mathbb{Z}_p^{(16)}.$$ 
Now by Theorem \ref{E},
$$G \wedge G \cong \mathbb{Z}_p^{(6)},$$
which completes the proof.    \hfill $\Box$

\end{proof}

Now let $G$ be a finite $p$-group of class 2 such that  $G/G'$ and $G'$ are elementary abelian  of order $p^3$ and $p^2$ respectively.  We consider $G/G'$ and $G'$ as vector spaces over $\mathbb{F}_p$, which we denote by $V, W$ respectively. The bilinear map $(-,-) : V \times V \to W$ is defined by 
$$(v_1,v_2)=[g_1,g_2]$$
for  $v_1,v_2 \in V$ such that $v_i=g_iG', i \in \{1,2\}$.

The following construction is from \cite{BE}. Let $X_1$ be the subspace of $V \otimes W$ spanned by all 
$$v_1 \otimes (v_2,v_3) + v_2 \otimes (v_3,v_1) + v_3 \otimes (v_1,v_2)$$
for $v_1, v_2, v_3 \in V$.

Consider a map $f : V \rightarrow W$ given by 
$$f(gG')=g^p$$
 for $g \in G$.
We denote by $X_2$, the subspace spanned by all $v \otimes f(v)$, $v \in V$,  and take $$X:=X_1+X_2.$$

Now consider a homomorpism $\sigma: V\wedge V \rightarrow (V \otimes W)/X$ given by
$$\sigma(v_1 \wedge v_2)= \big(v_1 \otimes f(v_2)+ (_2^p)  v_2 \otimes (v_1,v_2)\big)+X.$$
Then there exists an abelian group $M^*$  admitting a subgroup $N$, isomorphic to  $(V \otimes W)/X$, such that 
$$1 \rightarrow N \rightarrow M^* \xrightarrow{\xi} V \wedge V \rightarrow 1$$
is exact.
Now we consider a homomorphism $\rho:V \wedge V \rightarrow W$ given by
$$\rho(v_1 \wedge v_2)=(v_1,v_2)$$ 
for all $v_1,v_2 \in V.$  Notice that $\rho$ is an epimorphism.
Denote by $M$, the subgroup of $M^*$ containing $N$ such that $M/N \cong \Ker \rho$. Then it follows that  $|M/N|=|V\wedge V|/|W|$, which will be used for calculating the order of $M$  without any further reference.

With the above setting, we have 
\begin{thm}$($\cite[Theorem 3.1]{BE}$)$ \label{thmBE}
 $\M(G) \cong M$.
\end{thm}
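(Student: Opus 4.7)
My plan is to apply Hopf's formula $\M(G) \cong (R \cap F')/[F,R]$ to a carefully chosen free presentation $G = F/R$. Take $F$ free of rank $d(G) = 3$ with generators $x_1, x_2, x_3$ lifting a basis $v_1, v_2, v_3$ of $V$. The normal generators of $R$ can be taken to be: (a) $\gamma_3(F)$ since $G$ has class $2$; (b) $[x_i, x_j]^p$ for $i < j$ since $G'$ is elementary abelian; (c) $x_i^p \tilde w_i^{-1}$ where $\tilde w_i \in F'$ is a lift of $f(v_i) \in W$; and (d) a single relator $\prod [x_i, x_j]^{c_{ij}}$ encoding the unique $\mathbb{F}_p$-linear dependence among the three elements $(v_i, v_j)$ in $W$. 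Set $\bar F := F/[F,R]$ and $A := R/[F,R]$, so that $A$ is central in $\bar F$ and $\M(G) = A \cap \bar F'$. Because $\gamma_3(F) \subseteq R$, we get $\gamma_4(\bar F) = 1$; hence $\bar F$ has class at most $3$, the subgroup $\gamma_3(\bar F)$ is central, and $\bar F'$ is abelian.

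The heart of the argument is to show $\bar F' \cong M^*$ as an extension of $V \wedge V$ by $N$. First I show $\bar F'/\gamma_3(\bar F) \cong V \wedge V$: the three commutators $[x_i, x_j]$ generate it; relation (b) combined with a Hall-Petresco expansion forces $[x_i, x_j]^p \in \gamma_3(\bar F)$; and relation (d) only identifies the relevant product with a central element of $A$, not with $0$ modulo $\gamma_3(\bar F)$, so it contributes a nontrivial element to $\M(G)/\gamma_3(\bar F)$ rather than imposing a new relation in the quotient. Next, $\gamma_3(\bar F)$ is elementary abelian and surjected onto by $V \otimes W$ via $v_k \otimes (v_i, v_j) \mapsto [x_k, [x_i, x_j]]$. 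The kernel of this surjection contains $X_1$ (from the Hall--Witt identity in $F$ taken modulo $\gamma_4(F) \subseteq [F,R]$) and $X_2$ (from $1 = [x_i, x_i^p] = [x_i, \tilde w_i]$ in $\bar F$, which yields $v \otimes f(v) \equiv 0$ for each basis vector and extends to all $v \in V$ by polarisation, applied to $x_i x_j$ together with the Hall--Petresco expansion of $(x_i x_j)^p$).

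The extension class of $1 \to N \to \bar F' \to V \wedge V \to 1$ is pinned down by inspecting $p$-th powers in $\bar F'$: combining relation (c) with $[x_l, x_i^p] = [x_l, x_i]^p [x_l, x_i, x_i]^{\binom{p}{2}}$ yields, for a lift of $v_1 \wedge v_2 \in V \wedge V$, exactly the expression $v_1 \otimes f(v_2) + \binom{p}{2}\, v_2 \otimes (v_1, v_2)$ modulo $X$, which is precisely the formula defining $\sigma$. Hence $\bar F' \cong M^*$ as extensions. The commutator map $\bar F' \to G'$ factors as $\bar F' \cong M^* \xrightarrow{\xi} V \wedge V \xrightarrow{\rho} W$, so $\M(G) = A \cap \bar F'$ corresponds under this isomorphism to the preimage of $\ker \rho$ in $M^*$, namely $M$. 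The main obstacle will be verifying that $X_1 + X_2$ exhausts the kernel of $V \otimes W \twoheadrightarrow \gamma_3(\bar F)$ \emph{and} that the extension class is exactly $\sigma$ rather than some other twisted extension of $V \wedge V$ by $N$; both reduce to careful Hall--Petresco bookkeeping and matching the $\binom{p}{2}$ coefficient across the power and commutator relations.
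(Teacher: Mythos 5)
First, a point of comparison: the paper contains no proof of this statement. Theorem \ref{thmBE} is quoted from \cite[Theorem 3.1]{BE} and is used in Section 3 purely as a black box, so there is no argument in the paper to measure your proposal against; it can only be judged against what a complete proof must contain.

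Your skeleton is the right one and is essentially the route of the original source: Hopf's formula applied to $\bar{F}=F/[F,R]$, the identification of $\gamma_3(\bar{F})$ as a quotient of $V\otimes W$, the Jacobi identity modulo $\gamma_4$ accounting for $X_1$, the relations $x_i^p=\tilde{w}_i$ accounting for $X_2$ (in fact $[x,x^p]=1$ gives $v\otimes f(v)\mapsto 0$ for every $v\in V$ at once, so no polarisation is needed), and the computation $[x_1,x_2]^p=[x_1,\tilde{w}_2]\,[x_2,[x_1,x_2]]^{\binom{p}{2}}$ pinning the extension class to $\sigma$. Two small slips: the fact that $[x_i,x_j]^p\in\gamma_3(\bar{F})$ follows from relation (c) together with the centrality of $A$, not from relation (b), which only places $[x_i,x_j]^p$ in $A\cap\bar{F}'=\M(G)$; and relation (b) is actually redundant given (a) and (c). The genuine gap is the step you defer to your final sentence: showing that $X_1+X_2$ is the \emph{entire} kernel of $V\otimes W\twoheadrightarrow\gamma_3(\bar{F})$. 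Everything you have written establishes only the inclusion $X\subseteq\Ker$, i.e.\ only the upper bound $|\M(G)|\le|M|$, which is the easy direction. The equality is the content of the theorem; it requires an exact determination of $\gamma_3(F)\cap[F,R]$ modulo $\gamma_4(F)$ and $p$-th powers from the normal generators of $R$ (or, equivalently, an explicit construction of a central extension of $G$ realising $(V\otimes W)/X$ inside the derived subgroup, to force the lower bound). This is not ``careful Hall--Petresco bookkeeping'' of the same kind as the computations you did carry out, and without it your argument proves only that $\M(G)$ is a quotient of $M$ --- strictly weaker than the statement, and insufficient for the way the theorem is used in Section 3, where exact orders of $N$ and $M$ are read off to determine $\M(G)$.
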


The following information will be used throughout the rest of this section without further reference.

\begin{remark}
Let $G$ be any group in the  isoclinism class $\Phi_4$.  Consider the natural epimorphism 
$$[G, G^\phi]\rightarrow [G/\Z(G), (G/\Z(G))^\phi].$$ 
Since $G/\Z(G) \wedge G/\Z(G)$ is elementary abelian of order $p^3$, it follows that the elements
$[\alpha_1,\alpha^\phi], [\alpha_2,\alpha^\phi], [\alpha_2,\alpha_1^\phi]$ are non-trivial and independent in $G \wedge G$. Furthermore, by Lemma \ref{S}, $G \otimes G$ is abelian.
\end{remark}

\begin{lemma}
If $G$ is one of the groups $\Phi_4(221)a$, $\Phi_4(221)b$, $\Phi_4\left(221\right)c$,  
$\Phi_4(221)d_{\frac{1}{2}(p-1)}$, $\Phi_4(221)d_r \;\big(r \ne \frac{1}{2}(p-1)\big)$,  
$\Phi_4(221)e$,
$\Phi_4(221)f_0$ or $\Phi_4(221)f_r$, then $\M(G)$ is isomorphic to $\mathbb{Z}_p, \mathbb{Z}_p \times \mathbb{Z}_p, \mathbb{Z}_p$,   $\mathbb{Z}_{p^2}$,   $\mathbb{Z}_p$,
$\mathbb{Z}_p,    \mathbb{Z}_{p^2}$ or $\mathbb{Z}_p$ respectively, and $G \wedge G$ is isomorphic to $\mathbb{Z}_p^{(3)}, \mathbb{Z}_{p^2} \times \mathbb{Z}_p \times \mathbb{Z}_p, \mathbb{Z}_p^{(3)}$,  $\mathbb{Z}_{p^2} \times \mathbb{Z}_p \times \mathbb{Z}_p$,   
$\mathbb{Z}_p^{(3)}$,   $\mathbb{Z}_p^{(3)}, \mathbb{Z}_{p^2} \times \mathbb{Z}_p \times \mathbb{Z}_p$ or $\mathbb{Z}_p^{(3)}$ respectively.
\end{lemma}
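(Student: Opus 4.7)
The plan is to compute $\M(G)$ case-by-case using the Blackburn--Evens construction (Theorem \ref{thmBE}), and then to determine $G \wedge G$ through the exact sequence $\M(G) \hookrightarrow G \wedge G \twoheadrightarrow G'$ arising from the definition of $G \wedge G$. Since $G$ has nilpotency class $2$, Lemma \ref{S} makes $G \wedge G$ abelian, and from $|G'| = p^2$ we read off $|G \wedge G| = |\M(G)|\cdot p^2$, so the order of $G \wedge G$ is pinned down the moment we know $\M(G)$; what remains after that is to determine the direct-sum structure.

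For the Schur multiplier step I would, for each group $G$, identify $V = G/G' \cong \mathbb{F}_p^3$, $W = G' \cong \mathbb{F}_p^2$, and read off from the presentation both the bilinear form $(g_1G', g_2G') = [g_1,g_2]$ and the map $f(gG') = g^p$. A short linear-algebra calculation inside the $6$-dimensional space $V \otimes W$ determines $X = X_1 + X_2$, so $|N| = p^{6 - \dim X}$; combined with $|M/N| = |V\wedge V|/|W| = p$, this yields $|M|$ and hence $|\M(G)|$. For the two groups giving a cyclic multiplier of order $p^2$, namely $\Phi_4(221)d_{(p-1)/2}$ and $\Phi_4(221)f_0$, the defining extension $N \hookrightarrow M^* \twoheadrightarrow V\wedge V$ together with the explicit formula for $\sigma$ exhibits an element of order $p^2$ in $M$; for $\Phi_4(221)b$ the analogous calculation instead produces two independent elements of order $p$, so that $M \cong \mathbb{Z}_p \times \mathbb{Z}_p$.

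For $G \wedge G$ I would work throughout in $[G,G^\phi]$ modulo $\nabla(G)$, using Proposition \ref{B} to reduce the generating set to the $\binom{5}{2}=10$ commutators $[\alpha_i, \alpha_j^\phi]$ coming from a polycyclic generating sequence of $G$. Lemma \ref{R}(viii) kills every such commutator one of whose arguments is central; the remaining ones are reduced by the Hall--Witt identity together with Lemma \ref{R}(iii)--(vi), exactly along the lines of the $\Phi_3(1^4)$ calculation in the proof of Theorem \ref{SHHH}. Lemma \ref{O}, in the form $[\alpha_i, \alpha_j^\phi]^p = [\alpha_i^p, \alpha_j^\phi]$, converts the power relations $\alpha_i^{(p)} = (\text{an element of } G')$ from each presentation into concrete conditions on the orders of the surviving generators. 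The three commutators $[\alpha_1,\alpha^\phi]$, $[\alpha_2,\alpha^\phi]$, $[\alpha_2,\alpha_1^\phi]$ are already known from the Remark to be independent and of order at least $p$.

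The main obstacle is the four groups with $|\M(G)| = p^2$: here $|G \wedge G| = p^4$ and the order count alone does not separate $\mathbb{Z}_{p^2} \times \mathbb{Z}_p^{(2)}$ from $\mathbb{Z}_p^{(4)}$. This is especially delicate for $\Phi_4(221)b$, where $\M(G)$ itself is elementary abelian, so the extension $\M(G) \hookrightarrow G \wedge G \twoheadrightarrow G'$ must be shown to be non-split. In each of the three $\mathbb{Z}_{p^2}\times\mathbb{Z}_p^{(2)}$ cases I would exhibit a single generator $[\alpha_i,\alpha_j^\phi]$ whose $p$-th power $[\alpha_i^p,\alpha_j^\phi]=[\alpha_k,\alpha_j^\phi]$ is a visibly nontrivial element of the kernel of the natural map $G\wedge G \to G'$, giving an explicit element of order $p^2$; in the four $\mathbb{Z}_p^{(3)}$ cases the three independent generators together with the order count $|G \wedge G| = p^3$ finish the argument immediately.
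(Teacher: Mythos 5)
Your overall strategy coincides with the paper's: Blackburn--Evens (Theorem \ref{thmBE}) to get $|\M(G)|$ from $\dim X$, then explicit computation in $[G,G^\phi]$ modulo $\nabla(G)$ using Proposition \ref{B}, Lemma \ref{R}, Hall--Witt and Lemma \ref{O} for $G\wedge G$, with the order count $|G\wedge G|=|\M(G)|\,p^2$ and the three independent generators from the Remark closing each case. Two points, however.

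First, the claim that Lemma \ref{R}(viii) ``kills every such commutator one of whose arguments is central'' is false and, taken literally, would wreck exactly the four cases with $|\M(G)|=p^2$. For the $\Phi_4$ presentations, Lemma \ref{R}(viii) only gives $[\beta_2,\beta_1^\phi]=[\alpha_1,\alpha,\beta_1^\phi]=1$; the seven generators $[\beta_i,x^\phi]$ with $x\in\{\alpha,\alpha_1,\alpha_2\}$ have a central argument but are in general nontrivial --- e.g.\ in $\Phi_4(221)b$ one has $[\beta_1,\alpha^\phi]=[\alpha_2^p,\alpha^\phi]=[\alpha_2,\alpha^\phi]^p\neq 1$, and this is precisely what produces the $\mathbb{Z}_{p^2}$ factor. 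These commutators must instead be rewritten via the power relations $\beta_i=\alpha_j^{\pm p}$ (or $\alpha^p$) together with Lemma \ref{O}, which is what the paper does and what your own final paragraph implicitly does; so the slip is self-correcting, but as stated it is wrong. Relatedly, ``visibly nontrivial'' is not an argument for the order $p^2$; the clean justification is the order count you already set up (an abelian group of order $p^4$ generated by three independent elements of orders dividing $p^2,p,p$ must be $\mathbb{Z}_{p^2}\times\mathbb{Z}_p\times\mathbb{Z}_p$).

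Second, your route to the isomorphism type (as opposed to the order) of $\M(G)$ in the cases $\Phi_4(221)b$, $\Phi_4(221)d_{\frac{1}{2}(p-1)}$, $\Phi_4(221)f_0$ genuinely differs from the paper's. You propose to detect elements of order $p^2$ in $M$ from $\sigma$ and the extension $1\to N\to M^*\to V\wedge V\to 1$. This is legitimate (and one can check, e.g., that $\sigma(\alpha_1G'\wedge\alpha_2G')=\alpha_1G'\otimes\alpha_2^p+X$ is nonzero for $d_{\frac{1}{2}(p-1)}$ but lies in $X_2$ for $b$), but it relies on the precise relation between $\sigma$ and $p$-th powers in $M^*$ from the original Blackburn--Evens paper, which the summary reproduced here defines but never uses. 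The paper instead extracts the structure of $\M(G)$ as a byproduct of the wedge computation, identifying $\M(G)$ with the kernel of $G\wedge G\to G'$: thus $\M(\Phi_4(221)d_{\frac{1}{2}(p-1)})=\langle[\alpha_1,\alpha_2^\phi]\rangle\cong\mathbb{Z}_{p^2}$ since $[\alpha_1,\alpha_2]=1$, while $\M(\Phi_4(221)b)=\langle[\alpha_2,\alpha^\phi]^p,[\alpha_1,\alpha_2^\phi]\rangle\cong\mathbb{Z}_p\times\mathbb{Z}_p$. The paper's route is self-contained given what is stated; yours imports unstated machinery without gaining anything here.
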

\begin{proof}
For the group $G=\Phi_4(221)a$, notice that $X_1$ is spanned by 
$$(\alpha_1G' \otimes \alpha^p - \alpha_2G' \otimes \alpha_1^p)$$ and $\dim X_1=1$. Observe that $\alpha G' \otimes \alpha^p, \alpha_1 G' \otimes \alpha_1^p \in X_2$ and $(\alpha G'+uG')\otimes \alpha^p, (\alpha_1 G'+uG') \otimes \alpha_1^p \in X_2$ for $uG' \in \Ker f$. So, we have $uG' \otimes \alpha^p, uG' \otimes \alpha_1^p \in X_2$ for all $uG' \in \Ker f$. This implies $\alpha_2 G' \otimes \alpha^p, \alpha_2 G' \otimes \alpha_1^p \in X_2$.
Now a general element of $X_2$ is of the form
\begin{eqnarray*}
(p_1\alpha G'+p_2\alpha_1G' + p_3\alpha_2G')\otimes (p_1\alpha^p+p_2\alpha_1^p) &=& p_1^2\alpha G' \otimes \alpha^p + p_2^2\alpha_1 G' \otimes \alpha_1^p\\
&& +p_1p_2(\alpha G' \otimes \alpha_1^p + \alpha_1G' \otimes \alpha^p)\\
&& +p_3p_1(\alpha_2 G' \otimes \alpha^p)\\
&& +p_3p_2(\alpha_2 G' \otimes \alpha_1^p).
\end{eqnarray*}
This shows that, $X_2$ is spanned by the set
$$\{ \alpha G' \otimes \alpha^p, \alpha_1 G' \otimes \alpha_1^p, \alpha_2G' \otimes \alpha^p, \alpha_2G' \otimes \alpha_1^p, (\alpha G' \otimes \alpha_1^p+ \alpha_1G' \otimes \alpha^p)\}.$$ 
Hence, $\dim X_2=5$.
Observe that $(\alpha_1G' \otimes \alpha^p - \alpha_2G' \otimes \alpha_1^p)$ is not contained in $X_2$. Thus,
$\dim X=6$, and consequently,   $|N|=1, |M|=p$. Now by Theorem \ref{thmBE}, we have
$$\M(G) \cong \mathbb{Z}_p,$$ which gives
 $$|G \wedge G|=p^3.$$
Hence 
$$G \wedge G=\langle [\alpha_1,\alpha^\phi], [\alpha_2,\alpha^\phi], [\alpha_1,\alpha_2^\phi] \rangle \cong \mathbb{Z}_p^{(3)}.$$

\par

For the group $G=\Phi_4(221)b$, we see that $X_1$ is spanned by $$(\alpha_1G' \otimes \alpha^p - \alpha_2G' \otimes \alpha_2^p)$$ and $\dim X_1=1$.
As described in the preceding case, the subspace $X_2$ is spanned by the set
$$\{\alpha G' \otimes \alpha^p, \alpha_2 G' \otimes \alpha_2^p, \alpha_1G' \otimes \alpha^p, \alpha_1G' \otimes \alpha_2^p, (\alpha G' \otimes \alpha_2^p+ \alpha_2G' \otimes \alpha^p)\},$$ 
and $\dim X_2=5$.
Observe that $X_1 \subset X_2$, so  $\dim X=5$ and  $|N|=p$. By Theorem \ref{thmBE} $|M|=|\M(G)|=p^2$. Hence, $|G \wedge G|=p^4$.

By Lemma \ref{R}(viii), $$[\beta_1,\beta_2^\phi]=[\alpha_1,\alpha,\beta_2^\phi]=1.$$
For $i \in \{1,2\}, x \in \{\alpha,\alpha_1,\alpha_2\}$, we have (by Lemma \ref{O}),
\begin{eqnarray*}
&& [\beta_i, x^{\phi}]^p=[\beta_i^p,x]=1,\hspace{2cm}\\
&&[\beta_2,x^\phi]= [\alpha^p,x^\phi]=[\alpha,x^\phi]^p,\\
&&[\beta_1,x^\phi]=[\alpha_2^p,x^\phi]=[\alpha_2,x^\phi]^p, \\
&& [\alpha,\alpha_1^\phi]^p=[\alpha,{(\alpha_1^p)^\phi}]=1=[\alpha_2,{(\alpha_1^p)^\phi}]=[\alpha_2,\alpha_1^\phi]^p,\\
&& [\alpha_2,\alpha^\phi]^{p^2}=[\alpha_2^{p^2},\alpha^\phi]=1.
\end{eqnarray*}
Hence, it follows that 
$$G \wedge G =\langle [\alpha_2,\alpha^\phi], [\alpha_1,\alpha^\phi], [\alpha_1,\alpha_2^\phi]\rangle \cong \mathbb{Z}_{p^2} \times \mathbb{Z}_p \times \mathbb{Z}_p$$
and
$$\M(G) \cong \langle [\alpha_2,\alpha^\phi]^p, [\alpha_1,\alpha_2^\phi]\rangle  \cong\mathbb{Z}_p \times \mathbb{Z}_p.$$

\par

For the group $G=\Phi_4(221)c$, we see that $X_1$ is spanned by 
$$(\alpha_1G' \otimes \alpha_2^p - \alpha_2G' \otimes \alpha_1^p),$$
and $\dim X_1=1$. 
It follows that $X_2$ is spanned by the set
$$\{\alpha_1G' \otimes \alpha_1^p, \alpha_2 G' \otimes \alpha_2^p, \alpha G' \otimes \alpha_1^p, \alpha G' \otimes \alpha_2^p, (\alpha_1 G' \otimes \alpha_2^p+ \alpha_2G' \otimes \alpha_1^p)\}$$ and $\dim X_2=5$.
So $\dim X=6$ and $|N|=1, |M|=p$. Hence 
$$\M(G) \cong \mathbb{Z}_p$$ and
$$G \wedge G\cong\langle [\alpha_1,\alpha^\phi], [\alpha_2,\alpha^\phi], [\alpha_1,\alpha_2^\phi] \rangle \cong \mathbb{Z}_p^{(3)}.$$

\par

For the group $G=\Phi_4(221)d_r$, $X_1$ is spanned by 
$$(\alpha_1G' \otimes \alpha_2^p - \alpha_2G' \otimes \beta_1)$$ and $\dim X_1=1$.
The space $X_2$ is spanned by the set
$$\{\alpha_1G' \otimes \beta_1^k, \alpha_2 G' \otimes \alpha_2^p, \alpha G' \otimes \beta_1^k, \alpha G' \otimes \alpha_2^p, (\alpha_1 G' \otimes \alpha_2^p+ \alpha_2G' \otimes \beta_1^k)\}$$
and $\dim X_2=5$. 

For $r=\frac{1}{2}(p-1)$, it follows that in the presentation of $G$ $k \equiv  -1$ (mod $p$).
So we have $X_1 \subset X_2$. Hence $\dim X=5$, and therefore $|N|=p,$ $|\M(G)|=p^2$ and $|G \wedge G|=p^4$.
By Lemma \ref{R}(viii), $$[\beta_1,\beta_2^\phi]=[\alpha_1,\alpha,\beta_2^\phi]=1.$$
For $i \in \{1,2\}, x \in \{\alpha,\alpha_1,\alpha_2\}$, we have
\begin{eqnarray*}
&& [\beta_i, x^{\phi}]^p=[\beta_i^p,x]=1,\hspace{2cm}\\
&&[\beta_2,x^\phi]= [\alpha_2^p,x^\phi]=[\alpha_2,x^\phi]^p,\\
&&[\beta_1,x^\phi]=[\alpha_1^{-p},x^\phi]=[\alpha_1^{-1},x^\phi]^p=[\alpha_1,x^\phi]^{-p}, \\
&& [\alpha,\alpha_1^\phi]^p=[\alpha^p,\alpha_1^\phi]=1=[\alpha_2,{(\alpha^p)^\phi}]=[\alpha_2,\alpha^\phi]^p,\\
&& [\alpha_1,\alpha_2^\phi]^{p^2}=[\alpha_1^{p^2},\alpha_2^\phi]=1.
\end{eqnarray*}
Hence, it follows that  $$G \wedge G= \langle [\alpha_1,\alpha_2^\phi] , [\alpha_1,\alpha^\phi], [\alpha_2,\alpha^\phi]\rangle \cong \mathbb{Z}_{p^2} \times \mathbb{Z}_p \times \mathbb{Z}_p$$ 
and, since $[\alpha_1, \alpha_2] = 1$, 
 $$\M(G) \cong \langle [\alpha_1,\alpha_2^\phi]\rangle \cong  \mathbb{Z}_{p^2}.$$ 

For $r \neq \frac{1}{2}(p-1)$, $X_1 \cap X_2= \emptyset$, so  $\dim X=6$ and therefore $|N|=1$. Hence 
$$\M(G) \cong \mathbb{Z}_p$$ and
$$G \wedge G=\langle [\alpha_1,\alpha^\phi], [\alpha_2,\alpha^\phi], [\alpha_1,\alpha_2^\phi] \rangle \cong \mathbb{Z}_p^{(3)} .$$

\par

For the group $G=\Phi_4(221)e$, $X_1$ is spanned by $$(\alpha_1G' \otimes \beta_2 - \alpha_2G' \otimes \beta_1)$$ and $\dim X_1=1$.
The subspace $X_2$ is spanned by the set
\begin{eqnarray*}
&&\{\alpha_1 G' \otimes \beta_2^{-\frac{1}{4}}, (\alpha_2 G' \otimes \beta_1+\alpha_2G' \otimes \beta_2), \alpha G' \otimes \beta_2^{-\frac{1}{4}},(\alpha G' \otimes \beta_1+ \alpha G' \otimes \beta_2), \\
&&\hspace{4 cm} (\alpha_1 G' \otimes \beta_1+\alpha_1 G' \otimes \beta_2+ \alpha_2G' \otimes \beta_2^{-\frac{1}{4}})\}
\end{eqnarray*} 
and $\dim X_2=5$. So $\dim X=6$. Therefore $|N|=1$ and $|M|=|\M(G)|=p$. Hence
$$\M(G) \cong \mathbb{Z}_p$$
and
$$G \wedge G=\langle [\alpha_1,\alpha^\phi], [\alpha_2,\alpha^\phi], [\alpha_1,\alpha_2^\phi] \rangle \cong \mathbb{Z}_p^{(3)}.$$

\par

For the group $G=\Phi_4(221)f_0$, $X_1$ is spanned by $$(\alpha_1G' \otimes \beta_2 - \alpha_2G' \otimes \beta_1)$$ and $\dim X_1=1$.
The subspace $X_2$ is spanned by $$\{\alpha_1 G' \otimes \beta_2, \alpha_2 G' \otimes \beta_1^\nu, \alpha G' \otimes \beta_2, \alpha G' \otimes \beta_1^\nu, (\alpha_1 G' \otimes \beta_1^\nu+ \alpha_2G' \otimes \beta_2)\}$$ and $\dim X_2=5$.
Observe that  $X_1 \subset X_2$, so $\dim X=5$. Therefore $|N|=p, |M|=|\M(G)|=p^2$. 
By Lemma \ref{R}(viii), $$[\beta_1,\beta_2^\phi]=[\alpha_1,\alpha,\beta_2^\phi]=1.$$
For $i \in \{1,2\}, x \in \{\alpha,\alpha_1,\alpha_2\}$, we have
\begin{eqnarray*}
&& [\beta_i, x^{\phi}]^p=[\beta_i^p,x^\phi]=1,\hspace{2cm}\\
&&[\beta_2,x^\phi]= [\alpha_1^p,x^\phi]=[\alpha_1,x^\phi]^p,\\
&&[\beta_1,x^\phi]=[\alpha_2^{p\nu^{-1}},x^\phi]=[\alpha_2,x^\phi]^{p\nu^{-1}}, \\
&& [\alpha,\alpha_1^\phi]^p=[\alpha^p,{\alpha_1^\phi}]=1=[\alpha_2,{(\alpha^p)^\phi}]=[\alpha_2,\alpha^\phi]^p.
\end{eqnarray*}
Hence 
$$G \wedge G=\langle [\alpha_1,\alpha_2^\phi], [\alpha_2,\alpha^\phi], [\alpha_1,\alpha^\phi] \rangle \cong \mathbb{Z}_{p^2} \times \mathbb{Z}_p \times \mathbb{Z}_p$$
and, since $[\alpha_1, \alpha_2] = 1$, 
$$\M(G) \cong \langle [\alpha_1,\alpha_2^\phi] \rangle \cong \mathbb{Z}_{p^2}.$$

\par

On the same lines, we can prove that  $$\M(\Phi_4(221)f_r) \cong \mathbb{Z}_p$$ and
$$\Phi_4(221)f_r \wedge \Phi_4(221)f_r=\langle [\alpha_1,\alpha^\phi], [\alpha_2,\alpha^\phi], [\alpha_1,\alpha_2^\phi] \rangle \cong \mathbb{Z}_p^{(3)}.$$
This completes the proof of the lemma.
\hfill$\Box$

\end{proof}

\begin{lemma}
If $G$ is one of the groups $\Phi_4(2111)a, \Phi_4(2111)b, \Phi_4(2111)c$ or $ \Phi_4(1^5)$, then $\M(G)$ is isomorphic to $\mathbb{Z}_p^{(3)},\mathbb{Z}_p^{(3)},\mathbb{Z}_p^{(3)}$ or $\mathbb{Z}_p^{(6)}$ respectively, and $G \wedge G$ is isomorphic to $\mathbb{Z}_p^{(5)},\mathbb{Z}_p^{(5)},\mathbb{Z}_p^{(5)}$ or $\mathbb{Z}_p^{(8)}$respectively.
\end{lemma}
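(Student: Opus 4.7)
The plan is to follow the method of the preceding lemma: apply the Blackburn-Evens result (Theorem \ref{thmBE}) to compute $\M(G)$, and then use the identity $|G\wedge G|=|\M(G)||G'|$ together with Proposition \ref{B} and Lemma \ref{O} to determine the abelian structure of $G\wedge G$.

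For each group I would first set $V=G/G'\cong\mathbb{Z}_p^{(3)}$ and $W=G'\cong\mathbb{Z}_p^{(2)}$, read off the commutator form $(-,-)\colon V\times V\to W$ and the $p$-th power map $f\colon V\to W$ from the presentation in \cite{RJ} (for $p$ odd and $G$ of class $2$, $f$ is linear), and compute the subspaces $X_1,X_2\subseteq V\otimes W$. The Jacobi subspace $X_1$ is at most one-dimensional, since $V$ has only a single basic triple and the defining expression is alternating in its three arguments; the subspace $X_2$ is spanned by the diagonal terms $v_i\otimes f(v_i)$ together with the polarized combinations $v_i\otimes f(v_j)+v_j\otimes f(v_i)$, so its dimension is controlled by the rank of $f$. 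For $\Phi_4(1^5)$ every polycyclic generator has order $p$, so $f\equiv 0$ and $X_2=0$; computing $\dim X_1$ will give $|\M(G)|=p^6$. For each $\Phi_4(2111)$ group the map $f$ has rank one, and checking whether $X_1\subseteq X_2$ will give $|\M(G)|=p^3$; together with $|G'|=p^2$, this pins down $|G\wedge G|$ as $p^8$ or $p^5$.

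Next, by Proposition \ref{B} the group $G\wedge G$ is generated by the ten commutators $[\alpha_i,\alpha_j^\phi]$, $i>j$, where $\alpha_1,\ldots,\alpha_5$ is a polycyclic generating sequence. Any commutator with one entry in $G'\le Z(G)$ can be rewritten, using that $G'$ is generated by commutators of the remaining $\alpha_k$, in the form $[\beta,\gamma,\alpha_j^\phi]$ and is therefore trivial by Lemma \ref{R}(viii); this cuts the generating set down to exactly the expected size. For the orders, Lemma \ref{O} gives $[\alpha_i,\alpha_j^\phi]^p=[\alpha_i^p,\alpha_j^\phi]$, which vanishes because either $\alpha_i^p=1$ or $\alpha_i^p\in G'\le Z(G)$, the latter being killed by the same Lemma \ref{R}(viii) argument applied to the central commutator $\alpha_i^p$. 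Thus every surviving generator has order $p$, and since $G\wedge G$ is abelian by Lemma \ref{S}, the order count alone forces the claimed elementary abelian structure.

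The main obstacle I expect is the combinatorial bookkeeping for $\Phi_4(1^5)$, where one needs to verify that eight of the ten basic commutators remain independent in $G\wedge G$: the Blackburn-Evens step supplies the lower bound $|G\wedge G|\ge p^8$, so once the Lemma \ref{R}(viii) reductions eliminate the expected two commutators, the remaining eight must be independent by a straightforward order count. The three $\Phi_4(2111)$ cases are more delicate on the Blackburn-Evens side, since the inclusion $X_1\subseteq X_2$ depends on the precise form of the $p$-th power relation and is exactly what distinguishes the subscripts $a$, $b$, $c$; however, once $|\M(G)|$ is determined the commutator calculus proceeds uniformly across the three cases.
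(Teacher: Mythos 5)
Your overall strategy coincides with the paper's (Theorem \ref{thmBE} for $|\M(G)|$, then commutator calculus in $\nu(G)$ plus an order count for $G\wedge G$), and the Blackburn--Evens part of your outline is essentially correct. The gap lies in how you invoke Lemma \ref{R}(viii). That lemma kills $[g_1,g_2,g^\phi]$ only when $g$ commutes with \emph{both} $g_1$ and $g_2$; it does not make every commutator $[c,x^\phi]$ with $c\in G'\le \Z(G)$ vanish. If it did, then for $\Phi_4(2111)a$ the group $G\wedge G$ would be generated by $[\alpha_1,\alpha^\phi],[\alpha_2,\alpha^\phi],[\alpha_2,\alpha_1^\phi]$ alone, each of order dividing $p$, forcing $|G\wedge G|\le p^3$ and contradicting $|G\wedge G|=p^5$. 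In fact the only such commutator killed by Lemma \ref{R}(viii) here is $[\beta_1,\beta_2^\phi]=[\alpha_1,\alpha,\beta_2^\phi]$ (because the central element $\beta_2$ commutes with $\alpha$ and $\alpha_1$); the commutators $[\beta_i,\alpha^\phi]$, $[\beta_i,\alpha_j^\phi]$ survive and some of them are among the independent generators. So your ``cuts the generating set down to exactly the expected size'' is false --- though, as your last sentence correctly observes, the final identification only needs abelianness, the order, and the exponent, so this particular error is harmless.

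The damaging instance of the same misreading is in your exponent argument. For $\Phi_4(2111)b$ one has $\alpha_1^p=\beta_1=[\alpha_1,\alpha]\ne 1$, so $[\alpha_1,\alpha^\phi]^p=[\alpha_1^p,\alpha^\phi]=[\beta_1,\alpha^\phi]=[\alpha_1,\alpha,\alpha^\phi]$, and Lemma \ref{R}(viii) does \emph{not} apply because $[\alpha,\alpha_1]\ne 1$; the same happens for $\Phi_4(2111)c$ with $\alpha_2^p=\beta_1$. Thus precisely in the two cases where ``every generator has order $p$'' is not automatic, your justification fails, and without it the order count cannot exclude $G\wedge G\cong\mathbb{Z}_{p^2}\times\mathbb{Z}_p^{(3)}$. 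That this is a real danger, not a formality, is shown by $\Phi_4(221)b$ in the preceding lemma: there $\alpha_2^p=\beta_1\in G'\le\Z(G)$, your argument would ``prove'' $[\alpha_2,\alpha^\phi]^p=[\beta_1,\alpha^\phi]=1$, yet the paper shows $[\alpha_2,\alpha^\phi]$ has order $p^2$ and $G\wedge G\cong\mathbb{Z}_{p^2}\times\mathbb{Z}_p\times\mathbb{Z}_p$. For $\Phi_4(2111)b$ and $\Phi_4(2111)c$ you therefore need an actual proof that $[\beta_1,\alpha^\phi]$ (and its companions) vanish --- e.g.\ a Hall--Witt computation or a passage to suitable quotients --- before the order count can finish the argument. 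The same caveat applies to your remark on $\Phi_4(1^5)$: only one of the two needed relations comes from Lemma \ref{R}(viii); the other is a Hall--Witt/Jacobi relation, although there the conclusion is indeed saved by the order count since all polycyclic generators have order $p$.
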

\begin{proof}
For the group $G=\Phi_4(2111)a$, $X_1$ is spanned by the element
$$(\alpha_1G' \otimes \alpha^p - \alpha_2G' \otimes \beta_1)$$ and $\dim X_1=1$. The space $X_2$ is spanned by the set
$$\{\alpha G' \otimes \alpha^p, \alpha_1 G' \otimes \alpha^p, \alpha_2 G' \otimes \alpha^p\}$$ and $\dim X_2=3$. So $\dim X=4$. Therefore  $|N|=p^2$, $|\M(G)|=|M|=p^3$ and $|G \wedge G|=p^5$.
By Lemma \ref{R}(viii), 
$$[\beta_1,\beta_2^\phi]=[\alpha_1,\alpha,\beta_2^\phi]=1.$$

For $i \in \{1,2\}, x \in \{\alpha,\alpha_1,\alpha_2\}$, we have 
\begin{eqnarray*}
&& [\beta_i, x^{\phi}]^p=[\beta_i^p,x]=1, \\
&&[\alpha_2,x^\phi]^p=[\alpha_2^p,x^\phi]=1=[\alpha_1^p,\alpha^\phi]=[\alpha_1,\alpha^\phi]^p.
\end{eqnarray*}
Therefore every generator of $G \wedge G$ is of order at most $p$.
Hence 
$$G\wedge G  \cong \mathbb{Z}_p^{(5)}$$
and
$$\M(G)\cong \mathbb{Z}_p^{(3)}.$$

\par

In the remaining three cases, the proof  goes on the same lines.
\hfill $\Box$

\end{proof}

\section{Groups of maximal class}
\begin{lemma}
If $G$ is one of the groups $\Phi_9(2111)a,\Phi_9(2111)b_r, \Phi_{10}(2111)a_r$ or $\Phi_{10}(2111)b_r$, then $\M(G)$ is isomorphic to $\mathbb{Z}_p$ and $G \wedge G$ is isomorphic to $ \mathbb{Z}_p^{(4)}$.  
\end{lemma}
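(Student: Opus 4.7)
The plan is to compute $G\wedge G$ explicitly inside $\nu(G)/\nabla(G)$ and read off $\M(G)$ as the kernel of the natural surjection $G\wedge G\twoheadrightarrow G'$. All four groups are of maximal class, so they have nilpotency class $4$, $G^{ab}\cong \mathbb{Z}_p\times\mathbb{Z}_p$, $|G'|=p^3$, and $\gamma_4(G)=Z(G)$ has order $p$. Since the claimed answers force $|G\wedge G|=|\M(G)|\cdot|G'|=p\cdot p^3=p^4$, the task splits into (a) bounding $|\M(G)|$ from above by $p$ and (b) exhibiting four independent generators of $G\wedge G$, each of order $p$.

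For the upper bound I would apply Theorem \ref{J'} with $N=\langle\alpha_1,\alpha_2,\alpha_3,\alpha_4\rangle$, a normal subgroup of index $p$ for which $G/N$ is cyclic of order $p$. Read off from James's presentation $N$ is one of the groups of order $p^4$ whose Schur multiplier is listed in Theorem \ref{SHHH}; a short check shows $|\M(N)|\cdot|N/N'|\le p^3$, from which Theorem \ref{J'} forces $|\M(G)|\le p^{?}$, and a second application of Theorem \ref{2} with $K=\gamma_4(G)$ (central of order $p$, contained in $G'$) tightens this to $|\M(G)|\le p$ once one knows $\M(G/\gamma_4(G))$ (a group of order $p^4$, so again covered by Theorem \ref{SHHH}).

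For the lower bound I would work in $\nu(G)/\nabla(G)$, following exactly the template used for $\Phi_3(1^4)$ in the proof of Theorem \ref{SHHH}. By Proposition \ref{B}, $G\wedge G$ is generated by the ten commutators $[\alpha_i,\alpha_j^\phi]$, $i>j$, drawn from the polycyclic sequence $\alpha,\alpha_1,\alpha_2,\alpha_3,\alpha_4$. Since $\alpha_4\in Z(G)$, Lemma \ref{R}(viii) kills $[\alpha_4,\alpha_j^\phi]$ for every $j\ge 1$; writing $\alpha_4=[\alpha_3,\alpha]$ and using Lemma \ref{R}(iii) also handles $[\alpha_4,\alpha^\phi]$. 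A Hall--Witt computation of the form
\[
1=[\alpha_2,\alpha,\alpha_1^\phi]^{\alpha^{-1}}\,[\alpha^{-1},\alpha_1^{-1},\alpha_2^\phi]^{\alpha_1}
\]
(together with its analogues involving $\alpha_3$) will eliminate $[\alpha_3,\alpha_1^\phi]$, $[\alpha_3,\alpha_2^\phi]$ and $[\alpha_2,\alpha_2^\phi]$ in the same way as in the $\Phi_3(1^4)$ argument. That leaves the four candidates
\[
[\alpha_1,\alpha^\phi],\ [\alpha_2,\alpha^\phi],\ [\alpha_2,\alpha_1^\phi],\ [\alpha_3,\alpha^\phi],
\]
which project onto a basis of $G'/\gamma_4(G)\oplus\gamma_4(G)$ and hence are independent modulo the obvious relations.

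To finish, each surviving generator must be shown to have order exactly $p$. Using Lemma \ref{O}, one expands $[\alpha_1,\alpha^\phi]^p=[\alpha_1^p,\alpha^\phi]\cdot(\text{correction terms in higher commutators})$; the correction terms land in $[G,\gamma_3(G)]$ or $\gamma_5(G)=1$ and therefore vanish in $G\wedge G$ by Lemma \ref{R}(viii), and $\alpha_1^p$ from James's presentation lies in a subgroup on which $[-,\alpha^\phi]$ is already trivial. The same template handles the other three. The principal obstacle is that $\Phi_9(2111)b_r$, $\Phi_{10}(2111)a_r$ and $\Phi_{10}(2111)b_r$ carry a parameter $r$ (and in one case a factor $\nu$) multiplying $\alpha_4$ inside the power relations; one must verify case-by-case that the $p$-th power relations still land inside $\gamma_4(G)$ rather than producing a term that would promote one of the four generators to order $p^2$. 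Once this arithmetic check is done uniformly, counting yields $G\wedge G\cong\mathbb{Z}_p^{(4)}$, and the kernel of $G\wedge G\twoheadrightarrow G'$ is the remaining $\mathbb{Z}_p=\M(G)$.
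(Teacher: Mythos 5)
Your reduction of $G\wedge G$ to the four generators $[\alpha_1,\alpha^\phi]$, $[\alpha_2,\alpha^\phi]$, $[\alpha_2,\alpha_1^\phi]$, $[\alpha_3,\alpha^\phi]$, each of order at most $p$, matches the paper's computation in $\nu(G)$ (Lemma \ref{R}(viii) for $\alpha_4\in\Z(G)$, Hall--Witt to relate $[\alpha_3,\alpha_1^\phi]$, $[\alpha_2,\alpha_3^\phi]$, $[\alpha_4,\alpha_1^\phi]$, and the power relations to kill them). But that computation only yields the \emph{upper} bound $|G\wedge G|\le p^4$, and the two arguments you offer for the complementary bounds both fail.

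First, the proposed upper bound on $|\M(G)|$ does not work. For the $\Phi_9$ groups the subgroup $N=\langle\alpha_1,\alpha_2,\alpha_3,\alpha_4\rangle$ is abelian of order $p^4$, so $|\M(N)|\,|N/N'|$ is $p^6\cdot p^4$, not $\le p^3$, and Theorem \ref{J'} gives nothing useful. Moreover Theorem \ref{2}(i) reads $|\M(G/K)| \mid |\M(G)||G'\cap K|$: it bounds the multiplier of the \emph{quotient} by that of $G$, i.e.\ it is a lower-bound tool for $|\M(G)|$, so it cannot ``tighten'' an upper bound. (An upper bound from a central quotient would require Theorem \ref{J}, which here only gives $|\M(G)|\le p^3$.)

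Second, the independence claim for the four generators is unsupported. Their images in $G'$ under $G\wedge G\twoheadrightarrow G'$ span a group of order only $p^3$ (for $\Phi_9$ the element $[\alpha_2,\alpha_1^\phi]$ maps to $[\alpha_2,\alpha_1]=1$), so the projection cannot certify that all four are independent; the one lying in $\ker(G\wedge G\to G')=\M(G)$ is exactly the element whose nontriviality is the point of the lemma, and no argument is given for it. The missing ingredient is the paper's lower bound: apply Theorem \ref{2}(i) with $K=\Z(G)=\gamma_4(G)$, note that $G/\Z(G)\cong\Phi_3(1^4)$ with $\M(G/\Z(G))\cong\mathbb{Z}_p^{(2)}$ by Theorem \ref{SHHH}, so $p^2$ divides $|\M(G)|\cdot p$ and hence $|\M(G)|\ge p$, i.e.\ $|G\wedge G|\ge p^4$. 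Combining this with your generator computation closes the argument; as written, your proposal establishes neither inequality.
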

\begin{proof}
For the groups $G$ under consideration, taking $K=\Z(G)$, in Theorem \ref{2}(i), we get $p \le \M(G)$.  Thus, $|G \wedge G| \geq p^4$. 
\par

If $G$ is either $\Phi_9(2111)a$ or $\Phi_9(2111)b_r$, then by Lemma \ref{R}(viii),
 $$[\alpha_2,\alpha_4^\phi]=[\alpha_1,\alpha,\alpha_4^\phi]=1=[\alpha_2,\alpha,\alpha_4^\phi]=[\alpha_3,\alpha_4^\phi], $$ as $\alpha_4 \in \Z(G)$.
Now we have
\begin{eqnarray*}
[\alpha^{-1},\alpha_1^{-1},\alpha_2^\phi]^{\alpha_1} &=& [\alpha_1\alpha \alpha_2^{-1} \alpha^{-1}\alpha_1^{-1},\alpha_2^\phi]^{\alpha_1}=[\alpha \alpha_2^{-1}\alpha^{-1},\alpha_2^{\phi}]\\
&=& [\alpha\alpha_3\alpha^{-1} \alpha_2^{-1},\alpha_2^{\phi}]=[\alpha_3\alpha_4^{-1}\alpha_2^{-1},\alpha_2^{\phi}]\\
&=& [\alpha_3,\alpha_2^\phi][\alpha_4,\alpha_2^\phi]^{-1}=[\alpha_3,\alpha_2^\phi],
\end{eqnarray*}
\begin{eqnarray*}
[\alpha^{-1},\alpha_1^{-1},\alpha_3^\phi]^{\alpha_1} &=& [\alpha^{-1},\alpha_1^{-1},\alpha_3^\phi]^{\alpha_1} = [\alpha_1\alpha \alpha_2^{-1} \alpha^{-1}\alpha_1^{-1},\alpha_3^\phi]^{\alpha_1}\\
&=& [\alpha \alpha_2^{-1}\alpha^{-1},\alpha_3^{\phi}]=[\alpha_3\alpha_4^{-1}\alpha_2^{-1},\alpha_3^{\phi}]\\
&=&[\alpha_4,\alpha_3^{\phi}]^{-1}[\alpha_2,\alpha_3^\phi]^{-1}=[\alpha_2,\alpha_3^\phi]^{-1}.
\end{eqnarray*}
By Hall-Witt identity, we have
\begin{eqnarray*}
1&=&[\alpha_2,\alpha,\alpha_1^\phi]^{\alpha^{-1}}[\alpha^{-1},\alpha_1^{-1},\alpha_2^\phi]^{\alpha_1}[\alpha_1,\alpha_2^{-1},{(\alpha^{-1})}^\phi]^{\alpha_2}\\
&=&[\alpha_3,\alpha_1^\phi][\alpha_3,\alpha_2^\phi],\\
1&=&[\alpha_3,\alpha,\alpha_1^\phi]^{\alpha^{-1}}[\alpha^{-1},\alpha_1^{-1},\alpha_3^\phi]^{\alpha_1}[\alpha_1,\alpha_3^{-1},{(\alpha^{-1})}^\phi]^{\alpha_3}\\
&=&[\alpha_4,\alpha_1^\phi][\alpha_2,\alpha_3^\phi]^{-1}.
\end{eqnarray*}
This implies that $[\alpha_3,\alpha_1^\phi]=[\alpha_2,\alpha_3^\phi]=[\alpha_4,\alpha_1^\phi]$.

Now consider the group $G\cong \Phi_9(2111)b_r$.
We have the following identities:
\begin{eqnarray*}
&&[\alpha_4,\alpha^\phi]^p=[\alpha_4^p,\alpha^\phi]=1=[\alpha_4^p,\alpha_1^\phi]=[\alpha_4,\alpha_1^\phi]^p,\\
&&[\alpha_3,\alpha^\phi]^p=[\alpha_3^p,\alpha^\phi]=1=[\alpha_3^p,\alpha_1^\phi]=[\alpha_3,\alpha_1^\phi]^p,\\
&&[\alpha_2,\alpha^\phi]^p=[\alpha_2^p,\alpha^\phi]=1=[\alpha_2^p,\alpha_1^\phi]=[\alpha_2,\alpha_1^\phi]^p, \\
&&[\alpha_4,\alpha^\phi]=[\alpha_1^{pk^{-1}},\alpha^\phi]=[\alpha_1,\alpha^\phi]^{pk^{-1}}=[\alpha_1,(\alpha^{pk^{-1}})^\phi]=1,\\
&&[\alpha_4,\alpha_1^\phi]=[\alpha_1^{pk^{-1}},\alpha_1^\phi]=[\alpha_1,\alpha_1^\phi]^{pk^{-1}}=1,\\
&& [\alpha_1,\alpha^\phi]^p=[\alpha_1,(\alpha^p)^\phi]=1. \\
\end{eqnarray*}
Thus, by Proposition \ref{B}, $G \wedge G$ is generated by 
$$\{[\alpha_1,\alpha^\phi], [\alpha_2,\alpha^\phi], [\alpha_3,\alpha^\phi], [\alpha_2,\alpha_1^\phi]\}.$$ 
Since, by Lemma \ref{R}(vi), $[[\alpha_2,\alpha^\phi],[\alpha_1,\alpha^\phi]]=[\alpha_3,\alpha_2^\phi]=1$, it follows that $G \wedge G$ is elementary abelian of order $p^4$.
Hence 
$$G \wedge G \cong  \mathbb{Z}_p^{(4)},$$
and consequently
$$\M(G) \cong \langle [\alpha_2,\alpha_1^\phi]\rangle\cong \mathbb{Z}_p.$$

\par

Similarly, we can compute
$$\Phi_9(2111)a \wedge \Phi_9(2111)a \cong \mathbb{Z}_p^{(4)}
~ \text{and}~
\M(\Phi_9(2111)a) \cong \mathbb{Z}_p.$$

\par

If $G$ is either $\Phi_{10}(2111)a_r$ or $\Phi_{10}(2111)b_r$, then by Lemma \ref{R}(viii),
 $$[\alpha_2,\alpha_4^\phi]=[\alpha_1,\alpha,\alpha_4^\phi]=1=[\alpha_2,\alpha,\alpha_4^\phi]=[\alpha_3,\alpha_4^\phi], $$
\begin{eqnarray*}
[\alpha^{-1},\alpha_1^{-1},\alpha_2^\phi]^{\alpha_1} &=& [\alpha_1\alpha \alpha_2^{-1} \alpha^{-1}\alpha_1^{-1},\alpha_2^\phi]^{\alpha_1}=[\alpha \alpha_2^{-1}\alpha^{-1},(\alpha_1^{-1}\alpha_2\alpha_1)^{\phi}]\\
&=& [\alpha\alpha_3\alpha^{-1} \alpha_2^{-1},(\alpha_2\alpha_4^{-1})^{\phi}]=[\alpha_3\alpha_4^{-1}\alpha_2^{-1},(\alpha_2\alpha_4^{-1})^{\phi}]\\
&=& [\alpha_3,\alpha_2^\phi][\alpha_4,\alpha_2^\phi]^{-1}=[\alpha_3,\alpha_2^\phi]
\end{eqnarray*}
and 
\begin{eqnarray*}
[\alpha^{-1},\alpha_1^{-1},\alpha_3^\phi]^{\alpha_1} &=& [\alpha^{-1},\alpha_1^{-1},\alpha_3^\phi]^{\alpha_1} = [\alpha_1\alpha \alpha_2^{-1} \alpha^{-1}\alpha_1^{-1},\alpha_3^\phi]^{\alpha_1}\hspace{1.1cm}\\
&=& [\alpha \alpha_2^{-1}\alpha^{-1},\alpha_3^{\phi}]=[\alpha\alpha_3\alpha^{-1} \alpha_2^{-1},\alpha_3^{\phi}]\\
&=& [\alpha_3\alpha_4\alpha_2^{-1},\alpha_3^{\phi}]=[\alpha_4,\alpha_3^\phi][\alpha_2,\alpha_3^\phi]^{-1}\\
&=& [\alpha_2,\alpha_3^\phi]^{-1}.
\end{eqnarray*}
By Hall-Witt identity, we have
\begin{eqnarray*}
1&=&[\alpha_2,\alpha,\alpha_1^\phi]^{\alpha^{-1}}[\alpha^{-1},\alpha_1^{-1},\alpha_2^\phi]^{\alpha_1}[\alpha_1,\alpha_2^{-1},{(\alpha^{-1})}^\phi]^{\alpha_2}\\
&=&[\alpha_3,\alpha_1^\phi][\alpha_3,\alpha_2^\phi][\alpha_4^{-1},(\alpha^{-1})^\phi]^{\alpha_2}\\
&=&[\alpha_3,\alpha_1^\phi][\alpha_3,\alpha_2^\phi][\alpha_4,\alpha^\phi],\\
1&=&[\alpha_3,\alpha,\alpha_1^\phi]^{\alpha^{-1}}[\alpha^{-1},\alpha_1^{-1},\alpha_3^\phi]^{\alpha_1}[\alpha_1,\alpha_3^{-1},{(\alpha^{-1})}^\phi]^{\alpha_3}\\
&=&[\alpha_4,\alpha_1^\phi][\alpha_2,\alpha_3^\phi]^{-1}.
\end{eqnarray*}
This implies that $[\alpha_2,\alpha_3^\phi]=[\alpha_4,\alpha_1^\phi]=[\alpha_4,\alpha^\phi][\alpha_3,\alpha_1^\phi]$.

Consider the group $G\cong \Phi_{10}(2111)a_r$.
We have the following identities:
\begin{eqnarray*}
&&[\alpha_4,\alpha^\phi]^p=[\alpha_4^p,\alpha^\phi]=1=[\alpha_4^p,\alpha_1^\phi]=[\alpha_4,\alpha_1^\phi]^p,\\
&&[\alpha_3,\alpha^\phi]^p=[\alpha_3^p,\alpha^\phi]=1=[\alpha_3^p,\alpha_1^\phi]=[\alpha_3,\alpha_1^\phi]^p,\\
&&[\alpha_2,\alpha^\phi]^p=[\alpha_2^p,\alpha^\phi]=1=[\alpha_2^p,\alpha_1^\phi]=[\alpha_2,\alpha_1^\phi]^p, \\
&&[\alpha_4,\alpha^\phi]=[\alpha^{pk^{-1}},\alpha^\phi]=[\alpha,\alpha^\phi]^{pk^{-1}}=1,\\
&&[\alpha_4,\alpha_1^\phi]=[\alpha^{pk^{-1}},\alpha_1^\phi]=[\alpha,\alpha_1^\phi]^{pk^{-1}}=[\alpha,(\alpha_1^{pk^{-1}})^\phi]=1,\\
&& [\alpha_1,\alpha^\phi]^p=[\alpha_1^p,\alpha^\phi]=1.
\end{eqnarray*}
Thus $G \wedge G$ is generated by 
$$\{[\alpha_1,\alpha^\phi], [\alpha_2,\alpha^\phi], [\alpha_3,\alpha^\phi], [\alpha_2,\alpha_1^\phi]\}.$$ 
By Lemma \ref{R}(vi), $G \wedge G$ is elementary abelian of order $p^4$.
Hence 
$$G \wedge G \cong  \mathbb{Z}_p^{(4)},$$
and consequently
$$\M(G) \cong \langle [\alpha_2,\alpha_1^\phi][\alpha_3,\alpha^\phi] \rangle \cong \mathbb{Z}_p.$$

\par

Similarly,  we can compute
$$\Phi_{10}(2111)b_r \wedge \Phi_{10}(2111)b_r \cong \mathbb{Z}_p^{(4)}
~ \text{and}~
\M(\Phi_{10}(2111)b_r) \cong \mathbb{Z}_p.$$
This completes the proof.
\hfill $\Box$

\end{proof}
\begin{lemma}
If $G$ is one of the groups $\Phi_9(1^5)$ or $\Phi_{10}(1^5)$, then $\M(G)$ is isomorphic to $\mathbb{Z}_p^{(3)}$, and $G \wedge G$ is isomorphic to $\Phi_2(111) \times \mathbb{Z}_p^{(3)}$. 
\end{lemma}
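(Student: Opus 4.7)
The plan is to work inside $G\wedge G\cong[G,G^\phi]/\nabla(G)$ with the commutator calculus used throughout the paper, but to exploit one new feature that is absent from all the lower-class cases: because $G$ has maximal class $4$ \emph{and} exponent $p$, the group $G\wedge G$ will turn out to be \emph{non-abelian}, and this is where the Heisenberg summand $\Phi_2(111)$ originates. I will first pin down $|G\wedge G|$ by comparing with the central quotient and then produce an explicit internal direct-product decomposition.

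\textbf{Order count.} Let $Z=\Z(G)=\langle\alpha_4\rangle$, so $G/Z\cong\Phi_3(1^4)$; by Theorem~\ref{SHHH} we have $\M(G/Z)\cong\mathbb{Z}_p^{(2)}$ and $(G/Z)\wedge(G/Z)\cong\mathbb{Z}_p^{(4)}$. The natural surjection $G\wedge G\twoheadrightarrow(G/Z)\wedge(G/Z)$ has kernel generated by $\{[\alpha_4,\alpha^\phi],[\alpha_4,\alpha_1^\phi]\}$, since Lemma~\ref{R}(viii) kills $[\alpha_4,g^\phi]$ whenever $g\in G'$ (write $g$ as a commutator of elements that commute with the central $\alpha_4$). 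Combining this with Theorem~\ref{G} applied to $Z$, which bounds $|\M(G)|\le|G/G'\otimes Z|\cdot p=p^3$, gives $|G\wedge G|\le p^6$.

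\textbf{The Heisenberg subgroup.} Set $a=[\alpha_1,\alpha^\phi]$, $b=[\alpha_2,\alpha^\phi]$, and $c=[a,b]$. Lemma~\ref{R}(vi) gives
\[
c=[[\alpha_1,\alpha],[\alpha_2,\alpha]^\phi]=[\alpha_2,\alpha_3^\phi].
\]
A second application of Lemma~\ref{R}(vi), using $[\alpha_2,\alpha_3]\in\gamma_5(G)=1$, shows that $c$ commutes with every generator $[\alpha_i,\alpha_j^\phi]$ of $G\wedge G$, so $c$ is central. The exponent-$p$ relations together with Lemma~\ref{O} force $a^p=b^p=c^p=1$. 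Hence $\langle a,b\rangle$ is a homomorphic image of $\Phi_2(111)$.

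\textbf{The complement and conclusion.} Hall--Witt manipulations of the type used in the previous lemma, together with Lemma~\ref{R}(iv),(viii), express every other generator of $G\wedge G$ in terms of three elements of order $p$, namely $[\alpha_3,\alpha^\phi]$, $[\alpha_2,\alpha_1^\phi]$, and one of $\{[\alpha_4,\alpha^\phi],[\alpha_4,\alpha_1^\phi]\}$. Lemma~\ref{R}(vi), together with $[\alpha_2,\alpha_3]=1$ and the centrality of $\alpha_4$ in $G$, makes all three commute with both $a$ and $b$, so they generate a central subgroup of $G\wedge G$. Matching the upper bound $p^6$ from the order count against the five listed generators forces $c\ne 1$, $\langle a,b\rangle\cong\Phi_2(111)$, and the product to be direct, giving $G\wedge G\cong\Phi_2(111)\times\mathbb{Z}_p^{(3)}$. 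The formula $|G\wedge G|=|\M(G)|\cdot|G'|$ with $|G'|=p^3$ then yields $|\M(G)|=p^3$, and since $\M(G)$ is visibly generated by the three mutually commuting elements of order $p$ lying in the kernel of $f'$ -- namely $c$, $[\alpha_2,\alpha_1^\phi]$, and $[\alpha_4,\alpha^\phi]$ -- we conclude $\M(G)\cong\mathbb{Z}_p^{(3)}$.

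\textbf{Main obstacle.} The heart of the argument is verifying $c\ne 1$: a direct attack on $[\alpha_2,\alpha_3^\phi]$ via Hall--Witt does not work (every such identity reduces to a tautology because both arguments lie in $G'$), so the collapse $c=1$ must be excluded indirectly via the Ganea-sequence upper bound of Step~1 combined with the explicit enumeration of at most six independent generators in Step~3. The case $G=\Phi_{10}(1^5)$ is handled by the same template, with the Hall--Witt identities acquiring extra correction terms of the form $[\alpha_4,\alpha^\phi]^{\pm 1}$, exactly paralleling the contrast between $\Phi_9(2111)$ and $\Phi_{10}(2111)$ in the previous lemma; these terms perturb only the identification of the $\mathbb{Z}_p^{(3)}$ complement, not the Heisenberg core.
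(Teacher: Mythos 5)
Your upper bounds are fine and your intended decomposition of $G\wedge G$ agrees with the paper's, but the argument has a genuine gap at exactly the point you yourself flag as the ``main obstacle'': nothing in your proof establishes a \emph{lower} bound $|\M(G)|\ge p^3$ (equivalently $|G\wedge G|\ge p^6$), and without it the conclusion does not follow. You propose to exclude the collapse $c=1$ ``indirectly via the Ganea-sequence upper bound combined with the explicit enumeration of at most six independent generators,'' but two upper bounds cannot manufacture a lower bound: knowing $|G\wedge G|\le p^6$ and that $G\wedge G$ is generated by at most six elements of order $p$ is perfectly consistent with $c=1$, with coincidences among the remaining generators, and with $|G\wedge G|$ being as small as $p^4$ --- which is the only genuine lower bound actually available from your Step 1, coming from the surjection onto $(G/Z)\wedge(G/Z)\cong\mathbb{Z}_p^{(4)}$, and which yields only $|\M(G)|\ge p$. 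The phrase ``at most six independent generators'' conflates a generating set with an independent set; independence is precisely what has to be proved.

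The paper closes this hole with the one ingredient your proposal lacks: an explicit central extension. Starting from the free group $F$ on $\{\alpha,\alpha_1\}$ it constructs, modulo $\gamma_6(F)F^p$ and suitable additional relators, a group $H$ of order $p^8$ with $H/\langle\beta_1,\beta_2,\beta_3\rangle\cong G$ and $\langle\beta_1,\beta_2,\beta_3\rangle\le H'\cap\Z(H)$ elementary abelian of order $p^3$; the order of $H$ is verified through the intermediate quotients $H_1$ (of order $p^6$, identified as $\Phi_{35}(1^6)$, resp.\ $\Phi_{39}(1^6)$ for $\Phi_{10}(1^5)$) and $H_2$ (of order $p^7$), using Hall--Witt identities computed in $F$. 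The image of the transgression map $\Hom(Z,\mathbb{C}^*)\to\M(H/Z)$ is then $H'\cap Z\cong\mathbb{Z}_p^{(3)}$, which embeds $\mathbb{Z}_p^{(3)}$ into $\M(G)$ and, together with the upper bound from Theorem \ref{J}, gives $|\M(G)|=p^3$ and $|G\wedge G|=p^6$ \emph{before} any generator analysis. Only then does the enumeration of six generators of order $p$, which you describe correctly, force them to be independent and pin down the structure $\Phi_2(111)\times\mathbb{Z}_p^{(3)}$. You would need to supply this construction, or some other independent proof that $[\alpha_2,\alpha_3^\phi]\neq 1$ and the six generators are independent, for your argument to go through.
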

\begin{proof}
Consider the group $G \cong \Phi_9(1^5)$. 
Let $F$ be the free group generated by $\{\alpha,\alpha_1\}$. 
Define $\alpha_{i+1}=[\alpha_i,\alpha], \; 1 \leq i\leq 3$. Set $\beta_1=[\alpha_1,\alpha_2]$, $\beta_2=[\alpha_4,\alpha]$ and $\beta_3=[\alpha_1,\alpha_4]$. Then, modulo $\gamma_6(F)$, we have 

\begin{eqnarray*}
[\alpha^{-1},\alpha_1^{-1},\alpha_3]^{\alpha_1} &=& [\alpha^{-1},\alpha_1^{-1},\alpha_3]^{\alpha_1} =[\alpha_1\alpha \alpha_2^{-1} \alpha^{-1}\alpha_1^{-1},\alpha_3]^{\alpha_1} \\
&=& [\alpha \alpha_2^{-1}\alpha^{-1},\alpha_3[\alpha_3,\alpha_1]]=[\alpha\alpha_3\alpha^{-1} \alpha_2^{-1},\alpha_3[\alpha_3,\alpha_1]]  \\
&=& [\alpha_3\alpha\alpha_4^{-1}\alpha^{-1}\alpha_2^{-1},\alpha_3[\alpha_3,\alpha_1]]=[\alpha_3\alpha\beta_2\alpha^{-1}\alpha_4^{-1}\alpha_2^{-1},\alpha_3[\alpha_3,\alpha_1]] \\
&=& [\alpha_3\beta_2\alpha_4^{-1}\alpha_2^{-1},\alpha_3[\alpha_3,\alpha_1]]=[\alpha_2^{-1},\alpha_3]\\
&=&[\alpha_2,\alpha_3]^{-1},
\end{eqnarray*}
\begin{eqnarray*}
[\alpha^{-1},\alpha_1^{-1},\alpha_2]^{\alpha_1}&=& [\alpha^{-1},\alpha_1^{-1},\alpha_2]^{\alpha_1}=[\alpha_1\alpha \alpha_2^{-1} \alpha^{-1}\alpha_1^{-1},\alpha_2]^{\alpha_1}\hspace{2.8cm}\\
&=& [\alpha_3\beta_2\alpha_4^{-1}\alpha_2^{-1},\alpha_2\beta_1^{-1}]=[\alpha_3,\alpha_2\beta_1^{-1}][\alpha_2^{-1},\alpha_2\beta_1^{-1}]\\
&=&[\alpha_3,\alpha_2][\alpha_2,\beta_1].
\end{eqnarray*}
By Hall-Witt identity, we have the following identities modulo $\gamma_6(F)$: 
\begin{eqnarray}
1&=&[\alpha_3,\alpha,\alpha_1]^{\alpha^{-1}}[\alpha^{-1},\alpha_1^{-1},\alpha_3]^{\alpha_1}[\alpha_1,\alpha_3^{-1},\alpha^{-1}]^{\alpha_3} \nonumber \\
&=&[\alpha_4,\alpha_1]^{\alpha^{-1}}[\alpha_2,\alpha_3]^{-1}[\alpha_3,\alpha_1,\alpha_3^{-1}\alpha^{-1}\alpha_3] \nonumber \\
&=&[\alpha_4,\alpha_1][\alpha_2,\alpha_3]^{-1}[\alpha_3,\alpha_1, \alpha_4\alpha^{-1}] \nonumber \\
&=&[\alpha_4,\alpha_1][\alpha_2,\alpha_3]^{-1}[\alpha_3,\alpha_1,\alpha^{-1}], \label{x}
\end{eqnarray}
\begin{eqnarray}
1&=&[\alpha_2,\alpha,\alpha_1]^{\alpha^{-1}}[\alpha^{-1},\alpha_1^{-1},\alpha_2]^{\alpha_1}[\alpha_1,\alpha_2^{-1},\alpha^{-1}]^{\alpha_2} \nonumber \\
&=&[\alpha_3,\alpha_1]^{\alpha^{-1}}[\alpha_3,\alpha_2][\alpha_2,\beta_1][\beta_1^{-1},\alpha_2^{-1}\alpha^{-1}\alpha_2] \nonumber \\
&=&[\alpha_3,\alpha_1][\alpha_3,\alpha_1, \alpha^{-1}][\alpha_3,\alpha_2][\alpha_2,\beta_1][\beta_1^{-1}, \alpha_3\alpha^{-1}] \nonumber \\
&=&[\alpha_3,\alpha_1][\alpha_3,\alpha_1, \alpha]^{-1}[\alpha_3,\alpha_2][\alpha_2,\beta_1][\beta_1^{-1},\alpha^{-1}] \nonumber \\
&=&[\alpha_3,\alpha_1][\alpha_3,\alpha_1, \alpha]^{-1}[\alpha_3,\alpha_2][\alpha_2,\beta_1][\beta_1,\alpha].\label{y}
\end{eqnarray}

Now consider
$$H_1 = F/\langle\gamma_6(F), F^p,  \beta_1, \beta_3, [\alpha_3,\alpha_1,\alpha], [\alpha_3,\alpha_1,\alpha_1]\rangle.$$ 
Using (\ref{x}) and (\ref{y}) we have  $[\alpha_4,\alpha_1]=[\alpha_2,\alpha_3]=[\alpha_3,\alpha_1]=1$ in $H_1$.
So 
$$H_1 \cong  \langle \alpha,\alpha_1,\alpha_2,\alpha_3,\alpha_4,\beta_2 \mid [\alpha_i,\alpha]=\alpha_{i+1}, [\alpha_4,\alpha]=\beta_2,\alpha^p=\alpha_1^p=\alpha_{i+1}^p=\beta_2^p=1 \;(i=1,2,3)\rangle,$$
which is the  group $\Phi_{35}(1^6)$ of order $p^6$  in \cite{RJ}.  
Now consider 
$$H_2=F/\langle\gamma_6(F), F^p, \beta_1,[\alpha_3,\alpha_1,\alpha], [\alpha_3,\alpha_1,\alpha_1]\rangle.$$ 
Using (\ref{x}) and (\ref{y}), we have $[\alpha_4,\alpha_1]=[\alpha_2,\alpha_3]=[\alpha_3,\alpha_1]$ in $H_2$. 
Observe that 
\begin{eqnarray*}
H_2 &\cong& \langle \alpha,\alpha_i,\alpha_4,\beta_2, \beta_3 \mid [\alpha_i,\alpha]=\alpha_{i+1}, [\alpha_4,\alpha]=\beta_2,[\alpha_1,\alpha_4]=[\alpha_1,\alpha_3] =[\alpha_3,\alpha_2]=\beta_3,\\
&& \hspace{5 cm} \alpha^p=\alpha_i^{p} = \alpha_4^p=\beta_2^p=\beta_3^p=1, \;1 \leq i \leq 3  \rangle.
\end{eqnarray*}
Since $H_2/\gen{\beta_3} \cong H_1$, it follows that $H_2$ is of order $p^7$.
Now consider 
$$H=F/\langle\gamma_6(F), F^p, [\beta_1,\alpha], [\beta_1,\alpha_1],[\alpha_3,\alpha_1,\alpha], [\alpha_3,\alpha_1,\alpha_1]\rangle.$$
Observe that 
\begin{eqnarray*}
H &\cong& \langle \alpha,\alpha_j,\beta_i \mid [\alpha_i,\alpha]=\alpha_{i+1},[\alpha_1,\alpha_2]=\beta_1,[\alpha_4,\alpha]=\beta_2,[\alpha_1,\alpha_4]=[\alpha_1,\alpha_3] =[\alpha_3,\alpha_2]=\beta_3,\\
&& \hspace{6.5 cm} \alpha^p=\alpha_j^{p}=\beta_i^p=1, \;1 \leq i \leq 3, \;1 \leq j\leq 4  \rangle.
\end{eqnarray*}
Then $H$ is a group of order $p^8$ and
 $$H/\langle \beta_1,\beta_2,\beta_3\rangle \cong G.$$ 
 
 Take $Z=\langle \beta_1,\beta_2,\beta_3\rangle$. Now the image of the transgression homomorphism $tra: \Hom(Z,\mathbb{C}^*) \rightarrow \M(H/Z)$ is $$H' \cap Z \cong Z \cong \mathbb{Z}_p^{(3)}.$$ Hence $\mathbb{Z}_p^{(3)}$ is contained in $\M(G)$. By Theorem \ref{J}, $|\M(G)| \leq p^3$. 
Hence $$\M(G) \cong \mathbb{Z}_p^{(3)}.$$
As $|G'|=p^3$, we get $$|G \wedge G|=p^6.$$
Since $\alpha_4 \in \Z(G)$,   by Lemma \ref{R}(viii)
$$[\alpha_2,\alpha_4^\phi]=[\alpha_3,\alpha_4^\phi]=1.$$ 
Thus $G \wedge G$ is generated by the set
$$\{[\alpha_1,\alpha^\phi], [\alpha_2,\alpha^\phi], [\alpha_3,\alpha^\phi],[\alpha_4,\alpha^\phi],[\alpha_2,\alpha_1^\phi],[\alpha_3,\alpha_1^\phi], [\alpha_4,\alpha_1^\phi],[\alpha_3,\alpha_2^\phi]\}.$$ 
Now we have
\begin{eqnarray*}
[\alpha^{-1},\alpha_1^{-1},\alpha_2^\phi]^{\alpha_1} &=& [\alpha_1\alpha \alpha_2^{-1} \alpha^{-1}\alpha_1^{-1},\alpha_2^\phi]^{\alpha_1}=[\alpha \alpha_2^{-1}\alpha^{-1},\alpha_2^{\phi}]\\
&=& [\alpha\alpha_3\alpha^{-1} \alpha_2^{-1},\alpha_2^{\phi}]=[\alpha_3\alpha_4^{-1}\alpha_2^{-1},\alpha_2^{\phi}]\\
&=& [\alpha_3,\alpha_2^\phi][\alpha_4,\alpha_2^\phi]^{-1}=[\alpha_3,\alpha_2^\phi],
\end{eqnarray*}
\begin{eqnarray*}
[\alpha^{-1},\alpha_1^{-1},\alpha_3^\phi]^{\alpha_1} &=& [\alpha^{-1},\alpha_1^{-1},\alpha_3^\phi]^{\alpha_1} = [\alpha_1\alpha \alpha_2^{-1} \alpha^{-1}\alpha_1^{-1},\alpha_3^\phi]^{\alpha_1}\\
&=& [\alpha \alpha_2^{-1}\alpha^{-1},\alpha_3^{\phi}]=[\alpha\alpha_3\alpha^{-1} \alpha_2^{-1},\alpha_3^{\phi}]\\
&=& [\alpha_3\alpha_4^{-1}\alpha_2^{-1},\alpha_3^{\phi}]=[\alpha_2,\alpha_3^\phi]^{-1}.
\end{eqnarray*}
By Hall-Witt identity, we have
\begin{eqnarray*}
1&=&[\alpha_2,\alpha,\alpha_1^\phi]^{\alpha^{-1}}[\alpha^{-1},\alpha_1^{-1},\alpha_2^\phi]^{\alpha_1}[\alpha_1,\alpha_2^{-1},{(\alpha^{-1})}^\phi]^{\alpha_2}\\
&=&[\alpha_3,\alpha_1^\phi][\alpha_3,\alpha_2^\phi],\\
1&=&[\alpha_3,\alpha,\alpha_1^\phi]^{\alpha^{-1}}[\alpha^{-1},\alpha_1^{-1},\alpha_3^\phi]^{\alpha_1}[\alpha_1,\alpha_3^{-1},{(\alpha^{-1})}^\phi]^{\alpha_3}\\
&=&[\alpha_4,\alpha_1^\phi][\alpha_2,\alpha_3^\phi]^{-1}.
\end{eqnarray*}
This implies that $[\alpha_4,\alpha_1^\phi]= [\alpha_2,\alpha_3^\phi]=[\alpha_3,\alpha_1^\phi]$. 
By Lemma \ref{R}(vi), $[[\alpha_1,\alpha^\phi], [\alpha_2,\alpha^\phi]]=[\alpha_2,\alpha_3^\phi]$.
By Lemma \ref{O}, we have
\begin{eqnarray*}
&&[\alpha_4,\alpha^\phi]^p=[\alpha_4^p,\alpha^\phi]=1=[\alpha_3^p,\alpha^\phi]=[\alpha_3,\alpha^\phi]^p,\\
&&[\alpha_3,\alpha_2^\phi]^p=[\alpha_3^p,\alpha_2^\phi]=1=[\alpha_2^p,\alpha_1^\phi]=[\alpha_2,\alpha_1^\phi]^p,\\
&&[\alpha_2,\alpha^\phi]^p=[\alpha_2^p,\alpha^\phi]=1=[\alpha_1^p,\alpha^\phi]=[\alpha_1,\alpha^\phi]^p.
\end{eqnarray*}
Hence $$G \wedge G \cong \langle [\alpha_1,\alpha^\phi], [\alpha_2,\alpha^\phi], [\alpha_2,\alpha_3^\phi]  \rangle \times \langle [\alpha_2,\alpha_1^\phi] \rangle \times  \langle [\alpha_3,\alpha^\phi] \rangle \times \langle [\alpha_4,\alpha^\phi] \rangle \cong \Phi_2(111)\times \mathbb{Z}_p^{(3)}.$$

\par 

Now consider the group $G \cong \Phi_{10}(1^5)$.  Let $F$ be the free group generated by $\{\alpha,\alpha_1\}$. 
Define $\alpha_{i+1}=[\alpha_i,\alpha], \; i=1,2,3$. Set $\beta=[\alpha_1,\alpha_2]$,
$\beta_1=[\alpha_4,\alpha]$, $\beta_2=[\alpha_1,\alpha_4]$ and $\beta_3=\alpha_4^{-1}[\alpha_1,\alpha_2]$.
Reading Modulo $\gamma_6(F)$, we have
\begin{eqnarray*}
[\alpha^{-1},\alpha_1^{-1},\alpha_3]^{\alpha_1} &=& [\alpha^{-1},\alpha_1^{-1},\alpha_3]^{\alpha_1} =[\alpha_1\alpha \alpha_2^{-1} \alpha^{-1}\alpha_1^{-1},\alpha_3]^{\alpha_1}\\
&=& [\alpha \alpha_2^{-1}\alpha^{-1},\alpha_3[\alpha_3,\alpha_1]]=[\alpha\alpha_3\alpha^{-1} \alpha_2^{-1},\alpha_3[\alpha_3,\alpha_1]]\\
&=& [\alpha_3\alpha\alpha_4^{-1}\alpha^{-1}\alpha_2^{-1},\alpha_3[\alpha_3,\alpha_1]]=[\alpha_3\alpha\beta_1\alpha^{-1}\alpha_4^{-1}\alpha_2^{-1},\alpha_3[\alpha_3,\alpha_1]]\\
&=& [\alpha_3\beta_1\alpha_4^{-1}\alpha_2^{-1},\alpha_3[\alpha_3,\alpha_1]]=[\alpha_2^{-1},\alpha_3]\\
&=& [\alpha_2,\alpha_3]^{-1},
\end{eqnarray*}
\begin{eqnarray*}
[\alpha^{-1},\alpha_1^{-1},\alpha_2]^{\alpha_1}&=& [\alpha^{-1},\alpha_1^{-1},\alpha_2]^{\alpha_1}=[\alpha_1\alpha \alpha_2^{-1} \alpha^{-1}\alpha_1^{-1},\alpha_2]^{\alpha_1}\hspace{2.8cm}\\
&=& [\alpha_3\beta_1\alpha_4^{-1}\alpha_2^{-1},\alpha_2\beta_3^{-1}\alpha_4^{-1}]=[\alpha_3,\alpha_2\beta_3^{-1}\alpha_4^{-1}][\alpha_2^{-1},\alpha_2\beta_3^{-1}\alpha_4^{-1}]\\
&=&[\alpha_3,\alpha_2][\alpha_2,\beta_3].
\end{eqnarray*}
By Hall-Witt identity, we have the following identities in $F$  modulo $\gamma_6(F)$: 
\begin{eqnarray}
1&=&[\alpha_3,\alpha,\alpha_1]^{\alpha^{-1}}[\alpha^{-1},\alpha_1^{-1},\alpha_3]^{\alpha_1}[\alpha_1,\alpha_3^{-1},\alpha^{-1}]^{\alpha_3}\hspace{.8cm}\nonumber\\
&=&[\alpha_4,\alpha_1]^{\alpha^{-1}}[\alpha_2,\alpha_3]^{-1}[\alpha_3,\alpha_1,\alpha_3^{-1}\alpha^{-1}\alpha_3]\nonumber\\
&=&[\alpha_4,\alpha_1][\alpha_2,\alpha_3]^{-1}[\alpha_3,\alpha_1, \alpha_4\alpha^{-1}]\nonumber\\
&=&[\alpha_4,\alpha_1][\alpha_2,\alpha_3]^{-1}[\alpha_3,\alpha_1,\alpha^{-1}], \label{x1}
\end{eqnarray}
\begin{eqnarray}
1&=&[\alpha_2,\alpha,\alpha_1]^{\alpha^{-1}}[\alpha^{-1},\alpha_1^{-1},\alpha_2]^{\alpha_1}[\alpha_1,\alpha_2^{-1},\alpha^{-1}]^{\alpha_2} \nonumber\\
&=&[\alpha_3,\alpha_1]^{\alpha^{-1}}[\alpha_3,\alpha_2][\alpha_2,\beta_3][\beta_3^{-1}\alpha_4^{-1},\alpha_2^{-1}\alpha^{-1}\alpha_2] \nonumber\\
&=&[\alpha_3,\alpha_1][\alpha_3,\alpha_1, \alpha^{-1}][\alpha_3,\alpha_2][\alpha_2,\beta_3][\beta_3^{-1}\alpha_4^{-1}, \alpha_3\alpha^{-1}] \nonumber\\
&=&[\alpha_3,\alpha_1][\alpha_3,\alpha_1, \alpha]^{-1}[\alpha_3,\alpha_2][\alpha_2,\beta_3][\beta_3,\alpha][\alpha_4,\alpha]. \label{y1}
\end{eqnarray} 

Consider 
$$H_1= F/\langle\gamma_6(F), F^p, \beta_1,\beta_3,[\alpha_3,\alpha_1,\alpha], [\alpha_3,\alpha_1,\alpha_1]\rangle.$$
By (\ref{x1}) and (\ref{y1}), respectively,  we have $[\alpha_4,\alpha_1]=[\alpha_2,\alpha_3]$ and  
$[\alpha_1,\alpha_3]=[\alpha_4,\alpha][\alpha_3,\alpha_2]$ in $H_1$. Observe that
\begin{eqnarray*}
H_1 &\cong & \langle \alpha,\alpha_1,\alpha_2,\alpha_3,\alpha_4, \beta_2 \mid [\alpha_i,\alpha]=\alpha_{i+1}, [\alpha_1,\alpha_2]=\alpha_4, [\alpha_1,\alpha_4]=[\alpha_3,\alpha_2]=[\alpha_1,\alpha_3]=\beta_2,\\
&&\hspace{6 cm}\alpha^p=\alpha_1^p=\alpha_{i+1}^p=\beta_2^p=1 \;(i=1,2,3)\rangle.
\end{eqnarray*}
Then $H_1$ is the group $\Phi_{39}(1^6)$ of order $p^6$ in \cite{RJ}. Now define
$$H_2= F/\langle\gamma_6(F), F^p, \beta_3,[\alpha_3,\alpha_1,\alpha], [\alpha_3,\alpha_1,\alpha_1]\rangle.$$
Again, by (\ref{x1}) and (\ref{y1}), we have $[\alpha_4,\alpha_1]=[\alpha_2,\alpha_3]$ and  
$[\alpha_1,\alpha_3]=[\alpha_4,\alpha][\alpha_3,\alpha_2]$ in $H_2$; hence
\begin{eqnarray*}
H_2 &\cong & \langle \alpha,\alpha_i, \alpha_4, \beta_1, \beta_2 \mid [\alpha_i,\alpha]=\alpha_{i+1},[\alpha_1,\alpha_2]=\alpha_4,[\alpha_4,\alpha]=\beta_1,[\alpha_1,\alpha_4]=[\alpha_3,\alpha_2]=\beta_2,\\
&&\hspace{4 cm}[\alpha_1,\alpha_3]=\beta_1\beta_2,\alpha^p=\alpha_{i+1}^{p}=\beta_1^p=\beta_2^p=1,~1 \leq i \leq 3 \rangle.
\end{eqnarray*}
Notice that $|H_2| = p^7$.

Finally consider
$$H=F/\langle\gamma_6(F),F^p, [\alpha_3,\alpha_1,\alpha], [\alpha_3,\alpha_1,\alpha_1],[\beta_3,\alpha],[\beta_3,\alpha_1]\rangle.$$ It follows that
\begin{eqnarray*}
H &\cong & \langle \alpha,\alpha_j,\beta_i \mid [\alpha_i,\alpha]=\alpha_{i+1},[\alpha_1,\alpha_2]=\alpha_4 \beta_3,[\alpha_4,\alpha]=\beta_1,[\alpha_1,\alpha_4]=[\alpha_3,\alpha_2]=\beta_2,\\
&&\hspace{3 cm}[\alpha_1,\alpha_3]=\beta_1\beta_2,\alpha^p=\alpha_{i+1}^{p}=\beta_i^p=1, ~1 \leq i \leq 3,~1 \leq j \leq 4 \rangle.
\end{eqnarray*} 
Notice that $|H|=p^8$ and $H/\langle \beta_1,\beta_2,\beta_3 \rangle \cong G$.
Then, as in the preceding case, we have 
$$\M(G) \cong \mathbb{Z}_p^{(3)}.$$
As $|G'|=p^3$, we get $$|G \wedge G|=p^6.$$
Since $\alpha_4 \in \Z(G)$, by Lemma \ref{R}(viii),
 $$[\alpha_2,\alpha_4^\phi]=[\alpha_1,\alpha,\alpha_4^\phi]=1=[\alpha_2,\alpha,\alpha_4^\phi]=[\alpha_3,\alpha_4^\phi]. $$ 
Hence $G \wedge G$ is generated by the set
$$\{[\alpha_1,\alpha^\phi], [\alpha_2,\alpha^\phi], [\alpha_3,\alpha^\phi],[\alpha_4,\alpha^\phi],[\alpha_2,\alpha_1^\phi],[\alpha_3,\alpha_1^\phi], [\alpha_4,\alpha_1^\phi],[\alpha_3,\alpha_2^\phi]\}.$$

We have
\begin{eqnarray*}
[\alpha^{-1},\alpha_1^{-1},\alpha_2^\phi]^{\alpha_1} &=& [\alpha_1\alpha \alpha_2^{-1} \alpha^{-1}\alpha_1^{-1},\alpha_2^\phi]^{\alpha_1}=[\alpha \alpha_2^{-1}\alpha^{-1},(\alpha_1^{-1}\alpha_2\alpha_1)^{\phi}]\\
&=& [\alpha\alpha_3\alpha^{-1} \alpha_2^{-1},(\alpha_2\alpha_4)^{\phi}]=[\alpha_3\alpha_4^{-1}\alpha_2^{-1},(\alpha_2\alpha_4)^{\phi}]\\
&=& [\alpha_4,\alpha_2^\phi]^{-1}[\alpha_3,\alpha_2^\phi]=[\alpha_3,\alpha_2^\phi],
\end{eqnarray*}
\begin{eqnarray*}
[\alpha^{-1},\alpha_1^{-1},\alpha_3^\phi]^{\alpha_1} &=& [\alpha^{-1},\alpha_1^{-1},\alpha_3^\phi]^{\alpha_1} = [\alpha_1\alpha \alpha_2^{-1} \alpha^{-1}\alpha_1^{-1},\alpha_3^\phi]^{\alpha_1}\hspace{1.2cm}\\
&=& [\alpha \alpha_2^{-1}\alpha^{-1},\alpha_3^{\phi}]=[\alpha\alpha_3\alpha^{-1} \alpha_2^{-1},\alpha_3^{\phi}]\\
&=& [\alpha_3\alpha_4\alpha_2^{-1},\alpha_3^{\phi}]=[\alpha_2,\alpha_3^\phi]^{-1}.
\end{eqnarray*}
By Hall-Witt identity, we have
\begin{eqnarray*}
1&=&[\alpha_2,\alpha,\alpha_1^\phi]^{\alpha^{-1}}[\alpha^{-1},\alpha_1^{-1},\alpha_2^\phi]^{\alpha_1}[\alpha_1,\alpha_2^{-1},{(\alpha^{-1})}^\phi]^{\alpha_2}\\
&=&[\alpha_3,\alpha_1^\phi][\alpha_3,\alpha_2^\phi][\alpha_4^{-1},(\alpha^{-1})^\phi]^{\alpha_2}\\
&=&[\alpha_3,\alpha_1^\phi][\alpha_3,\alpha_2^\phi][\alpha_4,\alpha^\phi],\\
1&=&[\alpha_3,\alpha,\alpha_1^\phi]^{\alpha^{-1}}[\alpha^{-1},\alpha_1^{-1},\alpha_3^\phi]^{\alpha_1}[\alpha_1,\alpha_3^{-1},{(\alpha^{-1})}^\phi]^{\alpha_3}\\
&=&[\alpha_4,\alpha_1^\phi][\alpha_2,\alpha_3^\phi]^{-1}.
\end{eqnarray*}
This implies that $[\alpha_4,\alpha_1^\phi]= [\alpha_2,\alpha_3^\phi]=[\alpha_4,\alpha^\phi][\alpha_3,\alpha_1^\phi]$.
By Lemma \ref{R}(vi), $[[\alpha_1,\alpha^\phi], [\alpha_2,\alpha^\phi]]=[\alpha_2,\alpha_3^\phi]$. 
We have the following identities
\begin{eqnarray*}
&&[\alpha_4,\alpha^\phi]^p=[\alpha_4^p,\alpha^\phi]=1=[\alpha_3^p,\alpha^\phi]=[\alpha_3,\alpha^\phi]^p,\\
&&[\alpha_3,\alpha_2^\phi]^p=[\alpha_3^p,\alpha_2^\phi]=1=[\alpha_2^p,\alpha_1^\phi]=[\alpha_2,\alpha_1^\phi]^p,\\
&&[\alpha_2,\alpha^\phi]^p=[\alpha_2^p,\alpha^\phi]=1=[\alpha_1^p,\alpha^\phi]=[\alpha_1,\alpha^\phi]^p.
\end{eqnarray*}
Hence 
$$G \wedge G=\langle [\alpha_1,\alpha^\phi], [\alpha_2,\alpha^\phi], [\alpha_2,\alpha_3^\phi]  \rangle \times \langle [\alpha_2,\alpha_1^\phi] \rangle \times  \langle [\alpha_3,\alpha^\phi] \rangle \times \langle [\alpha_4,\alpha^\phi] \rangle \cong \Phi_2(111)\times \mathbb{Z}_p^{(3)}.$$
The proof is now complete.
\hfill $\Box$

\end{proof}


\section{Remaining Groups of order $p^5$}
We start with the groups which occur as direct product of groups of smaller orders. For such groups we compute the Schur multiplier  in the following lemma, whose proof follows  using  Theorem \ref{D} and Theorem \ref{SHHH}.

\begin{lemma}
The following assertions hold:
\begin{enumerate}[label=(\roman*)]
\item $\M(\Phi_2(311)a) \cong \M(\Phi_2(221)b)  \cong  \mathbb{Z}_p \times \mathbb{Z}_p$, 

\item $\M(\Phi_2(221)a)  \cong  \mathbb{Z}_p^{(3)}$, 

\item $\M(\Phi_2(2111)a) \cong \M(\Phi_2(2111)b) \cong  \mathbb{Z}_p^{(5)}$,  

\item $\M(\Phi_2(2111)c) \cong \M(\Phi_2(2111)d) \cong  \mathbb{Z}_p^{(4)}$,

\item $\M(\Phi_2(1^5)) \cong  \mathbb{Z}_p^{(7)}$,

\item $\M(\Phi_3(2111)a) \cong \M(\Phi_3(2111)b_r) \cong  \mathbb{Z}_p^{(3)}$, 

\item $\M(\Phi_3(1^5)) \cong  \mathbb{Z}_p^{(4)}$.
\end{enumerate}
\end{lemma}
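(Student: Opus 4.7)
The plan is to exhibit each of the listed groups as an internal direct product of smaller groups (of orders $p^2$, $p^3$, or $p^4$) whose Schur multipliers are already known from Theorem \ref{SHHH}, and then apply Theorem \ref{D} (together with the elementary formulas for the abelian tensor product of cyclic groups). Concretely, by inspecting the presentations in the Annexure, one identifies
\begin{align*}
\Phi_2(311)a &\cong \Phi_2(31)\times \mathbb{Z}_p, & \Phi_2(221)a &\cong \Phi_2(22)\times \mathbb{Z}_p, \\
\Phi_2(221)b &\cong \Phi_2(21)\times \mathbb{Z}_{p^2}, & \Phi_2(2111)a &\cong \Phi_2(211)a\times \mathbb{Z}_p, \\
\Phi_2(2111)b &\cong \Phi_2(211)b\times \mathbb{Z}_p, & \Phi_2(2111)c &\cong \Phi_2(211)c\times \mathbb{Z}_p, \\
\Phi_2(2111)d &\cong \Phi_2(111)\times \mathbb{Z}_{p^2}, & \Phi_2(1^5) &\cong \Phi_2(1^4)\times \mathbb{Z}_p, \\
\Phi_3(2111)a &\cong \Phi_3(211)a \times \mathbb{Z}_p, & \Phi_3(2111)b_r &\cong \Phi_3(211)b_r \times \mathbb{Z}_p, \\
\Phi_3(1^5) &\cong \Phi_3(1^4)\times \mathbb{Z}_p.
\end{align*}
In each case the check is immediate: the second factor is generated by a central element of the appropriate order that does not appear in any non-trivial commutator of the presentation, and whose cyclic subgroup intersects the complementary subgroup trivially.

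Once these decompositions are in hand, I would apply Theorem \ref{D} in the form
\[
\M(H\times K)\;\cong\;\M(H)\times \M(K)\times \bigl(H^{ab}\otimes K^{ab}\bigr),
\]
reading $\M(H)$ and $H^{ab}$ directly from the table in Theorem \ref{SHHH} (together with $\M(\mathbb{Z}_p)=\M(\mathbb{Z}_{p^2})=1$ and the standard identities $\mathbb{Z}_{p^a}\otimes \mathbb{Z}_{p^b}\cong \mathbb{Z}_{p^{\min(a,b)}}$). For a representative calculation: $\M(\Phi_2(22)\times \mathbb{Z}_p)\cong \mathbb{Z}_p\times 1 \times ((\mathbb{Z}_{p^2}\times \mathbb{Z}_p)\otimes \mathbb{Z}_p)\cong \mathbb{Z}_p^{(3)}$, giving part (ii); and $\M(\Phi_2(111)\times \mathbb{Z}_{p^2})\cong \mathbb{Z}_p^{(2)}\times 1 \times (\mathbb{Z}_p^{(2)}\otimes \mathbb{Z}_{p^2})\cong \mathbb{Z}_p^{(4)}$, giving the $\Phi_2(2111)d$ half of part (iv). The remaining items follow by the same bookkeeping.

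The only non-routine step is the first one, namely verifying the direct product decompositions from James's presentations. This is a matching exercise but must be done carefully: for example, in $\Phi_2(2111)d$ one needs to recognize that the presentation admits a central $\mathbb{Z}_{p^2}$-direct summand rather than a $\mathbb{Z}_p\times\mathbb{Z}_p$ summand, which controls whether the multiplier ends up as $\mathbb{Z}_p^{(4)}$ or $\mathbb{Z}_p^{(5)}$. Once the splittings are fixed, Theorem \ref{D} reduces everything to a tabulation, so I would present the proof as a single paragraph listing the decompositions followed by one line of arithmetic per case. No additional machinery (not even the $\nu(G)$-calculus used elsewhere in the article) is required.
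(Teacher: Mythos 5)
Your proposal is correct and is exactly the paper's proof: the lemma is stated for precisely those groups of order $p^5$ that James lists as direct products, and the paper likewise just invokes Theorem \ref{D} together with the table in Theorem \ref{SHHH}. The only difference is that the step you flag as ``non-routine'' --- recognizing the splittings --- is actually definitional here, since the Annexure records $\Phi_2(311)a=\Phi_2(31)\times\mathbb{Z}_p$, $\Phi_2(2111)d=\Phi_2(111)\times\mathbb{Z}_{p^2}$, etc., verbatim.
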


The proof of the following lemma is a direct consequence of Proposition \ref{D1} and Theorem \ref{SHHH}. 

\begin{lemma}
The following assertions hold:
\begin{enumerate}[label=(\roman*)]
\item $\Phi_2(311)a \otimes \Phi_2(311)a  \cong \Phi_2(221)b \otimes \Phi_2(221)b  \cong  \mathbb{Z}_{p^2} \times \mathbb{Z}_p^{(8)}$, 

\item $\Phi_2(221)a \otimes \Phi_2(221)a \cong  \mathbb{Z}_{p^2}^{(2)} \times \mathbb{Z}_p^{(7)}$, 

\item $\Phi_2(2111)a \otimes \Phi_2(2111)a \cong \Phi_2(2111)b \otimes \Phi_2(2111)b \cong  \mathbb{Z}_p^{(16)}$,  

\item $\Phi_2(2111)c \otimes \Phi_2(2111)c \cong \Phi_2(2111)d \otimes \Phi_2(2111)d \cong   \mathbb{Z}_{p^2} \times \mathbb{Z}_p^{(10)}$,

\item $\Phi_2(1^5) \otimes \Phi_2(1^5) \cong  \mathbb{Z}_p^{(18)}$,

\item $\Phi_3(2111)a \otimes \Phi_3(2111)a \cong \Phi_3(2111)b_r \otimes \Phi_3(2111)b_r \cong  \mathbb{Z}_p^{(11)}$, 

\item$\Phi_3(1^5) \otimes \Phi_3(1^5)\cong  \mathbb{Z}_p^{(12)}$.
\end{enumerate}

\end{lemma}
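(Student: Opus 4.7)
The strategy is exactly as signalled by the author: each of the seven groups in the list is (up to isomorphism) an internal direct product of a smaller group appearing in Theorem \ref{SHHH} with an abelian $p$-group, so the tensor square decomposes via Proposition \ref{D1} into four pieces that are all already known. Specifically, one verifies the decompositions
\begin{align*}
\Phi_2(311)a &\cong \Phi_2(31)\times \mathbb{Z}_p, & \Phi_2(221)a &\cong \Phi_2(22)\times \mathbb{Z}_p, & \Phi_2(221)b &\cong \Phi_2(21)\times \mathbb{Z}_{p^2},\\
\Phi_2(2111)a &\cong \Phi_2(211)a\times \mathbb{Z}_p, & \Phi_2(2111)b &\cong \Phi_2(211)b\times \mathbb{Z}_p,\\
\Phi_2(2111)c &\cong \Phi_2(211)c\times \mathbb{Z}_p, & \Phi_2(2111)d &\cong \Phi_2(111)\times \mathbb{Z}_{p^2},\\
\Phi_2(1^5) &\cong \Phi_2(1^4)\times \mathbb{Z}_p, & \Phi_3(2111)a &\cong \Phi_3(211)a\times \mathbb{Z}_p,\\
\Phi_3(2111)b_r &\cong \Phi_3(211)b_r\times \mathbb{Z}_p, & \Phi_3(1^5) &\cong \Phi_3(1^4)\times \mathbb{Z}_p.
\end{align*}
(The decomposition is read off from the presentations in Annexure A: in each case the listed group has a direct factor generated by a central element disjoint from the commutator subgroup.)

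Writing such a group as $G=G_1\times G_2$, Proposition \ref{D1} gives
\[
G\otimes G \;\cong\; (G_1\otimes G_1)\times (G_1\otimes G_2)\times (G_2\otimes G_1)\times (G_2\otimes G_2).
\]
The first and last factors are obtained directly from Theorem \ref{SHHH} (for the non-abelian factor $G_1$) and from the classical formulas for tensor squares of finite abelian $p$-groups (for $G_2$). For the two mixed factors, $G_1$ and $G_2$ commute element-wise inside $G$, so each acts trivially on the other; consequently the non-abelian tensor product collapses to the ordinary abelian tensor product
\[
G_1\otimes G_2 \;\cong\; G_1^{ab}\otimes_{\mathbb{Z}} G_2^{ab}\;\cong\; G_2\otimes G_1,
\]
and the values $G_1^{ab}$ are recorded in the second column of the table of Theorem \ref{SHHH}.

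Assembling the four pieces and reducing the resulting product of cyclic $p$-groups to invariant-factor form yields exactly the seven isomorphisms claimed. As a sanity check, in the case $G=\Phi_2(311)a\cong\Phi_2(31)\times\mathbb{Z}_p$ one gets
\[
(\mathbb{Z}_{p^2}\times\mathbb{Z}_p^{(3)})\times\bigl((\mathbb{Z}_{p^2}\times\mathbb{Z}_p)\otimes\mathbb{Z}_p\bigr)^{(2)}\times\mathbb{Z}_p \;\cong\; \mathbb{Z}_{p^2}\times\mathbb{Z}_p^{(8)},
\]
which matches assertion $(i)$.

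There is no real obstacle here beyond bookkeeping: the only thing requiring genuine care is identifying the correct direct factor (in particular separating cases where the abelian factor is $\mathbb{Z}_p$ from those where it is $\mathbb{Z}_{p^2}$, as for $\Phi_2(221)b$ and $\Phi_2(2111)d$), after which every individual tensor product has already been computed in Theorem \ref{SHHH} or is an elementary abelian-tensor calculation.
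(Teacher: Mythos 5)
Your proposal is correct and is exactly the paper's argument: the paper disposes of this lemma in one line as "a direct consequence of Proposition \ref{D1} and Theorem \ref{SHHH}", and your write-up simply makes explicit the direct-product decompositions from Annexure A, the collapse of the mixed factors $G_1\otimes G_2$ to abelian tensor products, and the resulting bookkeeping. The sample computation for $\Phi_2(311)a$ checks out, and the same accounting verifies all seven assertions.
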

\begin{lemma}
If $G$ is one of the groups   $\Phi_2(41),  \Phi_2(32)a_1$, $ \Phi_2(32)a_2$ or $\Phi_8(32)$, then $\M(G)$ is isomorphic to $\{1\}, \mathbb{Z}_p$, $\mathbb{Z}_p$ or $\{1\}$ respectively,  and 
$G \wedge G$ is isomorphic to $\mathbb{Z}_{p}, \mathbb{Z}_{p^2}, \mathbb{Z}_{p^2}$ or $\mathbb{Z}_{p^2}$ respectively.
\end{lemma}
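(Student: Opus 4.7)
For all four groups I would use the identity $|G\wedge G|=|\M(G)|\cdot|G'|$ throughout. The three $\Phi_2$-groups are of class $2$ with $|G'|=p$, so by Lemma \ref{S} their exterior squares are abelian; the group $\Phi_8(32)$ lies in a class-$3$ isoclinism family with $|G'|=p^2$. Each of the four groups is $2$-generated by $\alpha,\alpha_1$ in James' notation, the remaining elements $\alpha_2,\alpha_3$ of the polycyclic sequence being iterated commutators or $p$-th powers.

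The first task is to reduce the generating set of $G\wedge G$ given by Proposition \ref{B} to the single element $[\alpha_1,\alpha^\phi]$. For the $\Phi_2$-groups this is immediate from Lemma \ref{R}(viii), since each $\alpha_k$ with $k\geq 2$ is central in $G$ and therefore contributes only trivial commutators in $\nu(G)$. For $\Phi_8(32)$ the reduction is less automatic: the generator $[\alpha_2,\alpha_1^\phi]$ is a priori independent, and a Hall--Witt identity in $\nu(G)$, parallel to the computations carried out for $\Phi_9(2111)a$ in Section $4$, would be invoked to express $[\alpha_2,\alpha_1^\phi]$ as a power of $[\alpha_1,\alpha^\phi]$ modulo $\nabla(G)$.

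Having obtained $G\wedge G=\langle[\alpha_1,\alpha^\phi]\rangle$, I would apply Lemma \ref{O}, which in class $2$ reduces to $[\alpha_1^n,\alpha^\phi]=[\alpha_1,\alpha^\phi]^n$ and in the class-$3$ case for $\Phi_8(32)$ picks up a correction term that vanishes by Lemma \ref{R}(viii) once the centrality of $\alpha_3$ is used. Taking $n$ equal to the order of $\alpha_1$ in each James presentation then bounds $|[\alpha_1,\alpha^\phi]|$ by $p$ for $\Phi_2(41)$ and by $p^2$ for each of $\Phi_2(32)a_1,\Phi_2(32)a_2,\Phi_8(32)$. For the Schur multiplier, the upper bounds come from Theorem \ref{J'} applied with $N=\langle\alpha\rangle$, a cyclic normal subgroup with $G/N$ cyclic, giving $|\M(G)|\leq|N|$; this is then refined by Theorem \ref{2} with $K=G'$, forcing $|\M(G)|\leq 1$ for $\Phi_2(41)$ and $\Phi_8(32)$ and $|\M(G)|\leq p$ for the two $\Phi_2(32)a_i$ groups. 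For the latter two, the matching lower bound $|\M(G)|\geq p$ is produced by a stem extension: lift the exponent relation $\alpha_1^{p^2}=1$ to $\alpha_1^{p^2}=z$ with $z$ a new central generator of order $p$, obtaining a central extension $1\to\mathbb{Z}_p\to H\to G\to 1$ with $\mathbb{Z}_p\subset H'$, and invoke the transgression embedding $\mathbb{Z}_p\cong H'\cap\mathbb{Z}_p\hookrightarrow\M(G)$.

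The main obstacle will be the $\Phi_8(32)$ case. Here the class-$3$ structure and the $\alpha^{(p)}$-correction in James' presentation produce extra commutator terms in Lemma \ref{O}, and establishing $\M(\Phi_8(32))=1$ requires verifying that no nontrivial stem extension of $G$ survives: after applying Theorem \ref{J'} to bound $|\M(G)|$ from above, one must show via a Hall--Witt computation that the identity $[\alpha_1,\alpha^\phi]^{p^2}=1$ already holds in $\nu(G)$ without the need for a central kernel, so that the upper bound on $|\M(G)|$ coming from Theorems \ref{J'} and \ref{2} collapses all the way to $1$.
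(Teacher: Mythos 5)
Your overall architecture (compute $G\wedge G$ as a cyclic group, use $|G\wedge G|=|\M(G)|\,|G'|$) can be made to work, but two of the steps you lean on are genuinely wrong as stated. First, the reduction of the generating set: Lemma \ref{R}(viii) says $[g_1,g_2,g^\phi]=1$ when $g$ centralizes $g_1$ and $g_2$, i.e.\ it kills $[\alpha_2,z^\phi]$ for $z$ \emph{central}, not $[\alpha_2,x^\phi]$ for arbitrary $x$ on the grounds that $\alpha_2$ is central. Centrality of $\alpha_2$ in $G$ does not make $[\alpha_2,\alpha^\phi]$ or $[\alpha_2,\alpha_1^\phi]$ vanish in $\nu(G)$ — in $\Phi_2(111)$ the element $\alpha_2$ is central and yet these two commutators are independent nontrivial generators of $G\wedge G\cong\mathbb{Z}_p^{(3)}$ (Theorem \ref{SHHH}). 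What actually kills them here is that $\alpha_2$ is a $p$-power of a generator ($\alpha^{p^3}$, $\alpha^{p^2}$, $\alpha_1^{p}$ respectively), so one writes $[\alpha_2,x^\phi]=[\alpha,x^\phi]^{p^k}=[\alpha,(x^{p^k})^\phi]$ and invokes the exponent relations; this is exactly the computation the paper performs for $\Phi_2(32)a_1$ and $a_2$. Second, your upper bound on $\M(G)$: Theorem \ref{J'} with $N=\langle\alpha\rangle$ only gives $|\M(G)|\mid |N|$ (e.g.\ $p^4$ for $\Phi_2(41)$), and Theorem \ref{2} with $K=G'$ runs in the wrong direction — it bounds $|\M(G/K)|$ above by $|\M(G)||G'\cap K|$, hence bounds $\M(G)$ from \emph{below}. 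Neither can force $\M(G)=1$. The clean way to finish $\Phi_2(41)$ and $\Phi_8(32)$ inside your own framework is to note that once $G\wedge G$ is cyclic generated by a single commutator of order at most $|G'|$, and it surjects onto $G'$, the kernel $\M(G)$ must be trivial; no external upper bound is needed.

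The paper avoids all of this for the Schur multipliers by observing that the four groups are metacyclic and quoting the classical formula \cite[Theorem 2.11.3]{GK}, after which $\M(G)=1$ immediately gives $G\wedge G\cong G'$ for $\Phi_2(41)$ and $\Phi_8(32)$, and only the two $\Phi_2(32)$ cases require a short $\nu(G)$ computation. Your route also leaves two items unverified: the stem extensions you invoke for the lower bound $p\le|\M(\Phi_2(32)a_i)|$ require checking that the lifted presentation really defines a group of order $p^6$ with the new central element lying in the derived subgroup (for $a_1$, where you adjoin $z=\alpha_1^{p^2}$, membership of $z$ in $H'$ is not obvious), and the Hall--Witt/class-$3$ correction-term analysis for $\Phi_8(32)$ is only sketched. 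As written, the proposal does not yet constitute a proof.
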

\begin{proof}
Since these groups are metacyclic, the assertion about the Schur multipliers  follows from \cite[Theorem 2.11.3]{GK}.  
Now we compute the exterior square. 
Since the Schur multiplier of the groups $G$ isomorphic to $\Phi_2\left(41\right)$ or $\Phi_8\left(32\right)$ is trivial, we have $G \wedge G \cong G'$.
Hence 
$$\Phi_2\left(41\right) \wedge \Phi_2\left(41\right) \cong \mathbb{Z}_p$$
and
$$\Phi_8\left(32\right) \wedge  \Phi_8\left(32\right) \cong \mathbb{Z}_{p^2}.$$ 

Now we consider $G = \Phi_2\left(32\right)a_1$.  We have the following identities
\begin{eqnarray*}
&&[\alpha_2,\alpha^\phi]=[\alpha^{p^2},\alpha^\phi]=[\alpha,\alpha^\phi]^{p^2} = 1=[\alpha_2^p,\alpha_1^\phi]=[\alpha_2,\alpha_1^\phi]^p,\\
&& [\alpha_2,\alpha_1^\phi]=[\alpha^{p^2},\alpha_1^\phi]=[\alpha,\alpha_1^\phi]^{p^2}=[\alpha,{(\alpha_1^{p^2})}^\phi]=1.
\end{eqnarray*}
Hence, by Proposition \ref{B}, $G \wedge G$ is a cyclic group generated by $[\alpha_1,\alpha^\phi]$. Since both $\M(G)$ and $\gamma_2(G)$ are of order $p$, it follows that  
$$G \wedge G \cong\langle [\alpha_1,\alpha^\phi] \rangle\cong \mathbb{Z}_{p^2}.$$

Similarly for $G=\Phi_2\left(32\right)a_2$,  we have
\begin{eqnarray*}
&&[\alpha_2,\alpha_1^\phi]=[\alpha_1^p,\alpha_1^\phi]=[\alpha_1,\alpha_1^\phi]^p =1,\\
&& [\alpha_2,\alpha^\phi]=[\alpha_1^p,\alpha^\phi]=[\alpha_1,\alpha^\phi]^p.
\end{eqnarray*}
Hence $G \wedge G$ is generated by $[\alpha_1,\alpha^\phi]$. As above, 
$$G \wedge G \cong  \langle [\alpha_1,\alpha^\phi] \rangle\cong\mathbb{Z}_{p^2},$$
and the proof is complete.
\hfill $\Box$

\end{proof}

\begin{lemma}
If $G$ is one of the groups $\Phi_2(311)b$ or $ \Phi_2(311)c$, then $\M(G)$ is isomorphic to $ \mathbb{Z}_p \times \mathbb{Z}_p$,  and $G \wedge G$ is isomorphic to $\mathbb{Z}_p^{(3)}$.
\end{lemma}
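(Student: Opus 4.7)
These groups are class 2 of order $p^5$ with $|G'|=p$ and abelianization $\mathbb{Z}_{p^2}\times\mathbb{Z}_p\times\mathbb{Z}_p$ (the same type as $\Phi_2(311)a$), but, unlike $\Phi_2(311)a$, they admit no decomposition as a direct product, so Theorem \ref{D} does not apply. Since $G^{ab}$ is not elementary abelian, the Blackburn--Evens framework of Theorem \ref{thmBE} is also unavailable. My plan is first to sandwich $|\M(G)|$ between $p^2$ and $p^2$ using the central-subgroup machinery of the preliminaries, and then to read off the structure of $G\wedge G$ by direct calculation inside $\nu(G)$.

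For the upper bound $|\M(G)|\le p^2$ I would apply Theorem \ref{J} to a central subgroup $Z$ of order $p$ not contained in $G'$, chosen so that $G/Z$ is a group of order $p^4$ with small Schur multiplier from Theorem \ref{SHHH}; the natural candidate is $G/Z\cong\Phi_2(31)$, for which $|\M(G/Z)|=1$. Since $|G'\cap Z|=1$ and $(G/Z)^{ab}\otimes Z$ has order $p^2$, Theorem \ref{J} then forces $|\M(G)|\le p^2$. For the matching lower bound I would use Theorem \ref{2} with a normal subgroup $K$ satisfying $G'\cap K=1$ and chosen so that $G/K$ is isomorphic to a group of order $p^4$ with Schur multiplier $\mathbb{Z}_p^{(2)}$ (for example $\Phi_2(211)a$, $b$, or $c$ from Theorem \ref{SHHH}). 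Combining the two estimates yields $|\M(G)|=p^2$ and hence $|G\wedge G|=p\cdot p^2=p^3$.

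To identify $G\wedge G$ up to isomorphism, I would work in $\nu(G)$. By Lemma \ref{S}, $G\wedge G$ is abelian, and by Proposition \ref{B} it is generated by the three commutators $[\alpha_1,\alpha^\phi]$, $[\alpha_2,\alpha^\phi]$, $[\alpha_2,\alpha_1^\phi]$. Using Lemma \ref{O} repeatedly, each $p$-th power relation in $G$ translates into an identity of the form $[\alpha_i,\alpha_j^\phi]^p=[\alpha_i,(\alpha_j^p)^\phi]$; the right-hand side is either manifestly trivial or becomes $[g,\alpha_k^\phi]$ with $g\in G'$, and by Lemma \ref{R}(viii) combined with the class-2 hypothesis such an element is trivial in $G\wedge G$. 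Hence every generator has order dividing $p$, and three generators in a group of order $p^3$ must be $\mathbb{F}_p$-independent, giving $G\wedge G\cong\mathbb{Z}_p^{(3)}$. Consequently $\M(G)=\Ker(G\wedge G\to G')$ is elementary abelian of order $p^2$, i.e.\ isomorphic to $\mathbb{Z}_p\times\mathbb{Z}_p$.

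The main obstacle will be the relation arising from the cyclic factor of order $p^2$ in $G^{ab}$: if, for instance, the presentation forces $\alpha^{p^2}\in G'$, one must show that $[\alpha_1,\alpha^\phi]^{p^2}=[\alpha_1,(\alpha^{p^2})^\phi]$ vanishes in $G\wedge G$, which requires a careful commutator manipulation via Lemma \ref{R}(viii) inside $\nu(G)$, exploiting $G'\subseteq\Z(G)$. The argument for $\Phi_2(311)c$ follows the same template, with only the coupling between the $p^2$-power generator and the commutator changed, and the sandwich bound on $|\M(G)|$ proceeds with an analogous choice of $Z$ and $K$.
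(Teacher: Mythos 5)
There is a genuine gap, and it is concentrated in the case $G=\Phi_2(311)b$. Your sandwich argument requires a central subgroup $Z$ of order $p$ with $Z\cap G'=1$ and $G/Z\cong\Phi_2(31)$ (for the upper bound), and a normal $K$ with $G'\cap K=1$ whose quotient is a $p^4$-group with multiplier $\mathbb{Z}_p^{(2)}$ (for the lower bound). No such subgroups exist for $\Phi_2(311)b$: from the presentation $[\alpha_1,\alpha]=\gamma^{p^2}=\alpha_2$, $\alpha^p=\alpha_1^p=1$, the center is $\langle\gamma\rangle\cong\mathbb{Z}_{p^3}$, which is cyclic, so its unique subgroup of order $p$ is exactly $G'=\langle\gamma^{p^2}\rangle$; since every normal subgroup of order $p$ in a $p$-group is central, every order-$p$ normal subgroup meets $G'$ nontrivially, and the only quotient by such a subgroup is the abelian group $G/G'$. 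The paper circumvents this by applying Theorem \ref{J} with $K=\Z(G)$ (the full cyclic center, so $|G'\cap K|=p$, which the divisibility statement absorbs) and Theorem \ref{2}(ii) with $K=\Phi(G)$, paying the cost $d(G'\cap K)=1$ against $d(\M(G/\Phi(G)))=3$. Your plan does go through for $\Phi_2(311)c$ (e.g.\ $Z=\langle\alpha^{p^2}\alpha_2\rangle$ gives $G/Z\cong\Phi_2(31)$, and $K=\langle\alpha^{p}\rangle$ or $\langle\alpha^{p^2}\rangle$ meets $G'$ trivially), but the lemma covers both groups and the harder one is exactly where your mechanism breaks.

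Two secondary inaccuracies: the abelianization of $\Phi_2(311)c$ is $\mathbb{Z}_{p^3}\times\mathbb{Z}_p$, not $\mathbb{Z}_{p^2}\times\mathbb{Z}_p\times\mathbb{Z}_p$ (only $\Phi_2(311)b$ matches $\Phi_2(311)a$ in that respect). More importantly for the second half of your argument, $\Phi_2(311)b$ has a polycyclic generating sequence of length four ($\alpha,\alpha_1,\alpha_2,\gamma$), so Proposition \ref{B} yields six generators of $G\wedge G$, not three; the paper's calculation shows that $[\alpha_2,\alpha^\phi]$ and $[\alpha_2,\alpha_1^\phi]$ are actually \emph{trivial} there (since $\alpha_2=\gamma^{p^2}$ and $[\gamma,(\alpha^{p^2})^\phi]=1$), and the correct generating triple is $\{[\alpha_1,\alpha^\phi],[\gamma,\alpha^\phi],[\gamma,\alpha_1^\phi]\}$. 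So even granting $|G\wedge G|=p^3$, the three elements you name do not generate $G\wedge G$ for $\Phi_2(311)b$, and the final identification would not follow as written.
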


\begin{proof} 
For the  group $G=\Phi_2(311)b$,  taking $K=\Z(G)$ in  Theorem \ref{J}, we have $|\M(G)| \leq p^2$. On the other hand,   taking $K=\Phi(G)$ in Theorem \ref{2}(ii), we get $d(\M(G)) \geq 2$. 
Hence, 
$$\M(G) \cong \mathbb{Z}_p \times \mathbb{Z}_p.$$
The group $G \wedge G$ is generated by the set$$\{[\alpha_1,\alpha^\phi],[\alpha_2,\alpha^\phi],[\alpha_2,\alpha_1^\phi],[\gamma, \alpha^\phi], [\gamma,\alpha_1^\phi],[\gamma,\alpha_2^\phi]\}.$$
By Lemma \ref{R}(viii),
$$[\alpha_2,\gamma^\phi]=[\alpha_1,\alpha,\gamma^\phi]=1.$$
We get the following set of identities: 
\begin{eqnarray*} 
&&[\gamma,\alpha^\phi]^p=[\gamma,{(\alpha^p)}^\phi]=1=[\gamma,{(\alpha_1^p)}^\phi]=[\gamma,\alpha_1^\phi]^p, \\
&&[\alpha_2,\alpha^\phi]=[\gamma^{p^2},\alpha^\phi]=[\gamma,\alpha^\phi]^{p^2}=1, \\
&&[\alpha_2,\alpha_1^\phi]=[\gamma^{p^2},\alpha_1^\phi]=[\gamma,\alpha_1^\phi]^{p^2}=1,\\
&&[\alpha,\alpha_1^\phi]^p=[\alpha^p,\alpha_1^\phi]=1.
\end{eqnarray*}
Hence $G \wedge G$ is generated by $\{[\alpha_1,\alpha^\phi],[\gamma, \alpha^\phi], [\gamma,\alpha_1^\phi]\}$. Since the nilpotency class of $G$ is $2$, by Lemma \ref{S}, $G \wedge G$ is abelian.
Hence,
  $$G \wedge G \cong \mathbb{Z}_p^{(3)}.$$

Now consider the group $G=\Phi_2(311)c$. Again using Theorem \ref{J} with $K=G'$, we get $|\M(G)| \leq p^2$.  By Theorem \ref{2}(ii) with $K=\langle \alpha^p \rangle$, we have $d(\M(G)) \geq 2$. Hence 
$$\M(G) \cong \mathbb{Z}_p \times \mathbb{Z}_p.$$ Since $|G'| = p$, it follows that $G \wedge G$ is of order $p^3$.

Notice that
\begin{align*}
[\alpha_2, \alpha_1^\phi]^p &= [\alpha_2^p, \alpha_1^\phi]=1, \\
[\alpha_2, \alpha^\phi]^p & =[\alpha_2^p, \alpha^\phi] =1, \\
[\alpha_1, \alpha^\phi]^p  &=[\alpha_1^p, \alpha^\phi] =1.
\end{align*}
Consequently, $G \wedge G$ is generated by $\{[\alpha_1,\alpha^\phi],[\alpha_2, \alpha^\phi], [\alpha_2,\alpha_1^\phi]\}$, which shows that
$$G \wedge G \cong \mathbb{Z}_p^{(3)}.$$
The proof is now complete.
\hfill$\Box$

\end{proof}

\begin{lemma}
If $G$ is one of the groups $\Phi_3(311)a, \Phi_3(311)b_r$ or $ \Phi_2(221)d$, then $\M(G)$ is isomorphic to $\mathbb{Z}_p , \mathbb{Z}_p$ or $\mathbb{Z}_p^{(3)}$ respectively, and $G \wedge G$ is isomorphic to $\mathbb{Z}_p^{(3)}, \mathbb{Z}_p^{(3)}$ or $\mathbb{Z}_{p^2} \times \mathbb{Z}_p \times \mathbb{Z}_p$ respectively.
\end{lemma}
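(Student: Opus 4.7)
My plan is to follow the same two-tier strategy that dominates this paper: first pin down $|\M(G)|$ by sandwiching it between upper and lower bounds obtained from Theorems \ref{J} and \ref{2}(ii), then compute $G\wedge G$ inside the group $\nu(G)$ using Proposition \ref{B} together with the commutator identities in Lemma \ref{R}, Lemma \ref{O} and Hall--Witt. Throughout, since we know $|G\wedge G|=|\M(G)|\cdot|G'|$, having $|\M(G)|$ in hand gives the target order of $G\wedge G$ and reduces the second tier to showing that a minimal generating set realizes that order with the correct cyclic factors.

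For $G=\Phi_3(311)a$ and $G=\Phi_3(311)b_r$, both of class $3$ with $|G'|=p^2$, I would apply Theorem \ref{J} with $K=\Z(G)$ (which is cyclic of order $p$ here) and use the already known Schur multiplier of the quotient $G/\Z(G)$ of order $p^4$ from Theorem \ref{SHHH} to force $|\M(G)|\le p$. Theorem \ref{2}(ii) with $K=\Phi(G)$ then gives $d(\M(G))\ge 1$, yielding $\M(G)\cong\mathbb{Z}_p$ and hence $|G\wedge G|=p^3$. Proposition \ref{B} provides at most six generators $[g_i,g_j^\phi]$; Lemma \ref{R}(viii) applied to the central element of $G$ kills those commutators in which it appears, a Hall--Witt computation (mirroring the one done for $\Phi_9(2111)a$ in Section~4) collapses one further generator, and the $p$-th power relations in Lemma \ref{O} together with the presentations from Annexure~A show each surviving generator has order $p$. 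Counting then forces $G\wedge G\cong\mathbb{Z}_p^{(3)}$.

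For $G=\Phi_2(221)d$, which has nilpotency class $2$ (so by Lemma \ref{S} $G\wedge G$ is abelian) and $|G'|=p$, the upper bound $|\M(G)|\le p^3$ comes from Theorem \ref{J} applied with $K=\langle\alpha^p,\alpha_1^p\rangle$ (or whichever central subgroup makes $A=G/K$ have a computable multiplier from Theorem \ref{SHHH}), while Theorem \ref{2}(ii) with $K=G'$ yields $d(\M(G))\ge 3$ by comparison with the multiplier of the elementary abelian quotient $G^{ab}$. This pins $\M(G)\cong\mathbb{Z}_p^{(3)}$ and forces $|G\wedge G|=p^4$. I would then list the polycyclic-generator commutators in $[G,G^\phi]$, use Lemma \ref{R}(viii) to eliminate those that pair a central generator with an element of $G'$, and use Lemma \ref{O} on the defining power relations to identify exactly one commutator whose $p$-th power is nontrivial (coming from a relation $\alpha_i^p=\beta_j$ with $\beta_j\in G'$), while all others have order $p$; this gives the split $\mathbb{Z}_{p^2}\times\mathbb{Z}_p\times\mathbb{Z}_p$.

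The main obstacle, as in the analogous $\Phi_4(221)d_{\frac{1}{2}(p-1)}$ and $\Phi_4(221)f_0$ cases handled earlier, will be the $\Phi_2(221)d$ computation: matching the abstract Schur multiplier bound to the concrete factor-by-factor decomposition of $G\wedge G$ requires a careful bookkeeping of which $p$-th power relation in the presentation survives into $\nu(G)$. In particular I need to verify that $[\alpha_i,\alpha_j^\phi]^{p^2}=1$ is genuinely achieved (not $1$ already at $p$), which amounts to showing the corresponding commutator is not killed by the extra identifications imposed in passing from the cover to $G$; a direct computation in $\nu(G)$ together with the Hall--Witt identity should close this gap, and the remaining two $\Phi_3(311)$ cases will follow from a routine adaptation of the argument already carried out for groups of the same class and order type.
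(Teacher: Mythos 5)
Your toolkit is the right one, but the steps that are supposed to pin down $|\M(G)|$ do not actually deliver the bounds you claim, and this is where the real content of the lemma lives. For $G=\Phi_3(311)a$ or $\Phi_3(311)b_r$, Theorem \ref{J} with $K=\Z(G)$ gives $|\M(G)|\cdot|G'\cap K| \mid |\M(A)||\M(K)||A^{ab}\otimes K|$ with $A=G/\Z(G)\cong\Phi_2(211)c$, i.e.\ $|\M(G)|\cdot p \mid p^2\cdot 1\cdot p^2$, so only $|\M(G)|\le p^3$ — not $\le p$. Likewise Theorem \ref{2}(ii) with $K=\Phi(G)$ reads $d(\M(G/\Phi(G)))\le d(\M(G))+d(G'\cap\Phi(G))$, and since $G'\subseteq\Phi(G)$ with $d(G')=2$ this becomes $1\le d(\M(G))+2$, which is vacuous. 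The paper's actual route is the reverse of yours: the \emph{lower} bound $p\le|\M(G)|$ comes from Theorem \ref{2}(i) with $K=\Z(G)$ (using $|\M(\Phi_2(211)c)|=p^2$), and the \emph{upper} bound $|G\wedge G|\le p^3$ comes from the explicit generator computation you also describe. Your generator computation would salvage the upper bound, but as written you have no valid lower bound, so you cannot exclude $\M(G)=1$ or distinguish $\mathbb{Z}_p^{(3)}$ from a smaller quotient of the group generated by your three elements of order dividing $p$.

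The gap is more serious for $\Phi_2(221)d$. Here $G^{ab}\cong\mathbb{Z}_{p^2}\times\mathbb{Z}_{p^2}$ is not elementary abelian, and Theorem \ref{2}(ii) with $K=G'$ gives $d(\M(G^{ab}))=1\le d(\M(G))+1$, again vacuous; no choice of $K$ in \ref{2}(ii) or \ref{J} that you name produces $d(\M(G))\ge 3$ or $|\M(G)|\le p^3$. The paper does not pin $\M(G)$ first at all: it computes $G\wedge G$ directly, and the crucial lower-bound information — that $[\alpha_1,\alpha^\phi]$ has order exactly $p^2$ and that the three generators are independent — is obtained from the natural epimorphisms $[G,G^\phi]\to[G^{ab},(G^{ab})^\phi]$ and $[G,G^\phi]\to[\Phi_2(211)c,(\Phi_2(211)c)^\phi]$ (using $G/\gen{\alpha_1^p}\cong\Phi_2(211)c$ and Theorem \ref{SHHH}). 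You flag exactly this verification as the obstacle but propose to close it with ``a direct computation in $\nu(G)$ together with the Hall--Witt identity''; Hall--Witt only produces relations (upper bounds on the exterior square) and cannot certify that an element is \emph{not} killed. Without the quotient-epimorphism device, your argument establishes only that $G\wedge G$ is an abelian group generated by three elements of orders dividing $p^2,p,p$, which is consistent with several wrong answers.
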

\begin{proof}
Let  $G$ be one of the groups $\Phi_3(311)a$ or $\Phi_3(311)b_r$. Then taking $K=\Z(G)$
in Theorem \ref{2}(i),  $p$ divides $|\M(G)|$. Since $|G'| = p^2$, it follows that $|G \wedge G| \geq p^3$.

For the group $G =\Phi_3\left(311\right)a$, by Lemma \ref{R}(viii), $[\alpha_2,\alpha_3^\phi]=[\alpha_1,\alpha,\alpha_3^\phi]=1$.  
It now follows by Proposition \ref{B} that  $G \wedge G$ is generated by the set 
$$\{[\alpha_1,\alpha^\phi],[\alpha_2,\alpha^\phi],[\alpha_2,\alpha_1^\phi],[\alpha_3, \alpha^\phi], [\alpha_3,\alpha_1^\phi]\}.$$
 The following identities hold:
\begin{eqnarray*}
&& [\alpha_3,\alpha^\phi]=[\alpha^{p^2},\alpha^\phi]=[\alpha,\alpha^\phi]^{p^2}=1, \\
&&[\alpha_3,\alpha_1^\phi]^p=[\alpha_3^p,\alpha_1^\phi]=1,\\
&& [\alpha_2,\alpha^\phi]^p= [\alpha_2^p,\alpha^\phi]=1= [\alpha_2^p,\alpha_1^\phi]=[\alpha_2,\alpha_1^\phi]^p, \\
&& [\alpha_1,\alpha^\phi]^p = [\alpha_1^p,\alpha^\phi]=1, \\
&&[\alpha_3,\alpha_1^\phi]=[\alpha^{p^2},\alpha_1^\phi]=[\alpha,\alpha_1^\phi]^{p^2}=1.
\end{eqnarray*}
Hence $G \wedge G$ is generated by $\{[\alpha_1,\alpha^\phi], [\alpha_2,\alpha^{\phi}], [\alpha_2,\alpha_1^{\phi}]\}$. Since, in view of Lemma \ref{R}(vi), $G \wedge G$ is abelian,  we have
$$G \wedge G \cong \mathbb{Z}_p^{(3)}.$$ 
Consequently,
$$\M(G) \cong [\alpha_1,\alpha_2^{\phi}] \rangle \cong \mathbb{Z}_p.$$

The computation for the group $G=\Phi_3(311)b_r$ goes on the same lines for $r = 1, \nu$.

Finally we consider the group $G=\Phi_2(221)d$. The group
$G \wedge G$ is generated by the set 
$$\{[\alpha_1,\alpha^{\phi}], [\alpha_2,\alpha^{\phi}] , [\alpha_1,\alpha_2^{\phi}]\}.$$
By Lemma \ref{S}, $G \wedge G$ is abelian. We have the identities:
\begin{eqnarray*}
&& [\alpha_2,\alpha_1^\phi]^p=[\alpha_2^p,\alpha_1^\phi]=1= [\alpha_2^p,\alpha^\phi]=[\alpha_2,\alpha^\phi]^p,\\
&&[\alpha_1,\alpha^\phi]^{p^2}=[\alpha_1^{p^2},\alpha^\phi]=1.
\end{eqnarray*}

Now the natural epimorphism $[G,  G^\phi] \rightarrow [G^{ab},  (G^{ab})^\phi]$ implies that  the order of $[\alpha_1,\alpha^\phi]$ is $p^2$ in $G \wedge G$.
Notice that $G/\gen{\alpha_1^p} \cong \Phi_2(211)c$. Consider the natural epimorphism 
$$[G, G^\phi] \rightarrow [G/\gen{\alpha_1^p}, 
(G/\gen{\alpha_1^p})^\phi] \cong [\Phi_2(211)c,   (\Phi_2(211)c)^\phi],$$ which induces an epimorphism $G \wedge G \to (\Phi_2(211)c) \wedge (\Phi_2(211)c)$. By Theorem \ref{SHHH}, we know that $ \Phi_2(211)c \wedge \Phi_2(211)c \cong   \mathbb{Z}_p ^{(3)}$. 
Hence the generators $[\alpha_1,\alpha^{\phi}], [\alpha_2,\alpha^{\phi}] , [\alpha_1,\alpha_2^{\phi}]$ of $G \wedge G$ are non-trivial, independent and $[\alpha_2,\alpha^{\phi}],[\alpha_1,\alpha_2^{\phi}]$ have order $p$ in $G \wedge G$. 
As a consequence, we get
$$G \wedge G \cong \mathbb{Z}_{p^2} \times \mathbb{Z}_p \times \mathbb{Z}_p,$$
 which, by observing the fact that $G' = \gen{[\alpha_1,\alpha]}$, gives
$$\M(G) \cong \mathbb{Z}_p \times \mathbb{Z}_p \times \mathbb{Z}_p.$$
This completes  the proof.
\hfill$\Box$

\end{proof}

\begin{lemma}
If $G$ is one of the groups $\Phi_3(221)a$ or $  \Phi_2(221)c$, then $\M(G)$ is isomorphic to $\mathbb{Z}_p$ or $ \mathbb{Z}_{p^2} \times \mathbb{Z}_p$ respectively, and $G \wedge G$ is isomorphic to $\mathbb{Z}_p^{(3)}$ or $\mathbb{Z}_{p^2} \times \mathbb{Z}_p \times \mathbb{Z}_p$ respectively.
\end{lemma}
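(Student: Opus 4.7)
The plan is to handle each group separately, following the template established throughout this section.

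For $G = \Phi_3(221)a$ (class $3$ with $|G'| = p^2$), I would first squeeze $|\M(G)|$ between two bounds. Applying Theorem \ref{J} with $K$ a maximal central cyclic subgroup contained in $G'$ should yield $|\M(G)| \leq p$, while Theorem \ref{2}(i) applied to $K = \Z(G)$ forces $p \mid |\M(G)|$, giving $|\M(G)| = p$ and hence $|G \wedge G| = p^3$. Then I would write out the generating set for $G \wedge G$ from Proposition \ref{B}, kill the commutators involving central generators via Lemma \ref{R}(viii), reduce the remaining ones using power expansions from Lemma \ref{O}, and observe that the three surviving generators all have order $p$. Since $|G \wedge G| = p^3$ and it is generated by three elements of order $p$, Lemma \ref{R}(vi) makes it abelian (or it becomes so automatically by counting) and so $G \wedge G \cong \mathbb{Z}_p^{(3)}$.

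For $G = \Phi_2(221)c$ (class $2$, $|G'| = p$, so $|G \wedge G| = p \cdot |\M(G)|$), by Lemma \ref{S} we immediately know $G \wedge G$ is abelian, which simplifies matters. I would bound $|\M(G)|$ above by $p^3$ using Theorem \ref{J} with an appropriate central subgroup (e.g.\ $K = \langle \alpha_1^p \rangle$ together with the fact that $G/K$ is one of the groups from Theorem \ref{SHHH}), and establish $d(\M(G)) \geq 2$ via Theorem \ref{2}(ii) with $K = \Phi(G)$. Proposition \ref{B}, together with Lemma \ref{R}(viii) and Lemma \ref{O} power-identities, reduces the generating set of $G \wedge G$ to $\{[\alpha_1,\alpha^\phi], [\alpha_2,\alpha^\phi], [\alpha_2,\alpha_1^\phi]\}$ (or an analogous triple depending on the presentation), with the last two already seen to have order dividing $p$.

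The decisive step, and the main obstacle, is showing that $[\alpha_1,\alpha^\phi]$ has order exactly $p^2$ rather than $p$ in $G \wedge G$. For this I would mimic the trick used for $\Phi_2(221)d$ in the preceding lemma: pick a normal subgroup $N$ (such as $\langle \alpha_2 \rangle$ or a similar central subgroup of order $p$) so that $G/N$ is isomorphic to a group already treated in Theorem \ref{SHHH} whose exterior square contains $[\alpha_1,\alpha^\phi]$ as an element of order $p^2$; the induced surjection $G \wedge G \to (G/N) \wedge (G/N)$ then forces the same order below. Combined with the abelian structure and the upper bound $p \cdot |\M(G)| \leq p^4$, this yields $G \wedge G \cong \mathbb{Z}_{p^2} \times \mathbb{Z}_p \times \mathbb{Z}_p$. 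Quotienting by the image of $G'$ (which is generated by one of the three generators since $G'$ is cyclic of order $p$) produces $\M(G) \cong \mathbb{Z}_{p^2} \times \mathbb{Z}_p$, completing the proof. The bookkeeping of which Lemma \ref{O} identities annihilate which generators, and the correct choice of quotient to witness the order-$p^2$ element, will be the most delicate parts of the argument.
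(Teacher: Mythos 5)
Your proposal has two genuine gaps, both at the points where the actual content of the lemma is decided. For $G=\Phi_3(221)a$, the claimed upper bound $|\M(G)|\le p$ does not follow from Theorem \ref{J}: with $K=\langle\alpha_3\rangle$ (the only central cyclic subgroup inside $G'$) one has $A=G/K$ of order $p^4$ with $\M(A)\cong\mathbb{Z}_p^{(2)}$ and $A^{ab}\otimes K\cong\mathbb{Z}_p^{(2)}$, so Theorem \ref{J} only yields $|\M(G)|\le p^3$ (and Theorem \ref{J'} with $N=\langle\alpha,\alpha_2,\alpha_3\rangle$ only $\le p^2$). The paper closes this gap by an entirely different device: it writes $G=N\rtimes T$ with $N\cong\Phi_2(21)$, $T\cong\mathbb{Z}_{p^2}$, notes $\M(N)=1$, and uses the Tahara sequence (Theorem \ref{1}) to get $\M(G)\cong \Ho^1(T,\Hom(N,\mathbb{C}^*))\cong\mathbb{Z}_p$ exactly. (Your computation of $G\wedge G$ could in principle supply the missing upper bound, but only after a Hall--Witt computation showing $[\alpha_3,\alpha_1^\phi]=[\alpha,\alpha_1^\phi]^p=1$, which your sketch does not mention; the power identities of Lemma \ref{O} alone do not kill $[\alpha_1,\alpha^\phi]^p=[\alpha_1^p,\alpha^\phi]$ since $\alpha_1$ has order $p^2$ here.)

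For $G=\Phi_2(221)c$ the situation is worse: your bounds $|\M(G)|\le p^3$ and $d(\M(G))\ge2$ leave $\mathbb{Z}_p^{(2)}$, $\mathbb{Z}_p^{(3)}$ and $\mathbb{Z}_{p^2}\times\mathbb{Z}_p$ all open, and the step you single out as decisive cannot be carried out as described. The element of order $p^2$ in $G\wedge G$ is not $[\alpha_1,\alpha^\phi]$ (which has order $p$ because $\alpha_1^p=1$) but $[\gamma,\alpha^\phi]$, whose $p$-th power is $[\gamma^p,\alpha^\phi]=[\alpha_2,\alpha^\phi]$; since $[\gamma,\alpha]=1$ in $G$, this element lies in $\M(G)=\Ker f'$ and dies in $G^{ab}\wedge G^{ab}\cong\mathbb{Z}_p^{(3)}$ and in every quotient of the kind used for $\Phi_2(221)d$, so no surjection onto a smaller exterior square can certify that it is nontrivial. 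The paper instead obtains the $\mathbb{Z}_{p^2}$ directly inside $\M(G)$ from $\Ho^1(T,\Hom(N,\mathbb{C}^*))\cong\mathbb{Z}_{p^2}$ for the decomposition $N=\langle\alpha_1,\gamma\rangle$, $T=\langle\alpha\rangle$, and only then reads off $G\wedge G$ from the order count. The semidirect-product cohomology argument is the missing idea in both halves of your proof.
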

\begin{proof}
The group $G =\Phi_3(221)a $ is a semidirect product of its  subgroups
$$N=\gen{\alpha_2,\alpha, \alpha_3} \cong \Phi_2(21)$$ and 
$$T=\gen{\alpha_1} \cong \mathbb{Z}_{p^2},$$
 where $N$ is normal.

By Theorem \ref{SHHH}, we know that $\M(N)={1}$.  Now using Theorem \ref{1}, we get the following exact sequence
$$1 \rightarrow \Ho^1(T, \Hom(N,\mathbb{C}^*)) \rightarrow \Ho^2(G,\mathbb{C}^*) \rightarrow 1.$$
Hence, 
$$\M(G) \cong \Ho^1(T, \Hom(N,\mathbb{C}^*)).$$ 
Notice that  $\Hom(N,\mathbb{C}^*) \cong N/N' \cong \gen{\alpha N', \alpha_2 N'}$.
Let $\zeta$ be a primitive $p$-th root of unity and $\Hom(N,\mathbb{C}^*) \cong \langle \phi_1,\phi_2 \rangle$, where $\phi_i:N \rightarrow \mathbb{C}^*$, $i = 1, 2$, are defined by setting
$$\phi_1(\alpha)=\zeta, \;\;\phi_1(\alpha_2)=1$$ 
and 
$$\phi_2(\alpha)=1, \;\;\phi_2(\alpha_2)=\zeta^{-1}.$$ 
Recall that  $T$  acts on $\Hom(N,\mathbb{C}^*)$ as follows.
For $\phi_1 \in \Hom(N, \mathbb{C}^*$), we set 
$$^{\alpha_1}{\phi_1}(\alpha)=\phi_1(\alpha_1^{-1} \alpha \alpha_1)=\phi_1(\alpha)$$ 
and 
$$^{\alpha_1}{\phi_1}(\alpha_2)=\phi_1(\alpha_1^{-1} \alpha_2 \alpha_1)=\phi_1(\alpha_2).$$ 

So, $^{\alpha_1}{\phi_1}=\phi_1$. Similarly the action of $\alpha_1$ on $\phi_2$ is given by $^{\alpha_1}{\phi_2}=\phi_1 \phi_2$. 

Define the map $Norm:\Hom(N,\mathbb{C}^*)\rightarrow \Hom(N,\mathbb{C}^*)$  given 
by
$$Norm(\phi)= \;^{(1+ \alpha_1 + \alpha_1^2 +\cdots+\alpha_1^{p-1})}{\phi}.$$
It is easy to check that $\Ker(Norm)=\Hom(N, \mathbb{C}^*)$.
Define $\beta:\Hom(N,\mathbb{C}^*)\rightarrow \Hom(N,\mathbb{C}^*)$ by 
$$\beta(\phi)=\;^{(\alpha_1-1)}{\phi}$$
Then $\im(\beta)=\langle \phi_1 \rangle$. It is a general fact  (see Step 3 in the proof of Theorem 5.4 of \cite{H}) that
$$\Ho^1(T, \Hom(N,\mathbb{C}^*)) \cong \frac{\Ker(Norm)}{\im (\beta)} \cong \mathbb{Z}_p.$$
Hence, $$\M(G) \cong \mathbb{Z}_p.$$
 
We have the following identities:
\begin{eqnarray*}
&&[\alpha_2,\alpha_3^\phi]=[\alpha_1,\alpha,\alpha_3^\phi]=1, ~\text{by Lemma \ref{R}(viii)}.\hspace{2cm} \\
&&[\alpha_3,\alpha^\phi]=[\alpha^p, \alpha^\phi]=[\alpha,\alpha^\phi]^p=1,\\
&&[\alpha_2,\alpha^\phi]^p=[\alpha_2^p,\alpha^\phi]=1=[\alpha_2^p,\alpha_1^\phi]=[\alpha_2,\alpha_1^\phi]^p,\\
&&[\alpha_3, \alpha_1^\phi]=[\alpha^p, \alpha_1^\phi]=[\alpha, \alpha_1^\phi]^p.
\end{eqnarray*}
Now we get
\begin{eqnarray*}
[\alpha^{-1},\alpha_1^{-1},\alpha_2^\phi]^{\alpha_1} &=& [\alpha_1\alpha\alpha_2^{-1}\alpha^{-1}\alpha_1^{-1},\alpha_2^{\phi}]^{\alpha_1}=[\alpha\alpha_2^{-1}\alpha^{-1},\alpha_2^{\phi}]\\
&=& [\alpha_3\alpha_2^{-1},\alpha_2^{\phi}]=[\alpha_3,\alpha_2^\phi][\alpha_2,\alpha_2^\phi]^{-1}\\
&=& 1.
\end{eqnarray*}
By Hall-Witt identity,   
\begin{eqnarray*}
1 &=& [\alpha_2,\alpha,\alpha_1^\phi]^{\alpha^{-1}}[\alpha^{-1},\alpha_1^{-1},\alpha_2^\phi]^{\alpha_1}\hspace{4cm}\\
&=&[\alpha_3,\alpha_1^\phi].
\end{eqnarray*}
Hence, $G \wedge G$ is generated by the set
$$\{[\alpha_1,\alpha^\phi],[\alpha_2,\alpha^\phi],[\alpha_2,\alpha_1^\phi]\}.$$ 
By Lemma \ref{R}(vi) $G \wedge G$ is abelian, and therefore 
$$G \wedge G \cong \mathbb{Z}_p^{(3)}.$$

The group $G=\Phi_2(221)c$ is a semidirect product of its normal subgroup $N=\langle \alpha_1, \gamma \rangle$ and $T=\langle \alpha \rangle$. 
Now using Theorem \ref{1}, we get the following exact sequence
$$1 \rightarrow \Ho^1(T, \Hom(N,\mathbb{C}^*)) \rightarrow \Ho^2(G,\mathbb{C}^*) \rightarrow \Ho^2(N,\mathbb{C}^*)^T.$$
 As above, $\Ho^1(T, \Hom(N,\mathbb{C}^*)) \cong \mathbb{Z}_{p^2}$, which embeds in $\M(G)$. Now by Theorem \ref{2}(ii), taking $K=\langle \gamma^p \rangle$, we have $2 \leq d(\M(G))$. By Theorem \ref{J}, taking $K=\langle \gamma \rangle$, we have $|\M(G)| \leq p ^3$. 
Hence 
$$\M(G) \cong \mathbb{Z}_{p^2} \times \mathbb{Z}_p.$$ 

We have the following identities:
\begin{eqnarray*}
&&[\alpha_2,\gamma^\phi]=[\alpha_1,\alpha,\gamma^\phi]=1, ~\text{by Lemma \ref{R}(viii)},\hspace{2cm}\\
&& [\alpha_2, \alpha_1^\phi]=[\gamma^p, \alpha_1^\phi]=[\gamma, \alpha_1^\phi]^p=[\gamma, {(\alpha_1^p)}^\phi]=1,\\
&& [\alpha_2, \alpha^\phi]=[\gamma^p, \alpha^\phi]=[\gamma, \alpha^\phi]^p,\\ 
&&[\alpha_1,\alpha^\phi]^p=[\alpha_1^p,\alpha^\phi]=1.
\end{eqnarray*}
Hence $G \wedge G$ is generated by the set
$$\{[\alpha_1,\alpha^\phi],[\gamma,\alpha^\phi],[\gamma,\alpha_1^\phi]\}.$$ Since the nilpotency class of $G$ is $2$, by Lemma \ref{S}, $G \wedge G$ is abelian; hence
$$G \wedge G \cong \langle[\gamma,\alpha^\phi] \rangle \times \langle [\gamma,\alpha_1^\phi]\rangle \times \langle [\alpha_1,\alpha^\phi]\rangle \cong \mathbb{Z}_{p^2} \times \mathbb{Z}_p \times \mathbb{Z}_p.$$ The proof is now complete.
\hfill$\Box$

\end{proof}

\begin{lemma}
If $G$ is one of the groups $\Phi_3(2111)c, \Phi_3(2111)d$ or $\Phi_3(2111)e$, then $\M(G)$ is isomorphic to $\mathbb{Z}_p^{(3)}$, $\mathbb{Z}_p \times \mathbb{Z}_p$ or $\mathbb{Z}_p \times \mathbb{Z}_p$ respectively, and $G \wedge G$ is isomorphic to $\mathbb{Z}_p^{(5)}, \mathbb{Z}_p^{(4)}$ or $\mathbb{Z}_p^{(4)}$ respectively.
\end{lemma}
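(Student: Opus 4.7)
The plan is to handle the three groups separately but via the same blueprint used throughout Section 5: compute $G \wedge G$ by working inside $[G, G^\phi]$ modulo $\nabla(G)$, and then deduce $\M(G)$ from the exact sequence $1 \to \M(G) \to G\wedge G \to G' \to 1$, cross-checking with independent bounds on $\M(G)$ obtained from Theorems \ref{2}, \ref{J} and \ref{1}. Since each of $\Phi_3(2111)c,d,e$ has $|G'|=p^2$, it will suffice to pin down $|G\wedge G|$ together with its isomorphism type.

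First I would write down the generating set $\{[\alpha_i,\alpha_j^\phi]:i>j\}$ from Proposition \ref{B} and trim it using Lemma \ref{R}(viii), which forces $[x,y^\phi]=1$ whenever $y$ is central and $x\in G'$; in each of these groups there is one central generator (playing the role that $\alpha_3$ or $\alpha_4$ played earlier), so a couple of generators are eliminated immediately. Next I would apply Lemma \ref{O}: from the defining power relations in \cite{RJ} (all $p$-th powers of the generators land in the centre or are trivial) the formula $[x^n,y^\phi]=[x,y^\phi]^n\cdots$ collapses to $[x^p,y^\phi]=[x,y^\phi]^p$ modulo higher terms, forcing every surviving generator of $G\wedge G$ to have order $p$. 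This reduces $G\wedge G$ to an elementary abelian group whose rank I need to determine.

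To cut down further, I would apply the Hall--Witt identity in $\nu(G)$ in the form used in the earlier lemmas, namely
\[
1 = [\alpha_i,\alpha,\alpha_j^\phi]^{\alpha^{-1}}[\alpha^{-1},\alpha_j^{-1},\alpha_i^\phi]^{\alpha_j}[\alpha_j,\alpha_i^{-1},(\alpha^{-1})^\phi]^{\alpha_i},
\]
which in a class-$3$ setting produces a linear relation among the generators of $G\wedge G$; this is exactly how $[\alpha_3,\alpha_1^\phi]$ or $[\alpha_4,\alpha_1^\phi]$ got identified with a combination of the others in the maximal-class cases. The exact number of independent relations will be the key calculation: for $\Phi_3(2111)c$ I expect to end up with $5$ independent generators, for $\Phi_3(2111)d$ and $\Phi_3(2111)e$ with $4$, matching the stated orders $p^5$ and $p^4$.

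For the lower bounds on $\M(G)$, I would apply Theorem \ref{2}(ii) with $K=\Phi(G)$ (or $K$ a maximal cyclic central subgroup) to force $d(\M(G))\ge 3$ in the first case and $\ge 2$ in the other two; for the upper bounds, Theorem \ref{J} with $K$ a suitable central subgroup, or the semidirect-product exact sequence of Theorem \ref{1} applied to a decomposition $G=N\rtimes T$ with $N$ of order $p^4$ from Section 2's table, gives $|\M(G)|\le p^3,p^2,p^2$ respectively. Combining the equality $|G\wedge G|=|\M(G)|\cdot|G'|$ with these bounds and the commutator computations pins down $\M(G)$. The main obstacle will be the Hall--Witt bookkeeping for $\Phi_3(2111)d$ and $\Phi_3(2111)e$: the extra relation $\alpha_1^p=\alpha_3$ (or its analogue) in these groups introduces an additional identification among the generators of $G\wedge G$ beyond what occurs in $\Phi_3(2111)c$, and verifying that this identification is the only new one (as opposed to two new ones, which would make $G\wedge G$ smaller than claimed) will require care; I would handle this by exhibiting an explicit epimorphism from $G\wedge G$ onto $\mathbb{Z}_p^{(4)}$ via the quotient $G/\Z(G)\cong \Phi_3(211)a$ or $\Phi_3(211)b_r$ from Theorem \ref{SHHH}, which forces the rank to be at least $4$, and then matching with the upper bound.
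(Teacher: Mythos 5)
Your blueprint is the same as the paper's: generate $G\wedge G$ via Proposition \ref{B}, trim with Lemma \ref{R}(viii), kill orders with Lemma \ref{O}, extract one extra relation from Hall--Witt, and pin down $|\M(G)|$ numerically from Theorems \ref{2} and \ref{J}. However, two of your concrete choices would fail as written. For $\Phi_3(2111)c$ you need $d(\M(G))\ge 3$ (equivalently $|\M(G)|\ge p^3$), and neither of your proposed subgroups delivers it: with $K=\Phi(G)$, Theorem \ref{2}(ii) gives $3=d(\M(G/\Phi(G)))\le d(\M(G))+d(G'\cap\Phi(G))=d(\M(G))+2$, i.e.\ only $d(\M(G))\ge 1$; with $K=\langle\gamma\rangle$ (the maximal cyclic central subgroup) you get $G/K\cong\Phi_2(111)$ and again only $d(\M(G))\ge 1$. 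The choice that works, and the one the paper makes, is $K=\langle\gamma^p\rangle=\langle\alpha_3\rangle$: then $G/K\cong\Phi_2(1^4)$ with $\M(G/K)\cong\mathbb{Z}_p^{(4)}$ and $|G'\cap K|=p$, so Theorem \ref{2}(i) gives $p^3\le|\M(G)|$, which together with Theorem \ref{J} applied to $Z=\Z(G)=\langle\gamma\rangle$ (giving $|\M(G)|\le p^3$) closes that case.

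For $\Phi_3(2111)d$ and $\Phi_3(2111)e$ your proposed lower-bound quotient $G/\Z(G)\cong\Phi_3(211)a$ is impossible on order grounds: $|\Z(G)|=p^2$, so $G/\Z(G)$ has order $p^3$. The quotient you want is $G/\langle\alpha^p\rangle\cong\Phi_3(1^4)$ (resp.\ $G/\langle\alpha_1^p\rangle$ for case $e$), whose exterior square is $\mathbb{Z}_p^{(4)}$ by Theorem \ref{SHHH}; equivalently, since $G'\cap\langle\alpha^p\rangle=1$, Theorem \ref{2}(i) with this $K$ gives $p^2\le|\M(G)|$ directly, which is what the paper does. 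Your claimed upper bounds $|\M(G)|\le p^2$ via Theorem \ref{J} or Theorem \ref{1} are neither immediate (Theorem \ref{J} with $K=\Z(G)$ only yields $|\M(G)|\le p^6$ here) nor needed: the upper bound comes from the wedge side, namely that after Hall--Witt forces $[\alpha_3,\alpha_1^\phi]=1$, the group $G\wedge G$ is abelian and generated by the four elements $[\alpha_1,\alpha^\phi],[\alpha_2,\alpha^\phi],[\alpha_2,\alpha_1^\phi],[\alpha_3,\alpha^\phi]$, each of order at most $p$, so $|G\wedge G|\le p^4$. With these corrections your argument coincides with the paper's.
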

\begin{proof}
For the group $G=\Phi_3(2111)c$, by Theorem \ref{J}, taking $K=\Z(G)$, we get $|\M(G)| \leq p^3$, and by Theorem \ref{2}(i), taking $K=\langle \gamma^p \rangle$, it follows that $p^3$ divides $|\M(G)|$; hence $|\M(G)|=p^3$.

We have the following identities:
\begin{eqnarray*}
&&[\alpha_2,\gamma^\phi]=[\alpha_1,\alpha, \gamma^\phi]=1=[\alpha_2,\alpha, \gamma^\phi]=[\alpha_3,\gamma^\phi], ~\text{by Lemma \ref{R}(viii)}.\\
&&[\alpha_3,\alpha^\phi]=[\gamma^p,\alpha^\phi]=[\gamma,\alpha^\phi]^{p}=[\gamma,(\alpha^p)^\phi]=1, \\
&&[\alpha_3,\alpha_i^\phi]=[\gamma^p,\alpha_i^\phi]=[\gamma,\alpha_i^\phi]^{p}=[\gamma,(\alpha_i^p)^\phi]=1~\text{for}~ i =1,2,\\
&& [\alpha_2,\alpha_1^\phi]^p=[\alpha_2^p,\alpha_1^\phi]=1=[\alpha_2^p,\alpha^\phi]=[\alpha_2,\alpha^\phi]^p, \\
&& [\alpha_1,\alpha^\phi]^p = [\alpha_1^p,\alpha^\phi]=1. 
\end{eqnarray*}
The group $G \wedge G$ is generated by 
$$\{[\alpha_1,\alpha^\phi],[\alpha_2,\alpha^\phi],[\alpha_2,\alpha_1^\phi],[\gamma, \alpha^\phi], [\gamma,\alpha_1^\phi]\},$$
and every generator has order at most $p$.
By Lemma \ref{R}(vi), $G \wedge G$ is abelian. Hence
$$\M(G) \cong \mathbb{Z}_p^{(3)}.$$ 
Since $|G'| = p^2$, we get $$G \wedge G \cong \mathbb{Z}_p^{(5)}.$$

Consider the group $G=\Phi_3(2111)d$.
By Theorem \ref{2}(i), taking $K=\gen{\alpha^p}$,  $p^2\leq \M(G)$. Since $|G'| = p^2$, we  have 
$$|G \wedge G|\geq p^4.$$

By Lemma \ref{R}(viii),
$$[\alpha_2,\alpha_3^\phi]=[\alpha_1,\alpha,\alpha_3^\phi]=1.$$
We have 
\begin{eqnarray*}
[\alpha^{-1},\alpha_1^{-1},\alpha_2^\phi]^{\alpha_1}&=&[\alpha_1\alpha\alpha_2^{-1}\alpha^{-1}\alpha_1^{-1},\alpha_2^{\phi}]^{\alpha_1}=[\alpha\alpha_2^{-1}\alpha^{-1},\alpha_2^{\phi}]\\
&=& [\alpha_3\alpha_2^{-1},\alpha_2^{\phi}]= [\alpha_3,\alpha_2^\phi][\alpha_2,\alpha_2^\phi]^{-1}=[\alpha_2,\alpha_2^\phi]^{-1}\\
&=& 1.
\end{eqnarray*}
By Hall-Witt identity,
\begin{eqnarray*}
1 &=& [\alpha_2,\alpha,\alpha_1^\phi]^{\alpha^{-1}}[\alpha^{-1},\alpha_1^{-1},\alpha_2^\phi]^{\alpha_1}\hspace{4cm}\\
&=&[\alpha_3,\alpha_1^\phi].
\end{eqnarray*}
We have the following identities:
\begin{eqnarray*}
&& [\alpha_3,\alpha^\phi]^p=[\alpha_3^p,\alpha^\phi]=1=[\alpha_2^p,\alpha_1^\phi]=[\alpha_2,\alpha_1^\phi]^p,\hspace{2cm}\\
&&[\alpha_2,\alpha^\phi]^p=[\alpha_2^p,\alpha^\phi]=1=[\alpha_1^p,\alpha^\phi]=[\alpha_1,\alpha^\phi]^p. 
\end{eqnarray*}
So $G \wedge G$ is generated by the set
$$\{[\alpha_1,\alpha^\phi],[\alpha_2,\alpha^\phi],[\alpha_2,\alpha_1^\phi], [\alpha_3,\alpha^{\phi}]\}.$$
By Lemma \ref{R}(vi), it follows that $G \wedge G$ is elementary abelian $p$-group, and hence 
 $|G \wedge G|\leq p^4$. Thus,
$$G \wedge G \cong \mathbb{Z}_p^{(4)}$$ and 
$$\M(G) \cong \mathbb{Z}_p \times \mathbb{Z}_p.$$

For the group $G=\Phi_3(2111)e$, by Theorem \ref{2}(i), taking $K=\langle \alpha_1^p\rangle$, we get $p^2\leq |\M(G)|$. Since $|G'| = p^2$, we  have 
$|G \wedge G|\geq p^4.$
As in the preceding case, observe that $G \wedge G$ is elementary abelian $p$-group of order $p^4$. 
Hence, 
$$\M(G) \cong \mathbb{Z}_p \times \mathbb{Z}_p$$ and 
$$G \wedge G \cong \mathbb{Z}_p^{(4)}.$$
The proof is now complete.
\hfill$\Box$

\end{proof}

\begin{lemma} 
If $G$ is one of the groups $\Phi_3(221)b_r$, $\Phi_6(221)d_0$ or $\Phi_6(221)b_{\frac{1}{2}(p-1)}$, then $\M(G)$ is isomorphic to  $ \mathbb{Z}_p \times \mathbb{Z}_p, \mathbb{Z}_p$ or $\mathbb{Z}_p$ respectively, and $G \wedge G$ is isomorphic to $\mathbb{Z}_{p^2} \times \mathbb{Z}_p \times \mathbb{Z}_p$ in all the cases.
\end{lemma}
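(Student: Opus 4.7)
The plan is to handle all three groups in parallel using the template that has been used throughout the paper. Since $|G \wedge G| = |\M(G)| \cdot |G'|$, once $\M(G)$ is pinned down the equality $|G \wedge G| = p^4$ is automatic in each case, and the task reduces to exhibiting a three-element generating set for $G \wedge G$ and determining its abelian invariants. Throughout I would work inside $\nu(G)$ modulo $\nabla(G)$, freely using Lemma \ref{R}, Lemma \ref{O} and Proposition \ref{B}.

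First I would compute $\M(G)$ for each group. For $\Phi_3(221)b_r$, Theorem \ref{2}(ii) applied with $K$ chosen inside the socle of the centre forces $d(\M(G)) \ge 2$, while Theorem \ref{J} with a cyclic central $K$ chosen so that $A = G/K$ is one of the $p^4$-groups tabulated in Theorem \ref{SHHH} yields the matching upper bound $|\M(G)| \le p^2$. For $\Phi_6(221)d_0$ and $\Phi_6(221)b_{\frac{1}{2}(p-1)}$ the expected answer is $\M(G) \cong \mathbb{Z}_p$: Theorem \ref{2}(i) with $K = \Z(G)$ provides the lower bound $p \le |\M(G)|$, and a suitable application of Theorem \ref{J} gives the matching upper bound. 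From $|\M(G)|$ and $|G'|$ one then reads off $|G \wedge G| = p^4$ in all three cases.

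Next I would cut the Proposition \ref{B} generating set down to the three commutators $[\alpha_1, \alpha^\phi]$, $[\alpha_2, \alpha^\phi]$ and $[\alpha_2, \alpha_1^\phi]$ (or their analogues involving $\gamma$ in the $\Phi_6$ cases). Every commutator whose second slot is a central generator of $G$ vanishes by Lemma \ref{R}(viii); the remaining generators involving $\alpha_3$ or $\beta_i$ are killed either by writing the offending entry as a $p$-th power and invoking Lemma \ref{O} together with the defining relations, or by a Hall-Witt identity in the spirit of the $\Phi_3(221)a$ calculation from the previous lemma. Abelianness of $G \wedge G$ then follows from Lemma \ref{S} when $G$ has class $2$, and from Lemma \ref{R}(vi) combined with the vanishing relations already deduced when $G$ has class $3$.

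The final and most delicate step is to produce the cyclic factor of order $p^2$. For this I would push $G \wedge G$ onto a carefully chosen quotient $H$ of $G$ of order $p^4$ whose exterior square is already recorded in Theorem \ref{SHHH}, for example $\Phi_2(211)c$ or $\Phi_3(211)a$, in order to certify that one distinguished generator has order exactly $p^2$ while the other two are of order $p$ and are independent of it and of each other. When a single such quotient does not suffice, I would combine several epimorphisms, or alternatively invoke the Ganea sequence of Theorem \ref{G} with a strategically chosen central $Z$, since its cokernel detects exactly the cyclic invariants of $\M(G)$ coming from the abelianization pairing. The main obstacle I anticipate is that the presentations of $\Phi_6(221)d_0$ and $\Phi_6(221)b_{\frac{1}{2}(p-1)}$ involve the parameter $\frac{1}{2}(p-1)$ in the defining $p$-th power relations; one must verify that the quotient onto which one pushes is still large enough to witness the $\mathbb{Z}_{p^2}$ factor rather than collapse it, and it is at this point that a careful hand calculation in $\nu(G)$ will be unavoidable.
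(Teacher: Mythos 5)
Your overall template (compute $|\M(G)|$ first, cut the Proposition \ref{B} generating set of $G\wedge G$ down to three elements, then detect the $\mathbb{Z}_{p^2}$ factor via quotients) matches the shape of the paper's argument, but the first step --- pinning down $|\M(G)|$ --- does not go through with the tools you propose, and that is exactly where the paper's key idea lives. For $\Phi_6(221)d_0$ and $\Phi_6(221)b_{\frac{1}{2}(p-1)}$ one has $\Z(G)=\langle\beta_1,\beta_2\rangle\le G'$ and $G/\Z(G)\cong\Phi_2(111)$, so Theorem \ref{2}(i) with $K=\Z(G)$ reads $p^2\mid |\M(G)|\cdot p^2$ and gives no lower bound at all; worse, every nontrivial normal subgroup of these groups meets $G'$ (since $\Z(G)\le G'$), so no choice of $K$ makes Theorem \ref{2}(i) produce $p\le|\M(G)|$. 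For $\Phi_3(221)b_r$ the best Theorem \ref{2}(ii) yields over all admissible $K$ is $d(\M(G))\ge 1$ (e.g.\ $K=\langle\alpha_3\rangle$ gives $G/K\cong\Phi_2(211)c$ with $d(\M(G/K))=2$ but $d(G'\cap K)=1$), and Theorem \ref{J} gives only $|\M(G)|\le p^3$. These failures are not accidents: the siblings $\Phi_6(221)d_r$ ($r\ne 0$) and $\Phi_6(221)b_r$ ($r\ne\frac{1}{2}(p-1)$) have trivial multiplier, and $\Phi_3(221)a$ has $\M\cong\mathbb{Z}_p$, so no inequality insensitive to the precise power structure can separate your three groups from theirs.

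The missing ingredient is capability. The paper first observes, from James's Table 4.1 of groups of order $p^6$, that each of the three groups occurs as $E/\Z(E)$ for some $E$ of order $p^6$ and is therefore capable, so $Z^*(G)=1$. Choosing a suitable cyclic central $Z$ ($\langle\alpha^p\rangle$ for $\Phi_3(221)b_r$, $\langle\beta_2\rangle$ in the $\Phi_6$ cases), Theorem \ref{G1} then forces the Ganea map $\lambda:G/G'\otimes Z\to\M(G)$ to have nontrivial image, while the generator of the form $g\otimes g^p$ with $\gen{g^p} = Z$ lies in $\Ker\lambda$, so $|\im\lambda|=p$ exactly; feeding this into the exact sequence of Theorem \ref{G}, whose remaining terms are known from Theorem \ref{SHHH}, gives $|\M(G)|=p^2$, resp.\ $p$, on the nose. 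You would need to incorporate this capability input (or an explicit covering-group construction) before the rest of your plan --- which for the exterior-square computation is essentially what the paper does --- can get started.
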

\begin{proof}
From \cite[Table 4.1]{RJ}, it follows that
for the groups $E$ of order $p^6$ in the isoclinism classes $\Phi_{25}$, $\Phi_{26}$, $\Phi_{42}$ or $\Phi_{43}$,  $E/Z(E)$ is isomorphic to  $\Phi_3\left(221\right)b_1, \Phi_3\left(221\right)b_{\nu}, \Phi_6(221)b_{\frac{1}{2}(p-1)}$ or $\Phi_6(221)d_0$ respectively.
Thus,  all  the groups $G$, under consideration, are capable; hence $Z^* (G)=1$.
 
For the group $G=\Phi_3\left(221\right)b_r$, by Theorem \ref{G}, taking $Z=\langle \alpha^p\rangle$, we have the following exact sequence 
$$ G/G' \otimes Z \xrightarrow{\lambda} \M(G) \rightarrow \mathbb{Z}_p \rightarrow 1.$$
By Theorem \ref{G1}, it follows that  $\alpha_1 \otimes \alpha^p \notin \Ker \lambda$, as $\alpha \otimes \alpha^p \in \Ker \lambda$.
So $|\im(\lambda)|=p$, and $|\M(G)|=p^2$. Since $|G'| = p^2$,  $|G \wedge G|=p^4$. 
By Lemma \ref{R}(viii),
$$[\alpha_2,\alpha_3^\phi]=[\alpha_1,\alpha,\alpha_3^\phi]=1.$$
The following identities hold:
\begin{eqnarray*}
&&[\alpha_3,\alpha^\phi]^p= [\alpha_3^p,\alpha^\phi]=1=[\alpha_2^p,\alpha_1^\phi]=[\alpha_2,\alpha_1^\phi]^p, \hspace{2cm}
 \\
&&[\alpha_2,\alpha^\phi]^p=[\alpha_2^p,\alpha^\phi]=1,\\
&& [\alpha_3,\alpha_1^\phi]=[\alpha_1^{pr^{-1}},\alpha_1^\phi]=[\alpha_1,\alpha_1^\phi]^{pr^{-1}}=1, \\
&&[\alpha_3,\alpha^\phi]=[\alpha_1^{pr^{-1}},\alpha^\phi]=[\alpha_1,\alpha^\phi]^{pr^{-1}},\\
&&[\alpha_1,\alpha^\phi]^{p^2}=[\alpha_1^{p^2},\alpha^\phi]=1.
\end{eqnarray*}
Observe that $G \wedge G$ is generated by the set  $\{[\alpha_1,\alpha^{\phi}], [\alpha_2,\alpha^{\phi}], [\alpha_1,\alpha_2^{\phi}]\}$.
By Lemma \ref{R}(vi), $G \wedge G$ is abelian, and therefore
$$G \wedge G\cong \langle[\alpha_1,\alpha^\phi] \rangle \times \langle[\alpha_2,\alpha^\phi] \rangle\times \langle[\alpha_2,\alpha_1^\phi] \rangle \cong \mathbb{Z}_{p^2} \times \mathbb{Z}_p \times \mathbb{Z}_p.$$
Hence $$\M(G) \cong \langle [\alpha_1,\alpha^{\phi}]^p \rangle \times \langle [\alpha_2,\alpha_1^{\phi}] \rangle \cong \mathbb{Z}_p \times \mathbb{Z}_p.$$

For the group $G \cong \Phi_6(221)d_0$, taking $Z= \gen{\beta_2}$, notice that $G/Z \cong \Phi_3(211)b_\nu$. By  Theorem \ref{SHHH}, $\M(G/Z) \cong \mathbb{Z}_p$. Then by Theorem \ref{G}, we have the exact sequence 

\centerline{$ G/G' \otimes Z \xrightarrow{\lambda} M(G) \xrightarrow{\mu}\mathbb{Z}_p \rightarrow \mathbb{Z}_p \rightarrow 1$.}

By Theorem \ref{G1}, it follows that $\alpha_2 G' \otimes \alpha_1^p \notin \Ker \lambda$, as $\alpha_1 G' \otimes \alpha_1^p \in \Ker \lambda$. Hence
$|\im(\lambda)| = p$, and therefore 
$$\M(G) \cong \mathbb{Z}_p.$$ 
Since $|G'| = p^3$, we have 
$$|G \wedge G| = p^4.$$
By Lemma \ref{R}(viii), $[\beta,\beta_1^\phi]=[\alpha_1,\alpha_2,\beta_1^\phi]=1$. Similarly,   $[\beta,\beta_2^\phi]=[\beta_2,\beta_1^\phi]=1$.

We have the following identities:
\begin{eqnarray*}
&&[x,y^\phi]^p=[x^p,y^\phi]=1 ~for~ x \in \{\beta,\beta_1,\beta_2\}, y \in \{\alpha_1,\alpha_2\}, \\
&&[\beta_1,\alpha_1^\phi]=[\alpha_2^{p\nu^{-1}},\alpha_1^\phi]=[\alpha_2,\alpha_1^\phi]^{p\nu^{-1}},\\
&& [\beta_1,\alpha_2^\phi]=[\alpha_2^{p\nu^{-1}},\alpha_2^\phi]=[\alpha_2,\alpha_2^\phi]^{p\nu^{-1}}=1,\\
&& [\beta_2,\alpha_1^\phi]=[\alpha_1^p,\alpha_1^\phi]=[\alpha_1,\alpha_1^\phi]^p=1, \\
&&[\beta_2,\alpha_2^\phi]=[\alpha_1^p,\alpha_2^\phi]=[\alpha_1,\alpha_2^\phi]^p,\\
&&[\alpha_1,\alpha_2^\phi]^{p^2}=[\beta_2,\alpha_2^\phi]^p=1.
\end{eqnarray*}
Hence $G \wedge G$ is generated by $\{[\alpha_1,\alpha_2^\phi], [\beta,\alpha_1^\phi], [\beta,\alpha_2^\phi]\}$ and 
$$G \wedge G \cong \mathbb{Z}_{p^2} \times \mathbb{Z}_p \times \mathbb{Z}_p.$$

Finally we consider the group $G = \Phi_6(221)b_{\frac{1}{2}(p-1)}$. As in the preceding case, taking $Z = \gen{\beta_2}$, it follows that 
$$\M(G) \cong \mathbb{Z}_p.$$
Recall that $\zeta$ is the smallest positive integer which is a primitive root (mod $p$).
In this case, $\zeta^{\frac{1}{2}(p-1)} \equiv -1$ (mod $p$).
By Lemma \ref{R}(viii), $[\beta,\beta_1^\phi]=[\alpha_1,\alpha_2,\beta_1^\phi]=1$. Similarly $[\beta,\beta_2^\phi]=[\beta_2,\beta_1^\phi]=1$.

We have the following identities:
\begin{eqnarray*}
&&[x,y^\phi]^p= [x^p,y^\phi]=1 ~\text{for}~ x \in \{\beta,\beta_1,\beta_2\}, y \in \{\alpha_1,\alpha_2\},\\
&&[\beta_1,\alpha_1^\phi]=[\alpha_1^{-p},\alpha_1^\phi]=[\alpha_1^{-1},\alpha_1^\phi]^{p}=1,\\
&& [\beta_1,\alpha_2^\phi]=[\alpha_1^{-p},\alpha_2^\phi]=[\alpha_1^{-1},\alpha_2^\phi]^p=[\alpha_1,\alpha_2^\phi]^{-p} ,\\
&& [\beta_2,\alpha_1^\phi]=[\alpha_2^p,\alpha_1^\phi]=[\alpha_2,\alpha_1^\phi]^p,\\
&& [\beta_2,\alpha_2^\phi]=[\alpha_2^p,\alpha_2^\phi]=[\alpha_2,\alpha_2^\phi]^p=1\\
&&[\alpha_2,\alpha_1^\phi]^{p^2}=[\beta_2,\alpha_1^\phi]^p=1.
\end{eqnarray*}

Hence, $G \wedge G$ is generated by the set $\{[\alpha_1,\alpha_2^\phi], [\beta,\alpha_1^\phi], [\beta,\alpha_2^\phi]\}$, and therefore
$$G \wedge G \cong \mathbb{Z}_{p^2} \times \mathbb{Z}_p \times \mathbb{Z}_p,$$
which completes the proof.
\hfill $\Box$

\end{proof}

\begin{lemma}
If $G$ is one of the groups $\Phi_6(221)a, \Phi_6(221)b_r  \;\big(r \ne \frac{1}{2}(p-1) \big)$, $\Phi_6(221)c_r$ or  $\Phi_6(221)d_r$, then $\M(G)$ is trivial and $G \wedge G \cong \mathbb{Z}_p^{(3)}$.
\end{lemma}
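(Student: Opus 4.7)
For every group $G$ in the statement, $G$ lies in the isoclinism class $\Phi_6$, so $G' = \gen{\beta,\beta_1,\beta_2}$ is elementary abelian of order $p^3$ and is contained in $\Z(G)$, where $\beta = [\alpha_1,\alpha_2]$, $\beta_1 = [\beta,\alpha_1]$, $\beta_2 = [\beta,\alpha_2]$. Since $G\wedge G$ surjects onto $G'$, we will have $|G\wedge G|\ge p^3$ and $\M(G)=1$ as soon as we can show $|G\wedge G|\le p^3$. So the whole lemma reduces to producing three order-$p$ generators of $G \wedge G$; this parallels the preceding lemma, with the crucial difference being that the power relations in the present cases must force $[\alpha_1,\alpha_2^\phi]$ to have order $p$ (instead of $p^2$, which is what happened for $\Phi_6(221)b_{\frac{1}{2}(p-1)}$ and $\Phi_6(221)d_0$).

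My plan is first to apply Proposition \ref{B} to list the candidate generators of $G \wedge G$, namely $[\alpha_2,\alpha_1^\phi]$ together with $[x,\alpha_i^\phi]$ and the various $[y,x^\phi]$ for $x,y \in \{\beta,\beta_1,\beta_2\}$, $i \in \{1,2\}$. Then Lemma \ref{R}(viii) kills $[\beta,\beta_i^\phi]$ and $[\beta_i,\beta_j^\phi]$ since $\beta,\beta_1,\beta_2 \in \Z(G)$; and Lemma \ref{R}(viii) also gives $[\beta,\alpha_1^\phi]=[\beta,\alpha_2^\phi]=[\beta,\cdot^\phi]$ style rewrites only where applicable. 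The bulk of the computation is then to use Lemma \ref{O} on the specific power relations of each presentation (they prescribe $\alpha_1^p$ and $\alpha_2^p$ in terms of $\beta_1,\beta_2$) to rewrite each $[\beta_i,\alpha_j^\phi]$ either as $1$ or as a $p$-th power of $[\beta,\alpha_j^\phi]$ or of $[\alpha_1,\alpha_2^\phi]$; and then to verify that these $p$-th powers themselves vanish, using identities of the form $[\alpha_i,\alpha_j^\phi]^p = [\alpha_i^p,\alpha_j^\phi]$.

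After these reductions I expect $G\wedge G$ to be generated by $\{[\alpha_1,\alpha_2^\phi],\,[\beta,\alpha_1^\phi],\,[\beta,\alpha_2^\phi]\}$, each of order $p$, which together with Lemma \ref{R}(vi) (so abelianness holds automatically since class $2$ arguments do not apply here but class $3$ gives the needed commutativity for these generators) shows $|G\wedge G|\le p^3$. Combined with $G\wedge G \onto G'$ and $|G'|=p^3$ we will get $G \wedge G \cong \mathbb{Z}_p^{(3)}$ and $\M(G)=1$.

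The main obstacle is the case analysis for the order of $[\alpha_1,\alpha_2^\phi]$: in each of $\Phi_6(221)a$, $\Phi_6(221)b_r$ ($r\ne\frac{1}{2}(p-1)$), $\Phi_6(221)c_r$, $\Phi_6(221)d_r$, one has to unpack the defining $p$-th power relations on $\alpha_1,\alpha_2$ and check that the product $[\beta_2,\alpha_1^\phi]\cdot[\beta_1,\alpha_2^\phi]^{\pm 1}$ (or whichever identity is induced by $[\alpha_1,\alpha_2^\phi]^{p^2} = [\alpha_1^{p^2},\alpha_2^\phi]$) collapses, unlike the previous lemma where the relation $\beta_2=\alpha_1^p$ (respectively $\beta_1=\alpha_1^{-p}$) prevented the collapse. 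Once this verification is done uniformly, the presentation of $G\wedge G$ is immediate and the conclusion follows.
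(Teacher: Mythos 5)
Your route is genuinely different from the paper's: the paper disposes of this lemma in one line, citing Steeg's classification of finite $p$-groups with trivial Schur multiplier (\cite[Theorem 2.2]{MS}), after which $G \wedge G \cong G' \cong \mathbb{Z}_p^{(3)}$ is immediate. You instead propose a direct computation in $\nu(G)$ in the style of the preceding lemma. That is a legitimate, self-contained alternative, but as written it has a gap at exactly the step you yourself flag as ``the main obstacle,'' and that step is where all the content of the lemma lives.

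The identities you actually invoke (Proposition \ref{B}, Lemma \ref{R}(viii), and the power expansions of Lemma \ref{O}) only prove $|G \wedge G| \le p^4$, with $[\alpha_1,\alpha_2^\phi]$ of order dividing $p^2$. For instance, for $\Phi_6(221)a$ one gets $[\alpha_1,\alpha_2^\phi]^p=[\alpha_1^p,\alpha_2^\phi]=[\beta_1,\alpha_2^\phi]$ and also $[\alpha_1,\alpha_2^\phi]^p=[\alpha_1,(\alpha_2^p)^\phi]=[\beta_2,\alpha_1^\phi]^{-1}$; these two facts alone are consistent with $[\alpha_1,\alpha_2^\phi]$ having order $p^2$, and for the excluded groups $\Phi_6(221)d_0$ and $\Phi_6(221)b_{\frac{1}{2}(p-1)}$ the analogous element genuinely does have order $p^2$, so no purely formal manipulation can succeed without using the specific arithmetic of the power relations. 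The missing ingredient is the Hall--Witt identity, which (exactly as in the paper's computation for $\Phi_6(2111)a$) yields $[\beta_2,\alpha_1^\phi]=[\beta_1,\alpha_2^\phi]$. Combining this with the two expansions above gives a relation of the form $[\beta_1,\alpha_2^\phi]^{k+1}=1$, where $k$ is the unit occurring in the power relation $\alpha_1^p=\beta_1^k$ (or its analogue); since $[\beta_1,\alpha_2^\phi]^p=[\beta_1^p,\alpha_2^\phi]=1$, the element dies precisely when $k\not\equiv -1 \pmod p$, which is exactly where the hypothesis $r\ne\frac{1}{2}(p-1)$ enters. For $\Phi_6(221)c_r$ one must additionally dispose of $[\beta_1,\alpha_1^\phi]$ and $[\beta_2,\alpha_2^\phi]$, using $[\alpha_i^p,\alpha_i^\phi]=1$ together with the same Hall--Witt relation. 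Until you supply this identity and carry out the arithmetic in each of the four presentations, the claim $[\alpha_1,\alpha_2^\phi]^p=1$ is unproved and your argument only establishes $p^3 \le |G\wedge G| \le p^4$.
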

\begin{proof}
Follows from \cite[Theorem 2.2]{MS}.
\hfill $\Box$

\end{proof}

\begin{lemma}
If $G$ is one of the groups $\Phi_6(2111)a, \Phi_6(2111)b_r$ or $\Phi_6(1^5)$, then $\M(G)$ is isomorphic to  $\mathbb{Z}_p, \mathbb{Z}_p,$ or $\mathbb{Z}_p^{(3)}$ respectively, and $G \wedge G \cong \mathbb{Z}_p^{(4)}, \mathbb{Z}_p^{(4)}$ or $ \mathbb{Z}_p^{(6)}$ respectively.
\end{lemma}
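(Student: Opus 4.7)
The plan is to follow the same two-stage strategy used throughout the section: first bound $|\M(G)|$ above and below using Theorems \ref{2}, \ref{J}, \ref{G} (and, in the most rigid case, an explicit covering group), and then work inside $\nu(G)$ to exhibit a minimal generating set for $G \wedge G$ of the required size, using Proposition \ref{B}, Lemma \ref{R}, and Lemma \ref{O}. Since $|G'| = p^3$ in all three cases, the identity $|G \wedge G| = |\M(G)| \cdot |G'|$ will then fix the order, and the commutator relations will pin down the isomorphism type.

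For $G = \Phi_6(2111)a$ and $G = \Phi_6(2111)b_r$, I would first locate a central subgroup $Z$ of order $p$ inside $G'$ such that $G/Z$ is one of the groups already handled (likely a group in $\Phi_3$ or $\Phi_6(221)$ whose Schur multiplier has been computed). Applying Theorem \ref{G} to this central extension and using Theorem \ref{G1} to show that the appropriate generator of $G^{ab} \otimes Z$ does \emph{not} lie in $\ker \lambda$ will give $|\M(G)| \le p$. The matching lower bound $p \le |\M(G)|$ comes from Theorem \ref{2}(i) with $K = Z(G)$ or $K$ a suitable cyclic central subgroup. With $|G \wedge G| = p^4$ established, I would enumerate the six candidate generators $[\alpha_i,\alpha_j^\phi]$ from Proposition \ref{B}, kill those involving the central generator by Lemma \ref{R}(viii), apply a Hall-Witt identity on $(\alpha, \alpha_1, \alpha_2)$ to get a relation among the remaining commutators, and use Lemma \ref{O} together with the relations $\alpha_i^p \in Z(G)$ to show that every surviving generator has order exactly $p$. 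Lemma \ref{R}(vi) then gives commutativity, so $G \wedge G \cong \mathbb{Z}_p^{(4)}$ and consequently $\M(G) \cong \mathbb{Z}_p$.

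For $G = \Phi_6(1^5)$, the rigid bound $|\M(G)| = p^3$ requires the covering-group technique used for $\Phi_9(1^5)$ and $\Phi_{10}(1^5)$. I would take the free group $F$ on the minimal generators of $G$, work modulo $\gamma_6(F)$ and $F^p$, and construct an explicit group $H$ of order $p^8$ with a central subgroup $K \cong \mathbb{Z}_p^{(3)}$ such that $H/K \cong G$ and $K \subseteq H'$. Verifying $|H| = p^8$ requires deriving the analog of the identities labelled \eqref{x}--\eqref{y} from Hall-Witt in $F/\gamma_6(F)$ tailored to the presentation of $\Phi_6(1^5)$, and showing that successive quotients $H \to H_2 \to H_1$ hit known groups of orders $p^7$ and $p^6$ from James's list. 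Once $H$ is in hand, the transgression $\Hom(K,\mathbb{C}^\star) \to \M(G)$ injects $\mathbb{Z}_p^{(3)}$ into $\M(G)$, while Theorem \ref{J} (with $K = Z(G)$ or a maximal central subgroup) gives $|\M(G)| \le p^3$, forcing equality. Given $|G \wedge G| = p^6$, the structure is computed as in the preceding lemmas: Proposition \ref{B} supplies at most eight generators, Lemma \ref{R}(viii) kills those involving central elements, two Hall-Witt identities collapse the list to six, and Lemma \ref{O} together with $\alpha_i^p = 1$ (or central) shows each has order $p$, so $G \wedge G \cong \mathbb{Z}_p^{(6)}$.

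The main obstacle is the $\Phi_6(1^5)$ case, specifically the construction of the order-$p^8$ cover $H$. The challenge is not the Ganea-type arguments but the bookkeeping: one must set up the free nilpotent quotient with exactly the right extra central generators $\beta_1,\beta_2,\beta_3$, derive the correct Hall-Witt consequences modulo $\gamma_6(F)$, and then verify that the resulting presentation does not collapse below order $p^8$. The $\Phi_9(1^5)$ and $\Phi_{10}(1^5)$ proofs provide templates, but the specific commutator relations in $\Phi_6$ (which is not of maximal class) will force a genuinely different set of auxiliary generators and a different reduction sequence $H \to H_2 \to H_1$ through the James list of order-$p^6$ and $p^7$ groups.
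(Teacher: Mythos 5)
Your core computation---reducing the generating set of $G\wedge G$ in $\nu(G)$ via Proposition \ref{B}, Lemma \ref{R}(viii), Hall--Witt, and Lemma \ref{O}---is exactly what the paper does, and it is this reduction (not a separate multiplier bound) that supplies the upper bound $|G\wedge G|\le p^4$ for $\Phi_6(2111)a$ and $\Phi_6(2111)b_r$; the paper pairs it only with the lower bound $p\le|\M(G)|$ from Theorem \ref{2}(i) applied with $K=\langle\beta_1\rangle$ (note $K=\Z(G)$ gives nothing here, since $G'\cap\Z(G)$ has order $p^2$). Your proposed Ganea step for these two groups is stated backwards: in the exact sequence of Theorem \ref{G} one has $|\M(G)|=|\im\lambda|\cdot|\M(G/Z)|/|G'\cap Z|$, so exhibiting an element of $G^{ab}\otimes Z$ \emph{outside} $\ker\lambda$ would force $|\M(G)|\ge p^2$, contradicting the answer $\M(G)\cong\mathbb{Z}_p$; to make that route work you would instead need $Z\subseteq Z^*(G)$ (these groups are not capable, with epicenter $\langle\beta_1\rangle$), which is information the paper only extracts \emph{after} computing $\M(G)$. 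Fortunately this step is redundant in your plan. For $\Phi_6(1^5)$ you genuinely diverge: the paper does \emph{not} build a cover, but instead proves independence of the six surviving generators of $G\wedge G$ by mapping onto $[\Phi_3(1^4),\Phi_3(1^4)^\phi]$ three times (modulo $\langle\beta_1\rangle$, $\langle\beta_2\rangle$, and $\langle\beta_1\beta_2\rangle$), which gives $|G\wedge G|\ge p^6$ and hence $|\M(G)|\ge p^3$; the upper bound comes from Theorem \ref{J} with $K=\langle\beta_1\rangle$. Your covering-group route is nevertheless viable and arguably cleaner: $\Phi_6(1^5)$ is the relatively free group of exponent $p$ and class $3$ on two generators, so the natural cover is $F/\gamma_5(F)F^p$ (you should work modulo $\gamma_5(F)$, not $\gamma_6(F)$ as in the class-$4$ families $\Phi_9,\Phi_{10}$), of order $p^8$ with $\gamma_4\cong\mathbb{Z}_p^{(3)}$ central and contained in the derived subgroup, and the transgression then injects $\mathbb{Z}_p^{(3)}$ into $\M(G)$. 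Also, Proposition \ref{B} initially gives ten generators here (five polycyclic generators), not eight, though your reduction still lands on the correct six.
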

\begin{proof}
For the group $G \cong \Phi_6(2111)a$, by Theorem \ref{2}(i), taking $K=\langle\beta_1 \rangle$, we have $p \leq |\M(G)|$, and  so, since $|G'| = p^3$,  it follows that $p^4 \leq |G \wedge G|$.
By Lemma \ref{R}(viii), 
$$[\beta,\beta_1^\phi] =[\beta,\beta_2^\phi]=[\beta_2,\beta_1^\phi]=1.$$

Also we have
\begin{eqnarray*}
&&[x,y^\phi]^p= [x^p,y^\phi]=1 ~\text{for}~ x \in \{\beta,\beta_1,\beta_2\}, y \in \{\alpha_1,\alpha_2\},\\
&&[\beta_1,\alpha_1^\phi]=[\alpha_1^p,\alpha_1^\phi]=[\alpha_1,\alpha_1^\phi]^p=1,\\ 
&&[\beta_1,\alpha_2^\phi]=[\alpha_1^p,\alpha_2^\phi]=[\alpha_1,\alpha_2^\phi]^p=[\alpha_1,{(\alpha_2^p)}^\phi]=1.
\end{eqnarray*}
Now we have 
\begin{eqnarray*}
[\alpha_2^{-1},\alpha_1^{-1},\beta^\phi]^{\alpha_1} &=& [\alpha_2\alpha_1\beta^{-1}\alpha_1^{-1}\alpha_2^{-1},\beta^\phi]^{\alpha_1}=[\alpha_2\alpha_1\beta_1\alpha_1^{-1}\beta^{-1}\alpha_2^{-1},\beta^\phi]^{\alpha_1}\\
&=&[\alpha_2\beta_1\beta^{-1}\alpha_2^{-1},\beta^\phi]^{\alpha_1}=[\alpha_2\beta_1\beta_2\alpha_2^{-1}\beta^{-1},\beta^\phi]^{\alpha_1}\\
&=& [\beta_1\beta_2\beta^{-1},\beta^\phi]^{\alpha_1}=[\beta,\beta^\phi]^{-1}=1.
\end{eqnarray*}
By Hall-Witt identity, we get
\begin{eqnarray*}
1 &=& [\beta,\alpha_2,\alpha_1^\phi]^{\alpha_2^{-1}}[\alpha_2^{-1},\alpha_1^{-1},\beta^\phi]^{\alpha_1}[\alpha_1,\beta^{-1},\alpha_2^{-\phi}]^{\beta}\hspace{2cm} \\
&=&[\beta_2,\alpha_1^\phi]^{\alpha_2^{-1}}[\beta\beta_1\beta^{-1},\alpha_2^{-\phi}]^{\beta}\\
&=& [\beta_2,\alpha_1^\phi][\beta_1,\alpha_2^{\phi}]^{-1}.
\end{eqnarray*}
Hence, $[\beta_2,\alpha_1^\phi]=[\beta_1,\alpha_2^\phi]=1$. 
Consequently,  $G \wedge G$ is generated by the set
$$\{[\alpha_1,\alpha_2^{\phi}],[\beta,\alpha_1^{\phi}], [\beta,\alpha_2^{\phi}], [\beta_2,\alpha_2^{\phi}]\},$$
and is  elementary abelian by  Lemma \ref{R}(vi).
Hence
$$G \wedge G \cong \mathbb{Z}_p^{(4)},$$
and therefore 
$$\M(G) \cong \mathbb{Z}_p.$$

For the group $G \cong \Phi_6(2111)b_r$,  taking  $K=\langle \beta_1\rangle$, the proof goes on the lines of the preceding case.

Now we consider the group $G=\Phi_6(1^5)$.
By Theorem \ref{J}, taking $K=\langle \beta_1 \rangle$, we have $|\M(G)| \leq p^3$. Since $|G'| = p^3$,  we get $|G \wedge G| \leq p^6$.
As described above,
$$[\beta,\beta_1^\phi]=[\beta,\beta_2^\phi]=[\beta_2,\beta_1^\phi]=1$$
 and, by Hall-Witt identity, we get
$$[\beta_2,\alpha_1^\phi]=[\beta_1,\alpha_2^\phi].$$
Thus $G \wedge G$ is generated by the set
\[ \{ [\alpha_1,\alpha_2^{\phi}],[\beta,\alpha_1^{\phi}], [\beta,\alpha_2^{\phi}], [\beta_1,\alpha_1^\phi],   [\beta_1,\alpha_2^\phi], [\beta_2,\alpha_2^{\phi}]\}. \] 
It follows from Lemma \ref{R}(vi) that $G \wedge G$ is  abelian. A straightforward calculation (as above) shows that each generator of $G \wedge G$ is of order at most $p$.

Consider the natural epimorphism $$[G, G^\phi] \rightarrow [G/\langle \beta_1 \rangle, (G/\langle \beta_1 \rangle)^\phi] \cong [\Phi_3(1^4), \Phi_3(1^4)^\phi],$$ 
which shows that the elements $[\alpha_1,\alpha_2^{\phi}],[\beta,\alpha_1^{\phi}], [\beta,\alpha_2^\phi], [\beta_2,\alpha_2^{\phi}]$ are non-trivial and independent in $G \wedge G$. 
Now, consider the natural epimorphism 
$$[G, G^\phi] \rightarrow [G/\langle \beta_2 \rangle, (G/\langle \beta_2 \rangle)^\phi] \cong [\Phi_3(1^4), \Phi_3(1^4)^\phi],$$ 
which shows that the elements $[\alpha_1,\alpha_2^{\phi}],[\beta,\alpha_1^{\phi}], [\beta,\alpha_2^\phi], [\beta_1,\alpha_1^{\phi}]$ are non-trivial and independent in $G \wedge G$.
We take the quotient group
\begin{eqnarray*}
G/\gen{\beta_1\beta_2} &=&\langle \alpha_1,\alpha_2,\beta,\beta_1 \mid [\alpha_1,\alpha_2]=\beta,[\beta,\alpha_1]=[\alpha_2,\beta]=\beta_1, \alpha_1^p=\alpha_2^p=\beta^p=\beta_1^p=1\rangle\\
& \cong & \langle \alpha_1\alpha_2, \alpha_1, \beta,\beta_1 \mid [\alpha_1,\alpha_1\alpha_2]=\beta, [\beta,\alpha_1]=\beta_1,
[\beta,\alpha_1\alpha_2]=1, \alpha_1^p=(\alpha_1\alpha_2)^p=\\
&&\hspace{9.5cm}\beta^p=\beta_1^p=1\rangle\\
& \cong & \Phi_3(1^4).
\end{eqnarray*}
In $[G/\langle \beta_1\beta_2 \rangle, (G/\langle \beta_1\beta_2 \rangle)^\phi]$, by Hall-Witt identity we have
\begin{eqnarray*}
1 &=& [\beta,\alpha_1,\alpha_2^\phi]^{\alpha_1^{-1}}[\alpha_1^{-1},\alpha_2^{-1},\beta^\phi]^{\alpha_2}[\alpha_2,\beta^{-1},\alpha_1^{-\phi}]^{\beta}\hspace{2cm}\\
&=&[\beta_1,\alpha_2^\phi]^{\alpha_1^{-1}}[\alpha_1\alpha_2\beta\alpha_2^{-1}\alpha_1^{-1},\beta^\phi]^{\alpha_2}[\beta\beta_1^{-1}\beta^{-1},\alpha_1^{-\phi}]^{\beta}\\
&=& [\beta_1,\alpha_2^\phi][\alpha_1\beta\beta_1\alpha_1^{-1},\beta^\phi]^{\alpha_2}[\beta_1^{-1},\alpha_1^{-\phi}]^{\beta}\\
&=&[\beta_1,\alpha_2^\phi][\alpha_1\beta\alpha_1^{-1}\beta_1,\beta^\phi]^{\alpha_2}[\beta_1,\alpha_1^{\phi}]\\
&=& [\beta_1,\alpha_2^\phi][\beta\alpha_1\beta_1^{-1}\alpha_1^{-1}\beta_1,\beta^\phi]^{\alpha_2}[\beta_1,\alpha_1^{\phi}]\\
&=& [\beta_1,\alpha_2^\phi][\beta,\beta^\phi][\beta_1,\alpha_1^{\phi}]\\
&=& [\beta_1,\alpha_2^\phi][\beta_1,\alpha_1^{\phi}].
\end{eqnarray*}
It follows from Theorem \ref{SHHH} that 
$$G/\langle \beta_1\beta_2 \rangle \wedge G/\langle \beta_1\beta_2\rangle \cong \langle[\alpha_1\alpha_2,\alpha_1^\phi], [\beta^{-1}, \alpha_1^\phi],[\beta^{-1},(\alpha_1\alpha_2)^\phi], [\beta_1^{-1},\alpha_1^\phi]\rangle.$$ 
A straightforward computation now shows that
 $$G/\langle \beta_1\beta_2 \rangle \wedge G/\langle \beta_1\beta_2\rangle \cong \langle[\alpha_1,\alpha_2^\phi], [\beta,\alpha_1^\phi],[\beta,\alpha_2^\phi], [\beta_1,\alpha_2^\phi]\rangle \cong \mathbb{Z}_p^{(4)}.$$

Now consider the natural epimorphism
$$[G, G^\phi] \rightarrow [G/\langle \beta_1\beta_2 \rangle, (G/\langle \beta_1\beta_2 \rangle)^\phi] \cong [\Phi_3(1^4), \Phi_3(1^4)^\phi],$$ 
which shows that the generators  $ [\alpha_1,\alpha_2^{\phi}],[\beta,\alpha_1^{\phi}], [\beta,\alpha_2^{\phi}],  [\beta_1,\alpha_2^\phi]$ of $G \wedge G$ are independent.
Thus, it follows that all the six generators  of $G\wedge G$ are non-trivial and independent, and so, we have 
$|G \wedge G| \geq p^6$. 
Hence,
 $$G \wedge G \cong \mathbb{Z}_p^{(6)}$$ 
and 
$$\M(G) \cong \mathbb{Z}_p^{(3)}.$$
\hfill $\Box$

\end{proof}

\begin{lemma}
If $G$ is one of the groups $\Phi_7(2111)a, \Phi_7(2111)b_r, \Phi_7(2111)c$ or $\Phi_7(1^5)$, then $\M(G)$ is isomorphic to $\mathbb{Z}_p^{(3)},  \mathbb{Z}_p^{(3)}, \mathbb{Z}_p^{(3)}$ or $\mathbb{Z}_p^{(4)}$ respectively, and $G \wedge G$ is isomorphic to $\mathbb{Z}_p^{(5)},  \mathbb{Z}_p^{(5)}, \mathbb{Z}_p^{(5)}$ or $\mathbb{Z}_p^{(6)}$ respectively.
\end{lemma}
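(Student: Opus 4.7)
The plan is to follow the template established in the preceding lemmas, particularly the one treating the $\Phi_6$ class. For each group $G$ in $\Phi_7$, I would first pin down $|\M(G)|$ from above and below, and then compute $G\wedge G$ explicitly inside $\nu(G)$.

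For the lower bound on $|\M(G)|$ (equivalently, $|G\wedge G|$), I would apply Theorem \ref{2}(i) to a well-chosen central subgroup $K$ (typically $K=\gen{\beta_1}$ or a similar term of the lower central series), so that $|\M(G/K)|$ together with $|G'\cap K|$ forces the claimed lower bound. For the upper bound in the $\Phi_7(1^5)$ case, I would apply Theorem \ref{J} with the same $K$; in the other three cases, once $G\wedge G$ is shown to be generated by a set of the expected size, the size of $G'$ will pin down $|\M(G)|$ from above.

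To produce the generators of $G\wedge G$, I would start from Proposition \ref{B} applied to the polycyclic generating sequence $\alpha_1,\alpha_2,\beta,\beta_1,\beta_2$ coming from the presentation of $G$ in \cite{RJ}. Since $\beta, \beta_1, \beta_2$ all lie in $\gamma_2(G)$ (and some in the center), Lemma \ref{R}(viii) will immediately kill every bracket of the form $[\beta_i,\beta_j^\phi]$ and $[\beta,\beta_i^\phi]$. A Hall--Witt computation identical in shape to the one in the $\Phi_6(2111)a$ case should collapse $[\beta_2,\alpha_1^\phi]$ and $[\beta_1,\alpha_2^\phi]$ (or equate them), cutting the generating set down to the target rank. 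Power identities from Lemma \ref{O}, combined with the defining relations $\beta_1^p=\beta_2^p=\alpha_i^p=1$ of $\Phi_7$, will show each surviving generator has order $p$, and Lemma \ref{R}(vi) will show $G\wedge G$ is abelian (in fact elementary abelian). This yields the bound $|G\wedge G|\le p^{5}$ (or $p^6$ for $\Phi_7(1^5)$), matching the lower bound.

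The main obstacle will be the group $\Phi_7(1^5)$, where I need to show six distinct generators are genuinely independent (rather than just that they generate). Following the template for $\Phi_6(1^5)$, I would exploit natural epimorphisms $[G,G^\phi]\onto [G/N,(G/N)^\phi]$ for various central $N$ of order $p$ (such as $\gen{\beta_1}$, $\gen{\beta_2}$, and a suitably chosen diagonal $\gen{\beta_1\beta_2^k}$), each quotient being isomorphic to a group of order $p^4$ whose exterior square is already known from Theorem \ref{SHHH} (specifically $\Phi_3(1^4)$, whose exterior square is $\mathbb{Z}_p^{(4)}$). Each such projection will witness the independence of a different $4$-element subset of the six candidate generators, and combining three well-chosen projections will exhibit all six as independent, giving $G\wedge G\cong\mathbb{Z}_p^{(6)}$ and hence $\M(G)\cong\mathbb{Z}_p^{(4)}$. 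The hard part is choosing the right third quotient: the calculation in the $\Phi_6(1^5)$ case required a delicate Hall--Witt computation inside the $\Phi_3(1^4)$ quotient to re-express the image of $[\beta_1,\alpha_2^\phi]$ in the standard generating set, and the analogous identity for $\Phi_7(1^5)$ will need to be verified carefully.
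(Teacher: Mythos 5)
There is a genuine gap: your plan is modelled on the $\Phi_6$ family and does not match the actual structure of the $\Phi_7$ groups. The groups in $\Phi_7$ are \emph{not} presented on $\alpha_1,\alpha_2,\beta,\beta_1,\beta_2$ with $[\alpha_1,\alpha_2]=\beta$, $[\beta,\alpha_i]=\beta_i$; they are the three-generator groups $\langle \alpha,\alpha_1,\alpha_2,\alpha_3,\beta\rangle$ with $[\alpha_i,\alpha]=\alpha_{i+1}$ and $[\alpha_1,\beta]=\alpha_3$, so that $G'=\langle\alpha_2,\alpha_3\rangle$ has order $p^2$ (not $p^3$), $G^{ab}\cong\mathbb{Z}_p^{(3)}$, and $\Z(\Phi_7(1^5))=\langle\alpha_3\rangle$ is \emph{cyclic} of order $p$. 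Consequently the relations you plan to exploit (killing $[\beta_i,\beta_j^\phi]$, a Hall--Witt identity relating $[\beta_2,\alpha_1^\phi]$ and $[\beta_1,\alpha_2^\phi]$) have no analogue here, and, more seriously, your independence argument for $\Phi_7(1^5)$ collapses: there is only one central subgroup of order $p$, its quotient is $\Phi_2(1^4)$ (not $\Phi_3(1^4)$), and a single projection onto $\Phi_2(1^4)\wedge\Phi_2(1^4)\cong\mathbb{Z}_p^{(5)}$ can witness at most five independent elements, never six. Your proposed upper bound for that case is also not available: Theorem \ref{J} with $K=\Z(G)=\langle\alpha_3\rangle$ only yields $|\M(G)|\le p^7$.

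The paper's actual route is different on both ends. For the upper bound it observes that $N=\langle\alpha,\alpha_1,\alpha_2,\alpha_3\rangle$ is a normal subgroup of index $p$ isomorphic to $\Phi_3(211)a$ (resp.\ $\Phi_3(211)b_r$, and $\langle\alpha_1,\alpha\beta,\alpha_2,\alpha_3\rangle\cong\Phi_3(211)a$ for $\Phi_7(2111)c$), so Theorem \ref{J'} gives $|\M(G)|\le|\M(N)||N/N'|=p\cdot p^2=p^3$; the lower bound $d(\M(G))\ge 3$ comes from Theorem \ref{2}(ii) with $K=\Z(G)$. The exterior square is then pinned down by order, not by cutting the generating set to five elements. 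For $\Phi_7(1^5)$ the paper does not compute $\M(G)$ at all but quotes the Main Theorem of \cite{SX}. If you want a self-contained argument you would need to replace your multi-quotient scheme by something that actually detects a $p^4$ lower bound for $|\M(\Phi_7(1^5))|$ (e.g.\ a free-presentation/transgression argument as in the $\Phi_9(1^5)$, $\Phi_{10}(1^5)$ cases), together with Theorem \ref{J'} applied to $N\cong\Phi_3(1^4)$ for the matching upper bound $p^4$.
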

\begin{proof}
For the groups $G$ belonging  to the isoclinism class $\Phi_7$, it follows from Theorem \ref{2}(ii), taking $K=\Z(G)$, that $d(\M(G)) \geq 3$.

For the group $G=\Phi_7(2111)a$, consider the normal subgroup
$$N=\langle\alpha,\alpha_1,\alpha_2,\alpha_3\rangle \cong \Phi_3(211)a$$
of $G$ of order $p^4$. By Theorem \ref{SHHH}, $\M(N) \cong \mathbb{Z}_p$. Then, by Theorem \ref{J'}, $|\M(G)|$ divides $|\M(N)||N/N'|$. Since $|N/N'| = p^2$, it follows that  $|\M(G)|=p^3$. Hence $G \wedge G$ is of order $p^5$.
By Proposition \ref{B}, $G \wedge G$ is generated by 
\[\{[\alpha_1,\alpha^\phi], [\alpha_2,\alpha^\phi], [\alpha_3,\alpha^\phi], [\beta,\alpha^\phi], [\alpha_2,\alpha_1^\phi],[\alpha_3,\alpha_1^\phi],[\beta,\alpha_1^\phi], [\alpha_3,\alpha_2^\phi], [\beta,\alpha_2^\phi],[\beta,\alpha_3^\phi]\}.\] 
By Lemma \ref{R}(viii), we have $$[\alpha_2,\alpha_3^\phi] =[\alpha_1,\alpha,\alpha_3^\phi]=1.$$
For $x \in \{\alpha,\alpha_1,\alpha_2,\alpha_3,\beta\}$, the following identities hold:
\begin{eqnarray*}
&&[\alpha_3,x^\phi]^p=[\alpha_3^p,x^\phi]=1=[\beta^p,x^\phi]=[\beta,x^\phi]^p,\hspace{4cm}\\
&&[\alpha_2,x^\phi]^p=[\alpha_2^p,x^\phi]=1=[\alpha_1^p,\alpha^\phi]=[\alpha_1,\alpha^\phi]^p.
\end{eqnarray*}
By Lemma \ref{R}(vi), it follows that $G \wedge G$ is abelian, and consequently 
$$G \wedge G\cong \mathbb{Z}_p^{(5)}$$
and
$$\M(G) \cong  \mathbb{Z}_p^{(3)}.$$

For the group $G=\Phi_7(2111)b_r, (r=1$ or $\nu)$, considering the normal subgroup 
$$N=\langle \alpha,\alpha_1,\alpha_2,\alpha_3\rangle \cong \Phi_3(211)b_r$$
of $G$ of order $p^4$, the proof is similar to the preceding case.

For the group $G=\Phi_7(2111)c$, considering the normal subgroup 
$$N=\langle\alpha_1, \alpha\beta,\alpha_2,\alpha_3\rangle \cong \Phi_3(211)a$$ 
of $G$ of order $p^4$, the proof follows as above.

Finally consider the group $G = \Phi_7(1^5)$. It follows from \cite[Main Theorem]{SX} that  
$$\M(G)\cong \mathbb{Z}_p^{(4)}.$$ 
That  $G \wedge G \cong \mathbb{Z}_p^{(6)}$, follows as in the above cases. 
\hfill $\Box$

\end{proof}

 Theorem \ref{E} and all lemmas in  Sections 3, 4 and 5 yield  our main result, which we present in  the following table:

\newpage

\begin{table}[H]
\centering
 \begin{tabular}{||c c c c c c||} 
 \hline
 $G$ & $G^{ab}$ & $\Gamma (G^{ab})$ & $\M(G)$ & $G \wedge G$ &  $G \otimes G$ \\ [.5ex] 
 \hline\hline
$\Phi_2(311)a$ & $\mathbb{Z}_{p^2} \times \mathbb{Z}_p^{(2)}$ &  $\mathbb{Z}_{p^2} \times \mathbb{Z}_p^{(5)}$ & $\mathbb{Z}_p \times \mathbb{Z}_p$ & $\mathbb{Z}_p^{(3)}$ & $\mathbb{Z}_{p^2} \times \mathbb{Z}_p^{(8)}$ \\ 

$\Phi_2(221)a$ & $\mathbb{Z}_{p^2} \times \mathbb{Z}_p^{(2)}$ &  $\mathbb{Z}_{p^2} \times \mathbb{Z}_p^{(5)}$ & $\mathbb{Z}_p^{(3)}$ & $\mathbb{Z}_{p^2} \times \mathbb{Z}_p^{(2)}$ & $\mathbb{Z}_{p^2}^{(2)} \times \mathbb{Z}_p^{(7)}$ \\ 

$\Phi_2(221)b$ & $\mathbb{Z}_{p^2} \times \mathbb{Z}_p^{(2)}$ &  $\mathbb{Z}_{p^2} \times \mathbb{Z}_p^{(5)}$ & $\mathbb{Z}_p \times \mathbb{Z}_p$ & $\mathbb{Z}_p^{(3)}$ & $\mathbb{Z}_{p^2} \times \mathbb{Z}_p^{(8)}$ \\

$\Phi_2(2111)a$ & $\mathbb{Z}_p^{(4)}$ &  $ \mathbb{Z}_p^{(10)}$ & $\mathbb{Z}_p^{(5)}$ & $\mathbb{Z}_p^{(6)}$ & $\mathbb{Z}_p^{(16)}$\\
 
$\Phi_2(2111)b$ & $\mathbb{Z}_p^{(4)}$ &  $ \mathbb{Z}_p^{(10)}$ & $\mathbb{Z}_p^{(5)}$ & $\mathbb{Z}_p^{(6)}$ & $\mathbb{Z}_p^{(16)}$\\

$\Phi_2(2111)c$ & $\mathbb{Z}_{p^2} \times \mathbb{Z}_p^{(2)}$ &  $\mathbb{Z}_{p^2} \times \mathbb{Z}_p^{(5)}$& $\mathbb{Z}_p^{(4)}$ & $\mathbb{Z}_p^{(5)}$ & $\mathbb{Z}_{p^2} \times \mathbb{Z}_p^{(10)}$\\

$\Phi_2(2111)d$ & $\mathbb{Z}_{p^2} \times \mathbb{Z}_p^{(2)}$ &  $\mathbb{Z}_{p^2} \times \mathbb{Z}_p^{(5)}$& $\mathbb{Z}_p^{(4)}$ & $\mathbb{Z}_p^{(5)}$ & $\mathbb{Z}_{p^2} \times \mathbb{Z}_p^{(10)}$\\

$\Phi_2(1^5)$ & $\mathbb{Z}_p^{(4)}$ &  $ \mathbb{Z}_p^{(10)}$ & $\mathbb{Z}_p^{(7)}$ & $\mathbb{Z}_p^{(8)}$ & $\mathbb{Z}_p^{(18)}$\\

 $\Phi_2(41)$ & $\mathbb{Z}_{p^3} \times \mathbb{Z}_p$ & $\mathbb{Z}_{p^3} \times \mathbb{Z}_p^{(2)}$ & $\{1\}$ & $\mathbb{Z}_p$ & $\mathbb{Z}_{p^3} \times \mathbb{Z}_p^{(3)}$ \\ 
 
 $\Phi_2(32)a_1$ & $\mathbb{Z}_{p^2} \times \mathbb{Z}_{p^2}$ & $\mathbb{Z}_{p^2}^{(3)}$ & $\mathbb{Z}_p$ &$\mathbb{Z}_{p^2}$ & $\mathbb{Z}_{p^2}^{(4)}$ \\
 
  $\Phi_2(32)a_2$ & $\mathbb{Z}_{p^3} \times \mathbb{Z}_p$ &  $\mathbb{Z}_{p^3} \times \mathbb{Z}_p^{(2)}$ & $\mathbb{Z}_p$ & $\mathbb{Z}_{p^2}$ & $\mathbb{Z}_{p^3} \times \mathbb{Z}_{p^2}\times \mathbb{Z}_p^{(2)}$  \\
 
  $\Phi_2(311)b$ & $\mathbb{Z}_{p^2} \times \mathbb{Z}_p^{(2)}$ &  $\mathbb{Z}_{p^2} \times \mathbb{Z}_p^{(5)}$ & $\mathbb{Z}_p \times\mathbb{Z}_p$ & $\mathbb{Z}_p^{(3)}$ & $\mathbb{Z}_{p^2} \times \mathbb{Z}_p^{(8)}$  \\
  
 $\Phi_2(311)c$ & $\mathbb{Z}_{p^3} \times \mathbb{Z}_p$ &  $\mathbb{Z}_{p^3} \times \mathbb{Z}_p^{(2)}$ &$\mathbb{Z}_p \times\mathbb{Z}_p$ & $\mathbb{Z}_p^{(3)}$ & $\mathbb{Z}_{p^3} \times \mathbb{Z}_p^{(5)}$  \\ 
 
$\Phi_2(221)c$ & $\mathbb{Z}_{p^2} \times \mathbb{Z}_p^{(2)}$ &  $\mathbb{Z}_{p^2} \times \mathbb{Z}_p^{(5)}$ &$\mathbb{Z}_{p^2} \times\mathbb{Z}_p$ & $\mathbb{Z}_{p^2} \times \mathbb{Z}_p^{(2)}$ & $\mathbb{Z}_{p^2}^{(2)} \times \mathbb{Z}_p^{(7)}$  \\ 
 
$\Phi_2(221)d$ & $\mathbb{Z}_{p^2} \times \mathbb{Z}_{p^2}$ & $\mathbb{Z}_{p^2}^{(3)}$ & $\mathbb{Z}_p^{(3)}$ & $\mathbb{Z}_{p^2} \times \mathbb{Z}_p^{(2)}$ & $\mathbb{Z}_{p^2}^{(4)} \times \mathbb{Z}_p^{(2)}$ \\

$\Phi_3(2111)a$ & $\mathbb{Z}_p^{(3)}$ &  $\mathbb{Z}_p^{(6)}$ & $\mathbb{Z}_p^{(3)}$ & $\mathbb{Z}_p^{(5)}$ & $\mathbb{Z}_p^{(11)}$  \\ 

$\Phi_3(2111)b_r$ & $\mathbb{Z}_p^{(3)}$ &  $\mathbb{Z}_p^{(6)}$ & $\mathbb{Z}_p^{(3)}$ & $\mathbb{Z}_p^{(5)}$ & $\mathbb{Z}_p^{(11)}$  \\ 

$\Phi_3(1^5)$ & $\mathbb{Z}_p^{(3)}$ &  $\mathbb{Z}_p^{(6)}$ & $\mathbb{Z}_p^{(4)}$ & $\mathbb{Z}_p^{(6)}$ & $\mathbb{Z}_p^{(12)}$  \\ 

$\Phi_3(311)a$ & $\mathbb{Z}_{p^2} \times \mathbb{Z}_p$ & $\mathbb{Z}_{p^2} \times \mathbb{Z}_p^{(2)}$ & $\mathbb{Z}_p$ & $\mathbb{Z}_p^{(3)}$ & $\mathbb{Z}_{p^2} \times \mathbb{Z}_p^{(5)}$\\

$\Phi_3(311)b_r$ & $\mathbb{Z}_{p^2} \times \mathbb{Z}_p$ & $\mathbb{Z}_{p^2} \times \mathbb{Z}_p^{(2)}$ & $\mathbb{Z}_p$ & $\mathbb{Z}_p^{(3)}$ & $\mathbb{Z}_{p^2} \times \mathbb{Z}_p^{(5)}$\\

$\Phi_3(221)a$ & $\mathbb{Z}_{p^2} \times \mathbb{Z}_p$ & $\mathbb{Z}_{p^2} \times \mathbb{Z}_p^{(2)}$ & $\mathbb{Z}_p$ & $\mathbb{Z}_p^{(3)}$ & $\mathbb{Z}_{p^2} \times \mathbb{Z}_p^{(5)}$\\

$\Phi_3(221)b_r$ & $\mathbb{Z}_{p^2} \times \mathbb{Z}_p$ & $\mathbb{Z}_{p^2} \times \mathbb{Z}_p^{(2)}$ & $\mathbb{Z}_p \times \mathbb{Z}_p$ & $\mathbb{Z}_{p^2} \times \mathbb{Z}_p^{(2)}$ & $\mathbb{Z}_{p^2}^{(2)} \times \mathbb{Z}_p^{(4)}$\\

$\Phi_3(2111)c$ & $\mathbb{Z}_p^{(3)}$ & $ \mathbb{Z}_p^{(6)}$ & $\mathbb{Z}_p^{(3)}$ & $\mathbb{Z}_p^{(5)}$ & $\mathbb{Z}_p^{(11)}$\\

$\Phi_3(2111)d$ & $\mathbb{Z}_{p^2} \times \mathbb{Z}_p$ & $\mathbb{Z}_{p^2} \times \mathbb{Z}_p^{(2)}$ & $\mathbb{Z}_p^{(2)}$ & $\mathbb{Z}_p^{(4)}$ & $\mathbb{Z}_{p^2} \times \mathbb{Z}_p^{(6)}$\\

$\Phi_3(2111)e$ & $\mathbb{Z}_{p^2} \times \mathbb{Z}_p$ & $\mathbb{Z}_{p^2} \times \mathbb{Z}_p^{(2)}$ & $\mathbb{Z}_p^{(2)}$ & $\mathbb{Z}_p^{(4)}$ & $\mathbb{Z}_{p^2} \times \mathbb{Z}_p^{(6)}$\\

$\Phi_4(221)a$ & $\mathbb{Z}_p^{(3)}$ &  $\mathbb{Z}_p^{(6)}$ & $\mathbb{Z}_p$ & $\mathbb{Z}_p^{(3)}$ & $\mathbb{Z}_p^{(9)}$  \\ 

$\Phi_4(221)b$ & $\mathbb{Z}_p^{(3)}$ &  $\mathbb{Z}_p^{(6)}$ & $\mathbb{Z}_p \times \mathbb{Z}_p$ & $\mathbb{Z}_{p^2} \times \mathbb{Z}_p^{(2)}$ & $\mathbb{Z}_{p^2} \times \mathbb{Z}_p^{(8)}$  \\ 

$\Phi_4(221)c$ & $\mathbb{Z}_p^{(3)}$ &  $\mathbb{Z}_p^{(6)}$ & $\mathbb{Z}_p$ & $\mathbb{Z}_p^{(3)}$ & $\mathbb{Z}_p^{(9)}$  \\ 

$\Phi_4(221)d_r,\;r \ne \frac{1}{2}(p-1)$ & $\mathbb{Z}_p^{(3)}$ &  $\mathbb{Z}_p^{(6)}$ & $\mathbb{Z}_p$ & $\mathbb{Z}_p^{(3)}$ & $\mathbb{Z}_p^{(9)}$  \\ 

$\Phi_4(221)d_{\frac{1}{2}(p-1)}$ & $\mathbb{Z}_p^{(3)}$ &  $\mathbb{Z}_p^{(6)}$ & $\mathbb{Z}_{p^2}$ & $\mathbb{Z}_{p^2} \times\mathbb{Z}_p^{(2)}$ & $\mathbb{Z}_{p^2}\times \mathbb{Z}_p^{(8)}$  \\ 

$\Phi_4(221)e$ & $\mathbb{Z}_p^{(3)}$ &  $\mathbb{Z}_p^{(6)}$ & $\mathbb{Z}_p$ & $\mathbb{Z}_p^{(3)}$ & $\mathbb{Z}_p^{(9)}$  \\ 

$\Phi_4(221)f_0$ & $\mathbb{Z}_p^{(3)}$ &  $\mathbb{Z}_p^{(6)}$ & $\mathbb{Z}_{p^2}$ & $\mathbb{Z}_{p^2} \times \mathbb{Z}_p^{(2)}$ & $\mathbb{Z}_{p^2} \times \mathbb{Z}_p^{(8)}$  \\

$\Phi_4(221)f_r$ & $\mathbb{Z}_p^{(3)}$ &  $\mathbb{Z}_p^{(6)}$ & $\mathbb{Z}_p$ & $\mathbb{Z}_p^{(3)}$ & $\mathbb{Z}_p^{(9)}$  \\

$\Phi_4(2111)a$ & $\mathbb{Z}_p^{(3)}$ &  $\mathbb{Z}_p^{(6)}$ &  $  \mathbb{Z}_p^{(3)}$ & $\mathbb{Z}_p^{(5)}$ & $\mathbb{Z}_p^{(11)}$  \\

$\Phi_4(2111)b$ & $\mathbb{Z}_p^{(3)}$ &  $\mathbb{Z}_p^{(6)}$ &  $  \mathbb{Z}_p^{(3)}$ & $\mathbb{Z}_p^{(5)}$ & $\mathbb{Z}_p^{(11)}$  \\

$\Phi_4(2111)c$ & $\mathbb{Z}_p^{(3)}$ &  $\mathbb{Z}_p^{(6)}$ &  $  \mathbb{Z}_p^{(3)}$ & $\mathbb{Z}_p^{(5)}$ & $\mathbb{Z}_p^{(11)}$  \\

$\Phi_4(1^5)$ & $\mathbb{Z}_p^{(3)}$ &  $\mathbb{Z}_p^{(6)}$ &  $  \mathbb{Z}_p^{(6)}$ & $\mathbb{Z}_p^{(8)}$ & $\mathbb{Z}_p^{(14)}$  \\

$\Phi_5(2111)$ & $\mathbb{Z}_p^{(4)}$ &  $\mathbb{Z}_p^{(10)}$ &  $\mathbb{Z}_p^{(5)}$ & $\mathbb{Z}_p^{(6)}$ & $\mathbb{Z}_p^{(16)}$  \\

$\Phi_5(1^5)$ & $\mathbb{Z}_p^{(4)}$ &  $\mathbb{Z}_p^{(10)}$ &  $\mathbb{Z}_p^{(5)}$ & $\mathbb{Z}_p^{(6)}$ & $\mathbb{Z}_p^{(16)}$  \\
[.5ex] 
 \hline
\end{tabular}
\end{table}

\begin{table}[H]
\centering
 \begin{tabular}{||c c c c c c||} 
 \hline
 $G$ & $G^{ab}$ & $\Gamma (G^{ab})$ & $\M(G)$ & $G \wedge G$ &  $G \otimes G$ \\ [.5ex] 
 \hline\hline
$\Phi_6(221)a$ & $\mathbb{Z}_p^{(2)}$ &  $\mathbb{Z}_p^{(3)}$ &  $\{1\}$ & $\mathbb{Z}_p^{(3)}$ & $\mathbb{Z}_p^{(6)}$  \\

$\Phi_6(221)b_r, \;r \ne \frac{1}{2}(p-1)$ & $\mathbb{Z}_p^{(2)}$ &  $\mathbb{Z}_p^{(3)}$ &  $\{1\}$ & $\mathbb{Z}_p^{(3)}$ & $\mathbb{Z}_p^{(6)}$  \\

$\Phi_6(221)b_{\frac{1}{2}(p-1)}$ & $\mathbb{Z}_p^{(2)}$ &  $\mathbb{Z}_p^{(3)}$ &  $\mathbb{Z}_p$ & $\mathbb{Z}_{p^2} \times \mathbb{Z}_p^{(2)}$ & $\mathbb{Z}_{p^2} \times \mathbb{Z}_p^{(5)}$  \\

$\Phi_6(221)c_r$ & $\mathbb{Z}_p^{(2)}$ &  $\mathbb{Z}_p^{(3)}$ &  $\{1\}$ & $\mathbb{Z}_p^{(3)}$ & $\mathbb{Z}_p^{(6)}$  \\

$\Phi_6(221)d_0$ & $\mathbb{Z}_p^{(2)}$ &  $\mathbb{Z}_p^{(3)}$ &  $\mathbb{Z}_p$ & $\mathbb{Z}_{p^2} \times \mathbb{Z}_p^{(2)}$ & $\mathbb{Z}_{p^2} \times \mathbb{Z}_p^{(5)}$  \\

$\Phi_6(221)d_r$ & $\mathbb{Z}_p^{(2)}$ &  $\mathbb{Z}_p^{(3)}$ &  $\{1\}$ & $\mathbb{Z}_p^{(3)}$ & $\mathbb{Z}_p^{(6)}$  \\

$\Phi_6(2111)a$ & $\mathbb{Z}_p^{(2)}$ &  $\mathbb{Z}_p^{(3)}$ &  $\mathbb{Z}_p$ & $\mathbb{Z}_p^{(4)}$ & $\mathbb{Z}_p^{(7)}$  \\

$\Phi_6(2111)b_r$ & $\mathbb{Z}_p^{(2)}$ &  $\mathbb{Z}_p^{(3)}$ &  $\mathbb{Z}_p$ & $\mathbb{Z}_p^{(4)}$ & $\mathbb{Z}_p^{(7)}$  \\

$\Phi_6(1^5)$ & $\mathbb{Z}_p^{(2)}$ &  $\mathbb{Z}_p^{(3)}$ &  $\mathbb{Z}_p^{(3)}$ & $\mathbb{Z}_p^{(6)}$ & $\mathbb{Z}_p^{(9)}$  \\

$\Phi_7(2111)a$ & $\mathbb{Z}_p^{(3)}$ &  $\mathbb{Z}_p^{(6)}$ &  $\mathbb{Z}_p^{(3)}$ & $\mathbb{Z}_p^{(5)}$ & $\mathbb{Z}_p^{(11)}$  \\

$\Phi_7(2111)b_r$ & $\mathbb{Z}_p^{(3)}$ &  $\mathbb{Z}_p^{(6)}$ &  $\mathbb{Z}_p^{(3)}$ & $\mathbb{Z}_p^{(5)}$ & $\mathbb{Z}_p^{(11)}$  \\

$\Phi_7(2111)c$ & $\mathbb{Z}_p^{(3)}$ &  $\mathbb{Z}_p^{(6)}$ &  $\mathbb{Z}_p^{(3)}$ & $\mathbb{Z}_p^{(5)}$ & $\mathbb{Z}_p^{(11)}$  \\

$\Phi_7(1^5)$ & $\mathbb{Z}_p^{(3)}$ &  $\mathbb{Z}_p^{(6)}$ &  $\mathbb{Z}_p^{(4)}$ & $\mathbb{Z}_p^{(6)}$ & $\mathbb{Z}_p^{(12)}$  \\

$\Phi_8(32)$ & $\mathbb{Z}_{p^2} \times \mathbb{Z}_p$ &  $\mathbb{Z}_{p^2} \times \mathbb{Z}_p^{(2)}$ & $\{1\}$ & $\mathbb{Z}_{p^2}$ & $\mathbb{Z}_{p^2}^{(2)} \times \mathbb{Z}_p^{(2)}$  \\

$\Phi_9(2111)a$ & $\mathbb{Z}_p^{(2)}$ &  $\mathbb{Z}_p^{(3)}$ & $\mathbb{Z}_p$ & $\mathbb{Z}_p^{(4)}$ & $\mathbb{Z}_p^{(7)}$  \\

$\Phi_9(2111)b_r$ & $\mathbb{Z}_p^{(2)}$ &  $\mathbb{Z}_p^{(3)}$ & $\mathbb{Z}_p$ & $\mathbb{Z}_p^{(4)}$ & $\mathbb{Z}_p^{(7)}$  \\

$\Phi_9(1^5)$ & $\mathbb{Z}_p^{(2)}$ &  $\mathbb{Z}_p^{(3)}$ &  $  \mathbb{Z}_p^{(3)}$ & $\Phi_2(111) \times \mathbb{Z}_p^{(3)}$ & $\Phi_2(111)\times \mathbb{Z}_p^{(6)}$  \\

$\Phi_{10}(2111)a_r$ & $\mathbb{Z}_p^{(2)}$ &  $\mathbb{Z}_p^{(3)}$ & $\mathbb{Z}_p$ & $\mathbb{Z}_p^{(4)}$ & $\mathbb{Z}_p^{(7)}$  \\

$\Phi_{10}(2111)b_r$ & $\mathbb{Z}_p^{(2)}$ &  $\mathbb{Z}_p^{(3)}$ & $\mathbb{Z}_p$ & $\mathbb{Z}_p^{(4)}$ & $\mathbb{Z}_p^{(7)}$  \\

$\Phi_{10}(1^5)$ & $\mathbb{Z}_p^{(2)}$ &  $\mathbb{Z}_p^{(3)}$ &  $  \mathbb{Z}_p^{(3)}$ & $\Phi_2(111) \times \mathbb{Z}_p^{(3)}$ & $\Phi_2(111)\times \mathbb{Z}_p^{(6)}$  \\
 [.5ex] 
 \hline
\end{tabular}
\hspace{1cm}
\caption{Groups of order $p^5$, $p \ge 5$}\label{Table1}
\end{table}

In the following table  we determine the capability of non-abelian $p$-groups of order $p^5$, $p \ge 5$,  using Theorem \ref{G1}. The description of epicenter $Z^*(G)$ is also given for each group $G$ under consideration.
\begin{table}[H]
\centering
 \begin{tabular}{||c c c | c c c||} 
 \hline
 $G$ & Capability & Epicenter & $G$ & Capability & Epicenter\\ [.5ex] 
 \hline\hline
$\Phi_2(311)a$ & Not Capable & $\langle \alpha^p \rangle$ & $\Phi_4(221)e$ &   Not capable & $\Z(G)$\\
 
$\Phi_2(221)a$ & Capable & $\langle 1 \rangle$ & $\Phi_4(221)f_0$ &  Capable & $\langle 1 \rangle$\\
 
$\Phi_2(221)b$ & Not Capable & $\gamma_2(G) \times \langle \alpha_3^p \rangle$ &$\Phi_4(221)f_r$ &  Not capable & $\Z(G)$\\
 
$\Phi_2(2111)a$ & Not Capable & $\gamma_2(G)$  &$\Phi_4(2111)a$ & Not capable & $\langle \beta_2 \rangle$\\
  
$\Phi_2(2111)b$ & Not Capable & $\gamma_2(G)$  & $\Phi_4(2111)b$ &  Not capable & $\langle \beta_1 \rangle$\\
  
$\Phi_2(2111)c$ & Not Capable  & $\langle \alpha^p \rangle$ & $\Phi_4(2111)c$ &  Not Capable & $\langle \beta_1 \rangle$\\
  
$\Phi_2(2111)d$ & Not Capable & $\langle \alpha_3^p \rangle$ & $\Phi_4(1^5)$ &  Capable & $\langle 1 \rangle$\\
  
$\Phi_2(1^5)$ & Capable & $\langle 1 \rangle$ & $\Phi_5(2111)$ &  Not capable & $\Z(G)$ \\
 
$\Phi_2(41)$ & Not Capable  & $\Z(G$)  & $\Phi_5(1^5)$ &  Not capable & $\Z(G)$\\
 
$\Phi_2(32)a_1$ &  Not Capable & $\gamma_2(G)$ & $\Phi_6(221)a$ &  Not capable & $\Z(G)$\\
 
$\Phi_2(32)a_2$ &  Not capable & $\langle \alpha^{p^2} \rangle$  & $\Phi_6(221)b_r,\;r \ne \frac{1}{2}(p-1)$ &   Not capable & $\Z(G)$\\
 
$\Phi_2(311)b$ &  Not capable &  $\langle \gamma^p \rangle$  &$\Phi_6(221)b_{\frac{1}{2}(p-1)}$ &  Capable  & $\langle 1 \rangle$\\
  
$\Phi_2(311)c$ & Not capable &  $\langle \alpha^p \rangle$ & $\Phi_6(221)c_r$ &  Not capable & $\Z(G)$\\
 
$\Phi_2(221)c$ &  Capable & $\langle 1 \rangle$ & $\Phi_6(221)d_0$ & Capable & $\langle 1 \rangle$\\
 
$\Phi_2(221)d$ & Capable & $\langle 1 \rangle$  & $\Phi_6(221)d_r$ &  Not capable & $\Z(G)$\\

$\Phi_3(2111)a$ & Not capable & $\langle \alpha_3 \rangle$& $\Phi_6(2111)a$ &  Not capable & $\langle \beta_1 \rangle$\\

$\Phi_3(2111)b_r$ & Not capable & $\langle \alpha_3 \rangle$ & $\Phi_6(2111)b_r$ &  Not capable & $\langle \beta_1 \rangle$ \\

$\Phi_3(1^5)$ & Capable & $\langle 1 \rangle$ & $\Phi_6(1^5)$ & Capable & $\langle 1 \rangle$\\

$\Phi_3(311)a$ & Not capable &  $\Z(G)$  & $\Phi_7(2111)a$ &  Not capable & $\Z(G)$\\

$\Phi_3(311)b_r$  & Not capable & $\Z(G)$ &$\Phi_7(2111)b_r$ &  Not capable & $\Z(G)$\\

$\Phi_3(221)a$ & Not capable &  $\Z(G)$ & $\Phi_7(2111)c$ &  Not capable & $\Z(G)$\\

$\Phi_3(221)b_r$ & Capable & $\langle 1 \rangle$ & $\Phi_7(1^5)$ &  Capable & $\langle 1 \rangle$\\

$\Phi_3(2111)c$ &  Not capable&  $\langle \gamma^p \rangle$  & $\Phi_8(32)$ &  Not capable & $\Z(G)$\\

$\Phi_3(2111)d$ & Not capable & $\langle \alpha^p \rangle$ & $\Phi_9(2111)a$ &  Not capable & $\Z(G)$\\

$\Phi_3(2111)e$ & Not capable & $\langle \alpha_1^p \rangle$ &$\Phi_9(2111)b_r$ & Not capable & $\Z(G)$\\

$\Phi_4(221)a$ & Not capable & $\Z(G)$ & $\Phi_9(1^5)$ &  Capable & $\langle 1 \rangle$\\ 

$\Phi_4(221)b$ &  Capable & $\langle 1 \rangle$ & $\Phi_{10}(2111)a_r$ &  Not capable & $\Z(G)$\\ 

$\Phi_4(221)c$ &   Not capable & $\Z(G)$ & $\Phi_{10}(2111)b_r$ & Not capable & $\Z(G)$\\ 

$\Phi_4(221)d_r, \;r \ne \frac{1}{2}(p-1)$ & Not capable & $\Z(G)$ & $\Phi_{10}(1^5)$ & Capable & $\langle 1 \rangle$ \\

$\Phi_4(221)d_{\frac{1}{2}(p-1)}$ & Capable & $\langle 1 \rangle$ &  &  &\\
 [.5ex] 
 \hline
 \end{tabular}
 \hspace{1cm}
\caption{Capability of groups of order $p^5$, $p \ge 5$}\label{Table2}
\end{table}

\section{Non-abelian groups of order $2^5$ and $3^5$}

The following table takes care of groups of order $2^5$.
\begin{table}[H]
\centering
 \begin{tabular}{||c c c c c c||} 
 \hline
 Group ID & $\M(G)$ & $G \wedge G$ & $G \otimes G$ & Capability & Epicenter \\ 
 [.5ex] 
 \hline\hline
$2$ & $\mathbb{Z}_2^{(3)}$ & $\mathbb{Z}_4 \times \mathbb{Z}_2^{(2)}$ &  $\mathbb{Z}_4^{(4)} \times \mathbb{Z}_2^{(2)}$ & Capable & $\{1\}$
\\

$4$ & $\mathbb{Z}_2$ & $\mathbb{Z}_4$ &  $\mathbb{Z}_8 \times \mathbb{Z}_4^{(3)}$ & Not Capable & $\mathbb{Z}_2$\\

$5$ & $\mathbb{Z}_2^{(2)}$ & $\mathbb{Z}_4 \times \mathbb{Z}_2$ &  $\mathbb{Z}_8 \times \mathbb{Z}_4 \times \mathbb{Z}_2^{(3)}$ & Not Capable & $\mathbb{Z}_2$\\

$6$ & $\mathbb{Z}_2^{(2)}$ & $\mathbb{Z}_4 \times \mathbb{Z}_2^{(2)}$ & $\mathbb{Z}_4^{(2)} \times \mathbb{Z}_2^{(4)}$  & Capable & $\{1\}$\\

$7$ & $\mathbb{Z}_2$ & $\mathbb{Z}_4 \times \mathbb{Z}_2$& $\mathbb{Z}_8 \times \mathbb{Z}_4 \times \mathbb{Z}_2^{(3)}$ & Not Capable  & $\mathbb{Z}_2$\\

$8$ & $\mathbb{Z}_2$ & $\mathbb{Z}_4 \times \mathbb{Z}_2$ & $\mathbb{Z}_8 \times \mathbb{Z}_4 \times \mathbb{Z}_2^{(3)}$  & Not Capable & $\mathbb{Z}_2$\\

$9$ & $\mathbb{Z}_2^{(2)}$ & $\mathbb{Z}_8 \times \mathbb{Z}_2$ &  $\mathbb{Z}_8 \times \mathbb{Z}_4 \times \mathbb{Z}_2^{(3)}$ & Capable & $\{1\}$ \\

$10$ & $\mathbb{Z}_2$ & $\mathbb{Z}_4 \times \mathbb{Z}_2$ &  $\mathbb{Z}_8 \times \mathbb{Z}_4 \times \mathbb{Z}_2^{(3)}$ & Not Capable & $\mathbb{Z}_2$\\

$11$ & $\mathbb{Z}_2$ & $\mathbb{Z}_4 \times \mathbb{Z}_2$ & $\mathbb{Z}_4^{(2)} \times \mathbb{Z}_2^{(3)}$ & Not Capable  & $\mathbb{Z}_2$\\

$12$ & $\mathbb{Z}_2$ & $\mathbb{Z}_4$ & $\mathbb{Z}_8 \times \mathbb{Z}_4^{(2)} \times \mathbb{Z}_2$ & Not Capable & $\mathbb{Z}_4$ \\

$13$ & $\mathbb{Z}_2$ & $\mathbb{Z}_8$ &  $\mathbb{Z}_8 \times \mathbb{Z}_4^{(2)} \times \mathbb{Z}_2$  & Capable & $\{1\}$ \\

$14$ & $\mathbb{Z}_2$ & $\mathbb{Z}_8$ &  $\mathbb{Z}_8 \times \mathbb{Z}_4^{(2)} \times \mathbb{Z}_2$  & Not Capable & $\mathbb{Z}_2$\\

$15$ & $\{1\}$ & $\mathbb{Z}_4$ &  $\mathbb{Z}_8 \times \mathbb{Z}_4^{(2)} \times \mathbb{Z}_2$  & Not Capable & $\mathbb{Z}_4$\\

$17$ & $\{1\}$ & $\mathbb{Z}_2$ &   $\mathbb{Z}_{16} \times \mathbb{Z}_2^{(3)}$ & Not Capable & $\mathbb{Z}_8$\\

$18$ & $\mathbb{Z}_2$ & $\mathbb{Z}_{16}$ &  $\mathbb{Z}_{16} \times \mathbb{Z}_2^{(3)}$ & Capable & $\{1\}$\\

$19$ & $\{1\}$ & $\mathbb{Z}_8$ & $\mathbb{Z}_{16} \times \mathbb{Z}_2^{(3)}$ & Not Capable  & $\mathbb{Z}_2$\\

$20$ & $\{1\}$ & $\mathbb{Z}_8$ &  $\mathbb{Z}_{16} \times \mathbb{Z}_2^{(3)}$ & Not Capable  & $\mathbb{Z}_2$\\

$22$ & $\mathbb{Z}_2^{(4)}$ & $\mathbb{Z}_4 \times \mathbb{Z}_2^{(3)}$ &  $\mathbb{Z}_{4}^{(2)} \times \mathbb{Z}_2^{(8)}$ & Capable & $\{1\}$\\

$23$ & $\mathbb{Z}_2^{(3)}$ & $\mathbb{Z}_4 \times \mathbb{Z}_2^{(2)}$ & $\mathbb{Z}_{4}^{(3)} \times \mathbb{Z}_2^{(6)}$  & Not Capable & $\mathbb{Z}_2$\\

$24$ & $\mathbb{Z}_2 \times \mathbb{Z}_4$ & $\mathbb{Z}_4 \times \mathbb{Z}_2^{(2)}$ & $\mathbb{Z}_{4}^{(2)} \times \mathbb{Z}_2^{(7)}$ & Capable & $\{1\}$\\

$25$ & $\mathbb{Z}_2^{(3)}$ & $\mathbb{Z}_4 \times \mathbb{Z}_2^{(2)}$ & $\mathbb{Z}_{4}^{(2)} \times \mathbb{Z}_2^{(7)}$ & Not Capable  & $\mathbb{Z}_2$\\

$26$ & $\mathbb{Z}_2^{(2)}$ & $\mathbb{Z}_2^{(3)}$&  $\mathbb{Z}_{4}^{(3)} \times \mathbb{Z}_2^{(6)}$ & Not Capable & $\mathbb{Z}_2 \times\mathbb{Z}_2$\\

$27$ & $\mathbb{Z}_2^{(4)}$ & $\mathbb{Z}_4^{(2)} \times \mathbb{Z}_2^{(2)}$&  $\mathbb{Z}_{4}^{(2)} \times \mathbb{Z}_2^{(8)}$ & Capable & $\{1\}$\\

$28$ & $\mathbb{Z}_2^{(3)}$ & $\mathbb{Z}_4^{(2)} \times \mathbb{Z}_2$&  $\mathbb{Z}_{4}^{(2)} \times \mathbb{Z}_2^{(7)}$ & Capable  &$\{1\}$ \\

$29$ & $\mathbb{Z}_2^{(2)}$ & $\mathbb{Z}_4 \times \mathbb{Z}_2^{(2)}$ &  $\mathbb{Z}_{4}^{(3)} \times \mathbb{Z}_2^{(6)}$ & Not Capable & $\mathbb{Z}_2$\\

$30$ & $\mathbb{Z}_2^{(2)}$ & $\mathbb{Z}_4 \times \mathbb{Z}_2^{(2)}$ & $\mathbb{Z}_{4}^{(2)} \times \mathbb{Z}_2^{(7)}$ & Not Capable  & $\mathbb{Z}_2$\\

$31$ & $\mathbb{Z}_2 \times \mathbb{Z}_4$ & $\mathbb{Z}_4^{(2)} \times \mathbb{Z}_2$ & $\mathbb{Z}_{4}^{(3)} \times \mathbb{Z}_2^{(6)}$ & Capable  &$\{1\}$\\

$32$ & $\mathbb{Z}_2$ & $\mathbb{Z}_2^{(3)}$ &  $\mathbb{Z}_{4}^{(2)} \times \mathbb{Z}_2^{(7)}$ & Not Capable & $\mathbb{Z}_2 \times\mathbb{Z}_2$ \\

$33$ & $\mathbb{Z}_2$ & $\mathbb{Z}_2^{(3)}$ & $\mathbb{Z}_{4}^{(2)} \times \mathbb{Z}_2^{(7)}$ &  Not Capable  & $\mathbb{Z}_2 \times\mathbb{Z}_2$\\

$34$ & $\mathbb{Z}_2^{(2)} \times \mathbb{Z}_4$ & $\mathbb{Z}_4^{(3)}$ &  $\mathbb{Z}_{4}^{(3)} \times \mathbb{Z}_2^{(6)}$ &  Capable & $\{1\}$\\

$35$ & $\mathbb{Z}_2^{(2)}$ & $\mathbb{Z}_4 \times \mathbb{Z}_2^{(2)}$ &  $\mathbb{Z}_{4}^{(4)} \times \mathbb{Z}_2^{(5)}$ & Not Capable & $\mathbb{Z}_2$ \\

$37$ & $\mathbb{Z}_2^{(2)}$ &  $\mathbb{Z}_2^{(3)}$ & $\mathbb{Z}_8 \times \mathbb{Z}_2^{(8)}$ & Not Capable  & $\mathbb{Z}_4$\\

$38$ & $\mathbb{Z}_2^{(2)}$ &  $\mathbb{Z}_2^{(3)}$ &  $\mathbb{Z}_4 \times \mathbb{Z}_2^{(8)}$ & Not Capable & $\mathbb{Z}_4$\\

$39$ & $\mathbb{Z}_2^{(3)}$ & $\mathbb{Z}_8 \times \mathbb{Z}_2^{(2)}$&  $\mathbb{Z}_8 \times \mathbb{Z}_2^{(8)}$ & Capable & $\{1\}$\\

$40$ & $\mathbb{Z}_2^{(2)}$ & $\mathbb{Z}_4 \times \mathbb{Z}_2^{(2)}$ &  $\mathbb{Z}_8 \times \mathbb{Z}_2^{(8)}$ & Not Capable & $\mathbb{Z}_2$\\

$41$ & $\mathbb{Z}_2^{(2)}$ & $\mathbb{Z}_4 \times \mathbb{Z}_2^{(2)}$ & $\mathbb{Z}_8 \times \mathbb{Z}_2^{(8)}$ & Not Capable & $\mathbb{Z}_2$\\

$42$ & $\mathbb{Z}_2^{(2)}$ & $\mathbb{Z}_4 \times \mathbb{Z}_2^{(2)}$ &   $\mathbb{Z}_4 \times \mathbb{Z}_2^{(8)}$ & Not Capable & $\mathbb{Z}_2$\\

$43$ & $\mathbb{Z}_2^{(2)}$ & $\mathbb{Z}_4 \times \mathbb{Z}_2^{(2)}$ &   $\mathbb{Z}_4 \times \mathbb{Z}_2^{(8)}$ & Not Capable & $\mathbb{Z}_2$\\

$44$ & $\mathbb{Z}_2^{(2)}$ & $\mathbb{Z}_4 \times \mathbb{Z}_2^{(2)}$ &  $\mathbb{Z}_4 \times \mathbb{Z}_2^{(8)}$ & Not Capable & $\mathbb{Z}_2$\\

$46$ & $\mathbb{Z}_2^{(6)}$ & $\mathbb{Z}_4 \times \mathbb{Z}_2^{(5)}$ &  $\mathbb{Z}_4 \times \mathbb{Z}_2^{(15)}$ & Capable & $\{1\}$\\

$47$ & $\mathbb{Z}_2^{(5)}$ & $\mathbb{Z}_2^{(6)}$  &  $\mathbb{Z}_4^{(2)} \times \mathbb{Z}_2^{(14)}$ &  Not Capable & $\mathbb{Z}_2$\\

$48$ & $\mathbb{Z}_2^{(5)}$ & $\mathbb{Z}_2^{(6)}$ &  $\mathbb{Z}_2^{(16)}$ & Not Capable & $\mathbb{Z}_2$\\

$49$ & $\mathbb{Z}_2^{(5)}$ & $\mathbb{Z}_2^{(6)}$ &  $\mathbb{Z}_2^{(16)}$ & Not Capable & $\mathbb{Z}_2$\\

$50$ & $\mathbb{Z}_2^{(5)}$ & $\mathbb{Z}_2^{(6)}$ &   $\mathbb{Z}_2^{(16)}$ & Not Capable & $\mathbb{Z}_2$\\
[.5ex] 
 \hline
 \end{tabular}
 \hspace{1cm}
\caption{Groups of order $2^5$}\label{Table3}
\end{table}

The following table takes care of groups of order $3^5$.

\begin{table}[H]
\centering
 \begin{tabular}{||c c c c c c||} 
 \hline
 Group ID & $\M(G)$ & $G \wedge G$ & $G \otimes G$ & Capability & Epicenter \\ 
 [.5ex] 
 \hline\hline
$2$ & $\mathbb{Z}_3^{(3)}$ & $\mathbb{Z}_9 \times \mathbb{Z}_3^{(2)}$  & $\mathbb{Z}_9^{(4)} \times \mathbb{Z}_3^{(2)}$   & Capable  &$\{1\}$ \\

$3$ & $\mathbb{Z}_3^{(2)}$ & $\mathbb{Z}_9 \times \mathbb{Z}_3^{(3)}$ & $\mathbb{Z}_9 \times \mathbb{Z}_3^{(6)}$  &  Capable  &$\{1\}$ \\

$4$ & $\mathbb{Z}_3$ & $\mathbb{Z}_9 \times \mathbb{Z}_3^{(2)}$ &   $\mathbb{Z}_9 \times \mathbb{Z}_3^{(5)}$   & Capable  & $\{1\}$\\

$5$ & $\{1\}$ & $\mathbb{Z}_3^{(3)}$ & $\mathbb{Z}_3^{(6)}$  & Not Capable  & $\mathbb{Z}_3^{(2)}$\\

$6$ & $\mathbb{Z}_3$ & $\mathbb{Z}_9 \times \mathbb{Z}_3^{(2)}$ &   $\mathbb{Z}_9 \times \mathbb{Z}_3^{(5)}$ & Not Capable & $\mathbb{Z}_3$ \\

$7$ & $\{1\}$ & $\mathbb{Z}_3^{(3)}$  & $\mathbb{Z}_3^{(6)}$  & Not Capable   & $\mathbb{Z}_3^{(2)}$ \\

$8$ & $\mathbb{Z}_3$ & $\mathbb{Z}_9 \times \mathbb{Z}_3^{(2)}$ &   $\mathbb{Z}_9 \times \mathbb{Z}_3^{(5)}$ & Not Capable & $\mathbb{Z}_3$\\

$9$ & $\mathbb{Z}_3$ & $\mathbb{Z}_3^{(4)}$ & $\mathbb{Z}_3^{(7)}$  & Capable  & $\{1\}$ \\

$11$ & $\mathbb{Z}_3$ & $\mathbb{Z}_9$  & $ \mathbb{Z}_9^{(4)}$  &  Not Capable & $\mathbb{Z}_3$ \\

$12$ & $\mathbb{Z}_3^{(2)}$ & $\mathbb{Z}_3^{(3)}$  & $\mathbb{Z}_{27} \times \mathbb{Z}_3^{(5)}$  &  Not Capable & $\mathbb{Z}_9$ \\

$13$ & $\mathbb{Z}_3^{(2)}$ & $\mathbb{Z}_3^{(4)}$ &  $\mathbb{Z}_9 \times \mathbb{Z}_3^{(6)}$ & Capable  & $\{1\}$ \\

$14$ & $\mathbb{Z}_3^{(2)}$ & $\mathbb{Z}_9 \times \mathbb{Z}_3^{(2)}$ &  $\mathbb{Z}_9^{(2)}\times \mathbb{Z}_3^{(4)}$ & Capable  & $\{1\}$ \\

$15$ & $\mathbb{Z}_3^{(2)}$ & $\mathbb{Z}_9 \times \mathbb{Z}_3^{(2)}$  & $\mathbb{Z}_9^{(2)}\times \mathbb{Z}_3^{(4)}$  & Not Capable & $\mathbb{Z}_3$ \\

$16$ & $\mathbb{Z}_3$ & $\mathbb{Z}_3^{(3)}$  & $\mathbb{Z}_9 \times \mathbb{Z}_3^{(5)}$  &  Not Capable & $\mathbb{Z}_9$ \\

$17$ & $\mathbb{Z}_3^{(2)}$ & $\mathbb{Z}_9 \times \mathbb{Z}_3^{(2)}$ & $\mathbb{Z}_9^{(2)}\times \mathbb{Z}_3^{(4)}$  & Not Capable & $\mathbb{Z}_3$ \\

$18$ & $\mathbb{Z}_3$ & $\mathbb{Z}_3^{(3)}$  & $\mathbb{Z}_9 \times \mathbb{Z}_3^{(5)}$  & Not Capable & $\mathbb{Z}_3 \times \mathbb{Z}_3$ \\

$19$ & $\mathbb{Z}_3$ & $\mathbb{Z}_3^{(3)}$ & $\mathbb{Z}_9 \times \mathbb{Z}_3^{(5)}$  &  Not Capable & $\mathbb{Z}_9$ \\

$20$ & $\mathbb{Z}_3$ & $\mathbb{Z}_3^{(3)}$ & $\mathbb{Z}_9 \times \mathbb{Z}_3^{(5)}$  &  Not Capable & $\mathbb{Z}_9$ \\

$21$ & $\mathbb{Z}_3$ & $\mathbb{Z}_9$ &  $\mathbb{Z}_{27} \times \mathbb{Z}_9 \times \mathbb{Z}_3^{(2)}$
  & Not Capable & $\mathbb{Z}_3$\\

$22$ & $\{1\}$ & $\mathbb{Z}_9$ & $\mathbb{Z}_9^{(2)}\times \mathbb{Z}_3^{(2)}$  & Not Capable & $\mathbb{Z}_3$\\

$24$ & $\{1\}$ & $\mathbb{Z}_3$ & $\mathbb{Z}_{27} \times \mathbb{Z}_3^{(3)}$  & Not Capable & $\mathbb{Z}_{27}$ \\

$25$ & $\mathbb{Z}_3$ & $\mathbb{Z}_9 \times \mathbb{Z}_3^{(2)}$ &  $\mathbb{Z}_9 \times \mathbb{Z}_3^{(5)}$ & Not Capable & $\mathbb{Z}_3$\\

$26$ & $\mathbb{Z}_9 \times \mathbb{Z}_3$ & $X$ &  $X \times \mathbb{Z}_3^{(3)}$ & Capable  & $\{1\}$ \\

$27$ & $\mathbb{Z}_3$ & $\mathbb{Z}_9 \times \mathbb{Z}_3^{(2)}$ &  $\mathbb{Z}_9 \times \mathbb{Z}_3^{(5)}$ & Not Capable & $\mathbb{Z}_3$ \\

$28$ & $\mathbb{Z}_9$ & $Y$ & $Y \times \mathbb{Z}_3^{(3)}$  & Capable  & $\{1\}$\\

$29$ & $\mathbb{Z}_3$ & $\mathbb{Z}_9 \times \mathbb{Z}_3^{(2)}$ &  $\mathbb{Z}_9 \times \mathbb{Z}_3^{(5)}$ & Not Capable & $\mathbb{Z}_3$ \\

$30$ & $\mathbb{Z}_3$ & $\mathbb{Z}_9 \times \mathbb{Z}_3^{(2)}$ &  $\mathbb{Z}_9 \times \mathbb{Z}_3^{(5)}$ & Not Capable & $\mathbb{Z}_3$ \\

$32$ & $\mathbb{Z}_3^{(4)}$ & $\mathbb{Z}_3^{(5)}$  & $\mathbb{Z}_9 \times \mathbb{Z}_3^{(10)}$   & Not Capable & $\mathbb{Z}_3$\\

$33$ & $\mathbb{Z}_3^{(3)}$ & $\mathbb{Z}_9 \times \mathbb{Z}_3^{(2)}$  & $\mathbb{Z}_9^{(2)} \times \mathbb{Z}_3^{(7)}$   &  Capable  & $\{1\}$\\

$34$ & $\mathbb{Z}_9 \times \mathbb{Z}_3$ & $\mathbb{Z}_9 \times \mathbb{Z}_3^{(2)}$  &  $\mathbb{Z}_9^{(2)} \times \mathbb{Z}_3^{(7)}$  &  Capable  & $\{1\}$ \\

$35$ & $\mathbb{Z}_3^{(4)}$ & $\mathbb{Z}_3^{(5)}$  & $\mathbb{Z}_9 \times \mathbb{Z}_3^{(10)}$   & Not Capable & $\mathbb{Z}_3$\\

$36$ & $\mathbb{Z}_3^{(2)}$ & $\mathbb{Z}_3^{(3)}$  &  $\mathbb{Z}_9 \times \mathbb{Z}_3^{(8)}$  & Not Capable & $\mathbb{Z}_3 \times \mathbb{Z}_3$\\

$37$ & $\mathbb{Z}_3^{(6)}$ &  $\mathbb{Z}_3^{(8)}$ & $\mathbb{Z}_3^{(14)}$   &  Capable  & $\{1\}$ \\

$38$ & $\mathbb{Z}_3^{(3)}$ & $\mathbb{Z}_3^{(5)}$  & $\mathbb{Z}_3^{(11)}$   &  Not Capable & $\mathbb{Z}_3$\\

$39$ & $\mathbb{Z}_3^{(3)}$ & $\mathbb{Z}_3^{(5)}$ & $\mathbb{Z}_3^{(11)}$   & Not Capable & $\mathbb{Z}_3$\\

$40$ & $\mathbb{Z}_3^{(3)}$ & $\mathbb{Z}_3^{(5)}$  & $\mathbb{Z}_3^{(11)}$   & Not Capable & $\mathbb{Z}_3$\\

$41$ & $\mathbb{Z}_3$ & $\mathbb{Z}_3^{(3)}$ & $\mathbb{Z}_3^{(9)}$  & Not Capable & $\mathbb{Z}_3 \times \mathbb{Z}_3$ \\

$42$ & $\mathbb{Z}_3^{(2)}$ & $\mathbb{Z}_9 \times \mathbb{Z}_3^{(2)}$ & $\mathbb{Z}_9 \times \mathbb{Z}_3^{(8)}$  &  Capable  & $\{1\}$ \\

$43$ & $\mathbb{Z}_9$ & $\mathbb{Z}_9 \times \mathbb{Z}_3^{(2)}$ &  $\mathbb{Z}_9 \times \mathbb{Z}_3^{(8)}$ & Capable  & $\{1\}$ \\

$44$ & $\mathbb{Z}_3$ & $\mathbb{Z}_3^{(3)}$ & $\mathbb{Z}_3^{(9)}$  &  Not Capable & $\mathbb{Z}_3 \times \mathbb{Z}_3$ \\

$45$ & $\mathbb{Z}_9$ & $\mathbb{Z}_9 \times \mathbb{Z}_3^{(2)}$ &  $\mathbb{Z}_9 \times \mathbb{Z}_3^{(8)}$ & Capable  & $\{1\}$ \\
[.5ex] 
 \hline
 \end{tabular}
 \end{table}

\begin{table}[H]
\centering
 \begin{tabular}{||c c c c c c||} 
 \hline
 Group ID & $\M(G)$ & $G \wedge G$ & $G \otimes G$ & Capability & Epicenter \\ 
 [.5ex] 
 \hline\hline
 $46$ & $\mathbb{Z}_3$ & $\mathbb{Z}_3^{(3)}$  & $\mathbb{Z}_3^{(9)}$  &  Not Capable & $\mathbb{Z}_3 \times \mathbb{Z}_3$ \\

$47$ & $\mathbb{Z}_3$ & $\mathbb{Z}_3^{(3)}$ & $\mathbb{Z}_3^{(9)}$  &  Not Capable & $\mathbb{Z}_3 \times \mathbb{Z}_3$ \\

$49$ & $\mathbb{Z}_3^{(2)}$ & $\mathbb{Z}_3^{(3)}$  & $\mathbb{Z}_9 \times \mathbb{Z}_3^{(8)}$   & Not Capable & $\mathbb{Z}_9$\\

$50$ & $\mathbb{Z}_3^{(2)}$ & $\mathbb{Z}_3^{(3)}$  & $\mathbb{Z}_9 \times \mathbb{Z}_3^{(8)}$   & Not Capable & $\mathbb{Z}_9$\\

$51$ & $\mathbb{Z}_3^{(3)}$ & $\mathbb{Z}_3^{(5)}$  & $\mathbb{Z}_3^{(11)}$   &  Not Capable & $\mathbb{Z}_3$\\

$52$ & $\mathbb{Z}_3^{(3)}$ & $\mathbb{Z}_3^{(5)}$  &  $\mathbb{Z}_3^{(11)}$   &  Not Capable & $\mathbb{Z}_3$\\

$53$ & $\mathbb{Z}_3^{(4)}$ & $\mathbb{Z}_9 \times \mathbb{Z}_3^{(4)}$  &  $\mathbb{Z}_9 \times \mathbb{Z}_3^{(10)}$  &  Capable  & $\{1\}$\\

$54$ & $\mathbb{Z}_3^{(3)}$ &  $\mathbb{Z}_3^{(5)}$ &  $\mathbb{Z}_3^{(11)}$   & Not Capable & $\mathbb{Z}_3$\\

$55$ & $\mathbb{Z}_3^{(3)}$ &  $\mathbb{Z}_3^{(5)}$ &  $\mathbb{Z}_3^{(11)}$   &  Not Capable & $\mathbb{Z}_3$\\

$56$ & $\mathbb{Z}_3^{(3)}$ & $\mathbb{Z}_3^{(5)}$  & $\mathbb{Z}_3^{(11)}$    &  Not Capable & $\mathbb{Z}_3$\\

$57$ & $\mathbb{Z}_3^{(3)}$ & $\mathbb{Z}_3^{(5)}$  & $\mathbb{Z}_3^{(11)}$    &  Not Capable & $\mathbb{Z}_3$\\

$58$ & $\mathbb{Z}_3^{(4)}$ & $\mathbb{Z}_9 \times \mathbb{Z}_3^{(4)}$   & $\mathbb{Z}_9 \times \mathbb{Z}_3^{(10)}$   &  Capable  & $\{1\}$\\

$59$ & $\mathbb{Z}_3^{(3)}$ & $\mathbb{Z}_3^{(5)}$  &  $\mathbb{Z}_3^{(11)}$   &  Not Capable & $\mathbb{Z}_3$\\

$60$ & $\mathbb{Z}_3^{(3)}$ & $\mathbb{Z}_3^{(5)}$  &  $\mathbb{Z}_3^{(11)}$   &  Not Capable & $\mathbb{Z}_3$\\

$62$ & $\mathbb{Z}_3^{(7)}$ & $\mathbb{Z}_3^{(8)}$  &  $\mathbb{Z}_3^{(18)}$   &  Capable  & $\{1\}$\\

$63$ & $\mathbb{Z}_3^{(5)}$ & $\mathbb{Z}_3^{(6)}$  & $\mathbb{Z}_3^{(16)}$  &  Not Capable & $\mathbb{Z}_3$\\

$64$ & $\mathbb{Z}_3^{(5)}$ & $\mathbb{Z}_3^{(6)}$  & $\mathbb{Z}_3^{(16)}$ &  Not Capable & $\mathbb{Z}_3$\\

$65$ & $\mathbb{Z}_3^{(5)}$ & $\mathbb{Z}_3^{(6)}$  & $\mathbb{Z}_3^{(16)}$ &  Not Capable & $\mathbb{Z}_3$\\

$66$ & $\mathbb{Z}_3^{(5)}$ & $\mathbb{Z}_3^{(6)}$  & $\mathbb{Z}_3^{(16)}$ &  Not Capable & $\mathbb{Z}_3$\\
[.5ex] 
 \hline
 \end{tabular}
 \hspace{1cm}
\caption{Groups of order $3^5$}\label{Table4}
\end{table}
The groups $X$ and $Y$ in Table \ref{Table4} are given by
$$X=\langle a,b,c \mid [b,a]=c^3,[a,c]=[b,c]=1,a^9=b^9=c^9=1\rangle$$ 
and
$$Y=\langle a,b,c \mid [b,a]=c^3,[a,c]=[b,c]=1,a^9=b^3=c^9=1\rangle.$$

\noindent{\bf Acknowledgement.} The research of the first author is partly supported by Infosys grant. The second author is thankful to Harish-Chandra Research Institute, Allahabad  for offering post doctoral position to him, during which a good part of this  work was done.


\pagebreak
\begin{center}
{Appendix A}
\end{center}

For the convenience of the reader, we include presentations of groups of order $p^n$, $3 \le n \le 5$, given by James \cite{RJ}.
 
\noindent {\bf A1.}  Groups of order $p^3$ 

The isoclinism family $\Phi_2$ consists of the following groups:
\begin{enumerate}[label=(\roman*)]
\item $\Phi_2(21)=\langle \alpha,\alpha_1,\alpha_2 \mid [\alpha_1,\alpha]=\alpha_2, \alpha^p=\alpha_2, \alpha_1^p=\alpha_2^p=1 \rangle$,
\item $\Phi_2(111)=\langle \alpha,\alpha_1,\alpha_2 \mid [\alpha_1,\alpha]=\alpha_2, \alpha^p=\alpha_1^p=\alpha_2^p=1 \rangle$.
\end{enumerate}

\noindent {\bf A2.} Groups of order $p^4$

The isoclinism family $\Phi_2$ consists of the following groups:
\begin{enumerate}[label=(\roman*)]
\item $\Phi_2(211)a=\Phi_2(21)\times \mathbb{Z}_p$,
\item $\Phi_2(1^4)=\Phi_2(111)  \times \mathbb{Z}_p$,
\item $\Phi_2(31) = \langle\alpha,\alpha_1,\alpha_2 \mid [\alpha_1,\alpha]=\alpha^{p^2}=\alpha_2, \alpha_1^p=\alpha_2^p=1 \rangle$,
\item $\Phi_2(22) = \langle \alpha,\alpha_1,\alpha_2 \mid [\alpha_1,\alpha]=\alpha^{p}=\alpha_2, \alpha_1^{p^2}=\alpha_2^p=1 \rangle$,
\item $\Phi_2(211)b = \langle \alpha,\alpha_1,\alpha_2,\gamma \mid [\alpha_1,\alpha]=\gamma^p=\alpha_2, \alpha^p=\alpha_1^p=\alpha_2^p=1 \rangle$,
\item $\Phi_2(211)c = \langle \alpha,\alpha_1,\alpha_2 \mid [\alpha_1,\alpha]=\alpha_2, \alpha^{p^2}=\alpha_1^p=\alpha_2^p=1 \rangle$.
\end{enumerate}

The isoclinism family $\Phi_3$ consists of the following groups:

\begin{enumerate}[label=(\roman*)]
\item $\Phi_3(211)a= \langle \alpha,\alpha_1,\alpha_2, \alpha_3 \mid [\alpha_1,\alpha]=\alpha_2, [\alpha_2,\alpha]=\alpha^p=\alpha_3,\alpha_1^{(p)}=\alpha_2^p=\alpha_3^p=1 \rangle$,

\item $\Phi_3(211)b_r= \langle \alpha,\alpha_1,\alpha_2, \alpha_3 \mid [\alpha_1,\alpha]=\alpha_2, [\alpha_2,\alpha]^r=\alpha_1^{(p)}=\alpha_3^r,\alpha^p=\alpha_2^p=\alpha_3^p=1 \rangle$ for $r=1$ or $\nu$. 

\item $\Phi_3(1^4)= \langle \alpha,\alpha_1,\alpha_2, \alpha_3 \mid [\alpha_i,\alpha]=\alpha_{i+1},\alpha^p=\alpha_i^{(p)}=\alpha_3^p=1, (i=1,2) \rangle$.
\end{enumerate}

\noindent {\bf A3.}  Groups of order $p^5$

The groups of nilpotency class  $2$ falls in the isoclinism families $\Phi_2, \Phi_4$ and $\Phi_5$. 

The isoclinism family $\Phi_2$ consists of the following groups, in which the cyclic direct factor is generated by $\alpha_3$:

\begin{enumerate}[label=(\roman*)]
\item $\Phi_2(311)a =\Phi_2(31) \times \mathbb{Z}_p$,  
\item $\Phi_2(221)a = \Phi_2(22) \times \mathbb{Z}_p$, 
\item $\Phi_2(221)b = \Phi_2(21) \times \mathbb{Z}_{p^2}$, 
\item $\Phi_2(2111)a = \Phi_2(211)a \times \mathbb{Z}_p$, 
\item $\Phi_2(2111)b = \Phi_2(211)b \times \mathbb{Z}_p$, 
\item $\Phi_2(2111)c = \Phi_2(211)c \times \mathbb{Z}_p$, 
\item $\Phi_2(2111)d = \Phi_2(111) \times \mathbb{Z}_{p^2}$, 
\item $\Phi_2(1^5) = \Phi_2(1^4) \times \mathbb{Z}_p$,
\item $\Phi_2\left(41\right)= \langle \alpha,\alpha_1,\alpha_2 \mid [\alpha_1,\alpha]=\alpha^{p^3}=\alpha_2, \alpha_1^p=\alpha_2^p=1 \rangle$, 
\item $\Phi_2\left(32\right)a_1= \langle \alpha,\alpha_1,\alpha_2 \mid [\alpha_1,\alpha]=\alpha^{p^2}=\alpha_2, \alpha_1^{p^2}=\alpha_2^p=1 \rangle$,
\item $\Phi_2\left(32\right)a_2=\langle \alpha, \alpha_1, \alpha_2 \mid [\alpha_1,\alpha]=\alpha_1^{p}=\alpha_2, \alpha^{p^3}=\alpha_2^p=1 \rangle$,
\item $\Phi_2\left(311\right)b= \langle \alpha, \alpha_1, \alpha_2, \gamma \mid [\alpha_1,\alpha]=\gamma^{p^2}=\alpha_2, \alpha^{p}=\alpha_1^p=\alpha_2^p=1 \rangle$, 
\item $\Phi_2\left(311\right)c = \langle \alpha,\alpha_1,\alpha_2 \mid [\alpha_1,\alpha]=\alpha_2, \alpha^{p^3}=\alpha_1^{p}=\alpha_2^p=1 \rangle$,
\item $\Phi_2\left(221\right)c = \langle \alpha, \alpha_1, \alpha_2, \gamma \mid [\alpha_1,\alpha]=\gamma^{p}=\alpha_2, \alpha^{p^2}=\alpha_1^p=\alpha_2^p=1 \rangle$,
\item $\Phi_2\left(221\right)d=\langle \alpha,\alpha_1,\alpha_2 \mid [\alpha_1,\alpha]=\alpha_2, \alpha^{p^2}=\alpha_1^{p^2}=\alpha_2^p=1 \rangle$.
\end{enumerate}

Isoclinism family $\Phi_4$ consists of the following groups:
\begin{enumerate}[label=(\roman*)]
\item $\Phi_4\left(221\right)a= \langle\alpha,\alpha_1,\alpha_2,\beta_1,\beta_2 \mid [\alpha_i,\alpha]=\beta_i, \alpha^p=\beta_2, \alpha_1^p=\beta_1, \alpha_2^p=\beta_i^p=1\; (i=1,2) \rangle$,

\item $\Phi_4\left(221\right)b= \langle\alpha,\alpha_1,\alpha_2,\beta_1,\beta_2 \mid [\alpha_i,\alpha]=\beta_i, \alpha^p=\beta_2, \alpha_2^p=\beta_1, \alpha_1^p=\beta_i^p=1 \;(i=1,2)\rangle$,

\item $\Phi_4\left(221\right)c= \langle\alpha,\alpha_1,\alpha_2,\beta_1,\beta_2 \mid [\alpha_i,\alpha]=\beta_i=\alpha_i^p, \alpha^p=\beta_i^p=1 \;(i=1,2) \rangle$,

\item $\Phi_4\left(221\right)d_r= \langle\alpha,\alpha_1,\alpha_2,\beta_1,\beta_2 \mid [\alpha_i,\alpha]=\beta_i,\alpha_1^p=\beta_1^k,\alpha_2^p=\beta_2, \alpha^p=\beta_i^p=1 \;(i=1,2) \rangle$, where $k=\zeta^r, r=1,2, \ldots ,\frac{1}{2}(p-1)$,

\item $\Phi_4\left(221\right)e= \langle\alpha,\alpha_1,\alpha_2,\beta_1,\beta_2 \mid [\alpha_i,\alpha]=\beta_i,\alpha_1^p=\beta_2^{-1/4}, \alpha_2^p=\beta_1\beta_2,\alpha^p=\beta_i^p=1 \;(i=1,2) \rangle$,

\item $\Phi_4\left(221\right)f_0= \langle\alpha,\alpha_1,\alpha_2,\beta_1,\beta_2 \mid [\alpha_i,\alpha]=\beta_i,\alpha_1^p=\beta_2, \alpha_2^p=\beta_1^{\nu},\alpha^p=\beta_i^p=1 \;(i=1,2) \rangle$,

\item $\Phi_4\left(221\right)f_r= \langle\alpha,\alpha_1,\alpha_2,\beta_1,\beta_2 \mid [\alpha_i,\alpha]=\beta_i,\alpha_1^p=\beta_2^k, \alpha_2^p=\beta_1\beta_2,\alpha^p=\beta_i^p=1 \;(i=1,2) \rangle$, where $4k=\zeta^{2r+1}-1$ for $r=1,2, \ldots , \frac{1}{2}(p-1)$,

\item $\Phi_4\left(2111\right)a= \langle\alpha,\alpha_1,\alpha_2,\beta_1,\beta_2\mid [\alpha_i,\alpha]=\beta_i, \alpha^p=\beta_2,\alpha_i^p=\beta_i^p=1 \;(i=1,2) \rangle$,

\item $\Phi_4\left(2111\right)b= \langle\alpha,\alpha_1,\alpha_2,\beta_1,\beta_2 \mid [\alpha_i,\alpha]=\beta_i, \alpha_1^p=\beta_1, \alpha^p=\alpha_2^p=\beta_i^p=1\; (i=1,2) \rangle$,

\item $\Phi_4\left(2111\right)c= \langle\alpha,\alpha_1,\alpha_2,\beta_1,\beta_2 \mid[\alpha_i,\alpha]=\beta_i, \alpha_2^p=\beta_1, \alpha^p=\alpha_1^p=\beta_i^p=1 \;(i=1,2) \rangle$,

\item $\Phi_4\left(1^5\right)= \langle\alpha,\alpha_1,\alpha_2,\beta_1,\beta_2 \mid [\alpha_i,\alpha]=\beta_i, \alpha^p=\alpha_i^p=\beta_i^p=1 \;(i=1,2) \rangle$.
\end{enumerate}

The isoclinism family $\Phi_5$ consists of the following two groups: 

\begin{enumerate}[label=(\roman*)]
\item $\Phi_5(2111)=\langle\alpha_i,\alpha_2,\alpha_3, \alpha_4, \beta \mid [\alpha_1,\alpha_2]=[\alpha_3,\alpha_4]=\alpha_1^{p}=\beta, \alpha_2^p=\alpha_3^p=\alpha_4^p=\beta^p=1 \rangle$,
\item $\Phi_5(1^5)=\langle\alpha_1,\alpha_2,\alpha_3, \alpha_4, \beta \mid [\alpha_1,\alpha_2]=[\alpha_3,\alpha_4]=\beta, \alpha_1^p=\alpha_2^p=\alpha_3^p=\alpha_4^p=\beta^p=1 \rangle$.
\end{enumerate}

The groups of nilpotency class $3$ fall in the isoclinism families $\Phi_3, \Phi_6, \Phi_7$ and $\Phi_8$.

The class $\Phi_3$ consists of the following groups:
\begin{enumerate}[label=(\roman*)]
\item $\Phi_3(2111)a = \Phi_3(211)a \times \mathbb{Z}_p$, 

\item $\Phi_3(2111)b_r = \Phi_3(211)b_r \times \mathbb{Z}_p$ for $r=1$ or $\nu$, 

\item $\Phi_3(1^5) = \Phi_3(1^4) \times \mathbb{Z}_p$, 
 
\item $\Phi_3\left(311\right)a=\langle\alpha,\alpha_1,\alpha_2,\alpha_3\mid[\alpha_1,\alpha]=\alpha_2,[\alpha_2,\alpha]=\alpha^{p^2}=\alpha_3,\alpha_1^{(p)}=\alpha_2^p=\alpha_3^p=1\rangle$,

\item $\Phi_3\left(311\right)b_r=\langle\alpha,\alpha_1,\alpha_2,\alpha_3\mid [\alpha_1,\alpha]=\alpha_2,[\alpha_2,\alpha]^r=\alpha_1^{p^2}=\alpha_3, \alpha^p=\alpha_2^p=\alpha_3^p=1\rangle$ for $r=1$ or $\nu$,

\item $\Phi_3\left(221\right)a=\langle\alpha,\alpha_1,\alpha_2,\alpha_3\mid [\alpha_1,\alpha]=\alpha_2,[\alpha_2,\alpha]=\alpha^p=\alpha_3, \alpha_1^{p^2}=\alpha_2^p=\alpha_3^p=1\rangle$,

\item $\Phi_3\left(221\right)b_r=\langle\alpha,\alpha_1,\alpha_2,\alpha_3\mid [\alpha_1,\alpha]=\alpha_2,[\alpha_2,\alpha]^r=\alpha_1^{(p)}=\alpha_3^r, \alpha^{p^2}=\alpha_2^p=\alpha_3^p=1\rangle$ for $r=1$ or $\nu$,

\item $\Phi_3\left(2111\right)c=\langle\alpha,\alpha_1,\alpha_2,\alpha_3,\gamma\mid [\alpha_1,\alpha]=\alpha_2,[\alpha_2,\alpha]=\gamma^p=\alpha_3,\alpha^p=\alpha_i^{(p)}=1  \;(i=1,2,3)\rangle$,

\item $\Phi_3\left(2111\right)d=\langle\alpha,\alpha_1,\alpha_2,\alpha_3\mid [\alpha_i,\alpha]=\alpha_{i+1},\alpha^{p^2}=\alpha_i^{(p)}=\alpha_3^p=1  \;(i=1,2)\rangle$,

\item $\Phi_3\left(2111\right)e=\langle\alpha,\alpha_1,\alpha_2,\alpha_3\mid [\alpha_i,\alpha]=\alpha_{i+1},\alpha^p=\alpha_1^{p^2}=\alpha_{i+1}^p=1 \;(i=1,2)\rangle$.
\end{enumerate}

The class $\Phi_6$ consists of the following groups:
\begin{enumerate}[label=(\roman*)]
\item $\Phi_6(221)a=\langle \alpha_1,\alpha_2,\beta,\beta_1,\beta_2\mid [\alpha_1,\alpha_2]=\beta,[\beta,\alpha_i]=\beta_i=\alpha_i^p,\beta^p=\beta_i^p=1 \; (i=1,2)\rangle$,

\item $\Phi_6(221)b_r=\langle \alpha_1,\alpha_2,\beta,\beta_1,\beta_2\mid [\alpha_1,\alpha_2]=\beta,[\beta,\alpha_i]=\beta_i,\alpha_1^p=\beta_1^k,\alpha_2^p=\beta_2,\beta^p=\beta_i^p=1 \; (i=1,2)\rangle$, where $k= \zeta^r, r=1,2, \ldots , \frac{1}{2}(p-1)$,

\item $\Phi_6(221)c_r=\langle \alpha_1,\alpha_2,\beta,\beta_1,\beta_2\mid [\alpha_1,\alpha_2]=\beta,[\beta,\alpha_i]=\beta_i,\alpha_1^p=\beta_2^{-\frac{1}{4}r},\alpha_2^p=\beta_1^r\beta_2^r,\beta^p=\beta_i^p=1\;(i=1,2)\rangle$, where $r=1$ or $\nu$,

\item $\Phi_6(221)d_0=\langle\alpha_1,\alpha_2,\beta,\beta_1,\beta_2\mid [\alpha_1,\alpha_2]=\beta,[\beta,\alpha_i]=\beta_i,\alpha_1^p=\beta_2,\alpha_2^p=\beta_1^{\nu},\beta^p=\beta_i^p=1\;(i=1,2)\rangle$,

\item $\Phi_6(221)d_r=\langle\alpha_1,\alpha_2,\beta,\beta_1,\beta_2\mid [\alpha_1,\alpha_2]=\beta,[\beta,\alpha_i]=\beta_i,\alpha_1^p=\beta_2^k,\alpha_2^p=\beta_1\beta_2,\beta^p=\beta_i^p=1\; (i=1,2)\rangle$ where $4k=\zeta^{2r+1}-1, r=1,2, \ldots, \frac{1}{2}(p-1)$,

\item $\Phi_6(2111)a=\langle \alpha_1,\alpha_2,\beta,\beta_1,\beta_2\mid [\alpha_1,\alpha_2]=\beta,[\beta,\alpha_i]=\beta_i, \alpha_1^p=\beta_1,\alpha_2^p=\beta^p=\beta_i^p=1 \; (i=1,2)\rangle$ for $p > 3$,

\item $\Phi_6(2111)b_r=\langle \alpha_1,\alpha_2,\beta,\beta_1,\beta_2\mid [\alpha_1,\alpha_2]=\beta,[\beta,\alpha_i]=\beta_i, \alpha_2^p=\beta_1^r,\alpha_1^p=\beta^p=\beta_i^p=1 \; (i=1,2)\rangle$ for $r=1$ or $\nu$ and $p >3$,

\item $\Phi_6(1^5)=\langle \alpha_1,\alpha_2,\beta,\beta_1,\beta_2\mid [\alpha_1,\alpha_2]=\beta,[\beta,\alpha_i]=\beta_i,\alpha_i^p=\beta^p=\beta_i^p=1\;(i=1,2)\rangle$.
\end{enumerate}

The class $\Phi_7$ consists of the following groups:

\begin{enumerate}[label=(\roman*)]
\item $\Phi_7\left(2111\right)a=\langle\alpha,\alpha_1,\alpha_2,\alpha_3,\beta \mid [\alpha_i,\alpha]=\alpha_{i+1},[\alpha_1,\beta]=\alpha_3=\alpha^p, \alpha_1^{(p)}=\alpha_{i+1}^p=\beta^p=1 ~(i=1,2) \rangle$,

\item $\Phi_7\left(2111\right)b_r=\langle \alpha,\alpha_1,\alpha_2,\alpha_3,\beta \mid [\alpha_i,\alpha]=\alpha_{i+1},[\alpha_1,\beta]^r=\alpha_3^r=\alpha_1^{(p)}, \alpha^{p}=\alpha_{i+1}^p=\beta^p=1~ (i=1,2) \rangle $ for $r=1$ or $\nu$,

\item $\Phi_7\left(2111\right)c=\langle \alpha,\alpha_1,\alpha_2,\alpha_3,\beta \mid [\alpha_i,\alpha]=\alpha_{i+1},[\alpha_1,\beta]=\alpha_3=\beta^p, \alpha^p=\alpha_1^{(p)}=\alpha_{i+1}^p=1 ~(i=1,2) \rangle$,

\item $\Phi_7\left(1^5\right)=\langle \alpha,\alpha_1,\alpha_2,\alpha_3,\beta \mid [\alpha_i,\alpha]=\alpha_{i+1},[\alpha_1,\beta]=\alpha_3, \alpha^p=\alpha_1^{(p)}=\alpha_{i+1}^p=\beta^p=1 ~(i=1,2)\rangle$.
\end{enumerate}

The class $\Phi_8$ consists of only one group 

\begin{enumerate}[label=(\roman*)]
\item $\Phi_8(32)=\langle\alpha_1, \alpha_2, \beta \mid [\alpha_1,\alpha_2]=\beta=\alpha_1^p, {\beta}^{p^2}=\alpha_2^{p^2}=1\rangle$\\
\end{enumerate}

Groups of nilpotency class $4$ fall in the  isoclinism classes $\Phi_9$ and $\Phi_{10}$.  

The class $\Phi_9$ consists of the following groups:

\begin{enumerate}[label=(\roman*)]
\item $\Phi_9\left(2111\right)a=\langle \alpha,\alpha_1,\alpha_2,\alpha_3,\alpha_4\mid [\alpha_i,\alpha]=\alpha_{i+1},\alpha_4=\alpha^p, \alpha_1^{(p)}=\alpha_{i+1}^{(p)}=1 ~(i=1,2,3)\rangle$,

\item $\Phi_9\left(2111\right)b_r=\langle \alpha,\alpha_1,\alpha_2,\alpha_3,\alpha_4\mid [\alpha_i,\alpha]=\alpha_{i+1},\alpha_4^k=\alpha_1^{(p)}, \alpha^p=\alpha_{i+1}^{(p)}=1 \;(i=1,2,3)\rangle$, where $k=\zeta^r$ for $r+1=1,2,\ldots,(p-1,3)$

\item $\Phi_9\left(1^5\right)=\langle \alpha,\alpha_1,\alpha_2,\alpha_3,\alpha_4\mid [\alpha_i,\alpha]=\alpha_{i+1},\alpha^p=\alpha_1^{(p)}=\alpha_{i+1}^{(p)}=1 ~(i=1,2,3)\rangle$.\\
\end{enumerate}

The class $\Phi_{10}$ consists of the following groups:
\begin{enumerate}[label=(\roman*)]
\item $\Phi_{10}\left(2111\right)a_r=\langle \alpha,\alpha_1,\alpha_2,\alpha_3,\alpha_4\mid [\alpha_i,\alpha]=\alpha_{i+1},[\alpha_1,\alpha_2]^k=\alpha_4^k=\alpha^p, \alpha_1^{(p)}=\alpha_{i+1}^{(p)}=1~ (i=1,2,3)\rangle$, where $k=\zeta^r$ for $r+1=1,2,\ldots,(p-1,4)$

\item $\Phi_{10}\left(2111\right)b_r=\langle \alpha,\alpha_1,\alpha_2,\alpha_3,\alpha_4 \mid [\alpha_i,\alpha]=\alpha_{i+1},[\alpha_1,\alpha_2]^k=\alpha_4^k=\alpha_1^{(p)}, \alpha^p=\alpha_{i+1}^{(p)}=1 ~(i=1,2,3)\rangle$,  where $k=\zeta^r$ for $r+1=1,2,\ldots,(p-1,3)$ and  $p > 3$,

\item  $\Phi_{10}\left(1^5\right)=\langle \alpha,\alpha_1,\alpha_2,\alpha_3,\alpha_4\mid [\alpha_i,\alpha]=\alpha_{i+1},[\alpha_1,\alpha_2]=\alpha_4,\alpha^p=\alpha_1^{(p)}=\alpha_{i+1}^{(p)}=1~ (i=1,2,3)\rangle$.
\end{enumerate}
\end{document}